\definecolor{tocolor}{rgb}{.1,.1,.1}
\definecolor{urlcolor}{rgb}{.2,.2,.6}
\definecolor{linkcolor}{rgb}{.1,.1,.5}
\definecolor{citecolor}{rgb}{.4,.2,.1}
\newcommandx{\thdef}[2]{
	\newaliascnt{#1}{theorem}  
	\newtheorem{#1}[#1]{#2}
	\aliascntresetthe{#1}  
	\newtheorem*{#1*}{#2}
	\expandafter\newcommand\expandafter{\csname #1autorefname\endcsname}{#2}
}
\newtheorem*{rep@theorem}{\rep@title}
\newcommand{\newreptheorem}[2]{%
\newenvironment{rep#1}[1]{%
 \def\rep@title{#2 \ref{##1}}%
 \begin{rep@theorem}}%
 {\end{rep@theorem}}}
\newtheorem{theorem}{Theorem}[section]
\theoremstyle{definition}
\theoremstyle{remark}
\theoremstyle{remark}
\newenvironment{example}
{\begin{ex}}%
{\hfill $\blacksquare$\end{ex}}
\newcommand{\spc}[1]{\mathsf{#1}} 
\newcommand{\shf}[1]{\mathcal{#1}} 
\newcommand{\rbrac}[1]{\left(#1\right)} 
\newcommandx{\fn}[2][2=]{#1\ifthenelse{\equal{#2}{}}{}{\!\rbrac{{#2}}}} 
\newcommandx{\id}[2][2=]{\fn{{\rm id}_{#1}}[#2]} 
\newcommand{\ext}[2][\bullet]{\spc{\Lambda}^{#1}{#2}} 
\newcommandx{\End}[2][1=]{\fn{\spc{End}_{#1}}[#2]} 
\newcommandx{\Hom}[2][1=]{\fn{\spc{Hom}_{#1}}[#2]} 
\newcommandx{\Aut}[2][1=]{\fn{\spc{Aut}_{#1}}[#2]} 
\newcommandx{\image}[1]{\fn{\spc{img}}[#1]} 
\renewcommandx{\ker}[1]{\fn{\spc{ker}}[#1]} 
\newcommandx{\rank}[1]{\fn{\mathrm{rank}}[#1]} 
\newcommandx{\ann}[1]{\fn{\spc{ann}}[#1]} 
\newcommandx{\hlgy}[3][1=\bullet,3=]{\spc{H}_{#1}^{#3}\!\rbrac{{#2}}} 
\newcommandx{\cohlgy}[3][1=\bullet,3=]{\spc{H}^{#1}_{#3}\!\rbrac{{#2}}} 
\newcommandx{\chow}[3][1=\bullet,3=]{\spc{A}^{#1}_{#3}\!\rbrac{{#2}}} 
\newcommandx{\Ext}[3][1=\bullet,3=]{\fn{\spc{Ext}^{#1}_{#3}}[{#2}]} 
\newcommandx{\Tor}[3][1=\bullet,3=]{\fn{\spc{Tor}^{#1}_{#3}}[{#2}]} 
\newcommandx{\Pic}[1]{\fn{\spc{Pic}}[{#1}]} 
\newcommandx{\chernalg}[2][1=\bullet]{\fn{\spc{Chern}^{#1}}[{#2}]} 
\newcommandx{\chern}[2][1=]{\fn{c_{#1}}[#2]} 
\newcommandx{\ch}[2][1=]{\fn{\mathrm{ch}_{#1}}[{#2}]} 
\newcommandx{\sKer}[2][1=]{ \fn{ \shf{K}er_{#1}}[{#2}] } 
\newcommandx{\sHom}[2][1=]{ \fn{ \shf{H}om_{#1}}[{#2}] } 
\newcommandx{\sEnd}[2][1=]{ \fn{ \shf{E}nd_{#1}}[{#2}] } 
\newcommandx{\sExt}[3][1=\bullet,3=]{\fn{\shf{E}xt^{#1}_{#3}}[{#2}]} 
\newcommandx{\sTor}[3][1=\bullet,3=]{\fn{\shf{T}or^{#1}_{#3}}[{#2}]} 
\newcommandx{\forms}[2][1=\bullet]{\Omega^{#1}_{#2}} 
\newcommandx{\can}[1][1=]{\omega_{#1}} 
\newcommandx{\acan}[1][1=]{\omega_{#1}^{-1}} 
\newcommandx{\tshf}[1]{\shf{T}_{#1}} 
\newcommandx{\mvect}[2][1=\bullet]{ \ext[#1]{\tshf{#2}} }
\newcommandx{\der}[2][1=\bullet]{\mathscr{X}^{#1}_{#2}} 
\newcommandx{\sJet}[3][1=,2=]{\shf{J}^{#1}_{#2}#3} 
\newcommandx{\tb}[2][1=]{\spc{T}_{\!#1}{#2}} 
\newcommandx{\ctb}[2][1=]{\spc{T}_{\!#1}^*{#2}} 
\newcommandx{\lie}[2][2=]{\fn{\mathscr{L}_{#1}}[#2]} 
\newcommandx{\hook}[2][2=]{\fn{i_{#1}}[#2]} 
\newcommand{\thickbar}{\mathpalette\@thickbar}
\newcommand{\@thickbar}[2]{{#1\mkern1.5mu\vbox{
  \sbox\z@{$#1\mkern-1mu#2\mkern-1mu$}%
  \sbox\tw@{$#1\overline{#2}$}%
  \dimen@=\dimexpr\ht\tw@-\ht\z@-.6\p@\relax
  \hrule\@height.4\p@ 
  \vskip1\p@
  \hrule\@height.4\p@ 
  \vskip\dimen@
  \box\z@}\mkern1.5mu}
}
\def\ker{\text{ker}}
\def\im{\text{im}}
\def\End{\text{End}}
\def\log{\text{log}}
\renewcommand{\cal}[1]{\mathcal{#1}}
\numberwithin{equation}{section}
\newtheoremstyle{parag}
  {\topsep}   
  {\topsep}   
  {}  
  {}       
  {\bfseries} 
  {.}         
  { } 
  {}          
\theoremstyle{parag}
\def\@cite#1#2{{\normalfont[{#1\if@tempswa , #2\fi}]}}
\renewcommand{\Pic}{\mathrm{Pic}}
\def\Aut{\text{Aut}}
\begin{document}

\title{\vspace{-4em} \huge The derived moduli stack of logarithmic flat connections}

\date{}

\author{
Francis Bischoff\thanks{Exeter College and Mathematical Institute, University of Oxford; {\tt francis.bischoff@maths.ox.ac.uk }}
}
\maketitle
\abstract{We give an explicit finite-dimensional model for the derived moduli stack of flat connections on $\mathbb{C}^k$ with logarithmic singularities along a weighted homogeneous Saito free divisor. We investigate in detail the case of plane curves of the form $x^p = y^q$ and relate the moduli spaces to the Grothendieck-Springer resolution. We also discuss the shifted Poisson geometry of these moduli spaces. Namely, we conjecture that the map restricting a logarithmic connection to the complement of the divisor admits a shifted coisotropic structure and we construct a shifted Poisson structure on the formal neighbourhood of a canonical connection in the case of plane curves $x^p = y^q$. }

\tableofcontents

\section{Introduction}
Let $D \subset \mathbb{C}^{k}$ be a hypersurface cut out by a reduced holomorphic function $f$. In \cite{saito1980theory} Saito considers the subsheaf, usually denoted $T_{\mathbb{C}^k}(-\log D)$, of holomorphic vector fields on $\mathbb{C}^{k}$ which preserve the ideal generated by $f$. In general, it is coherent and closed under the Lie bracket, but may fail to be locally free. In fact, Saito provides a very explicit criterion for determining whether the sheaf is locally free. When it is, $D$ is said to be a free divisor and $T_{\mathbb{C}^k}(-\log D)$, known as the logarithmic tangent bundle, defines a Lie algebroid. Examples of free divisors include smooth hypersurfaces, plane curves and simple normal crossings. In general, $D$ may be highly singular. 

Let $G$ be a connected complex reductive group with Lie algebra $\mathfrak{g}$ and assume that $D$ is a free divisor which is homogeneous under a given $\mathbb{C}^*$-action on $\mathbb{C}^k$ with the property that all its weights are strictly positive. In this paper, we are interested in studying the moduli space of $T_{\mathbb{C}^k}(-\log D)$-representations on principal $G$-bundles, also known as \emph{logarithmic flat connections}. There is a standard way of defining this moduli space as the Maurer-Cartan locus of an infinite-dimensional differential graded Lie algebra (dgla) $L_{D, \mathfrak{g}}$ which is associated to $D$ and $\mathfrak{g}$. Let $\Omega^{1}_{\mathbb{C}^{k}}(\log D)$ denote the logarithmic cotangent bundle, which is the dual to $T_{\mathbb{C}^k}(-\log D)$, and let $\Omega^{\bullet}_{\mathbb{C}^{k}}(\log D) = \wedge^{\bullet} \Omega^{1}_{\mathbb{C}^{k}}(\log D)$ be the exterior algebra. This defines a commutative differential graded algebra when equipped with the Lie algebroid differential $d$. Then $L_{D,\mathfrak{g}} = \Omega^{\bullet}_{\mathbb{C}^{k}}(\log D) \otimes \mathfrak{g}$ inherits the structure of a dgla. The Maurer-Cartan locus of this dgla is defined to be the following set
\[
MC(L_{D, \mathfrak{g}}) = \{ \omega \in L^{1}_{D,\mathfrak{g}} \ | \ d\omega + \frac{1}{2}[\omega, \omega] = 0 \}. 
\]
Here, $\omega \in \Omega^{1}_{\mathbb{C}^{k}}(\log D) \otimes \mathfrak{g}$ is a Lie algebra valued $1$-form, and it defines the following connection $\nabla = d + \omega$, which has a logarithmic singularity along $D$. It's curvature is given by the following expression
\[
F(\omega) = d\omega + \frac{1}{2}[\omega, \omega]. 
\]
The degree $0$ component of the dgla is $L^{0}_{D,\mathfrak{g}} = Map(\mathbb{C}^{k}, \mathfrak{g})$, which is the Lie algebra of the infinite dimensional gauge group $\mathfrak{G} = Map(\mathbb{C}^{k}, G)$. This group acts on the Maurer-Cartan locus, giving the correct equivalence between flat connections. As a result, the moduli space of flat logarithmic connections is defined to be the stack quotient 
\[
[MC(L_{D, \mathfrak{g}}) /\mathfrak{G} ]. 
\]
Although this construction involves infinite dimensional spaces, in \cite{bischoff2022normal} we provide a purely finite dimensional model. More precisely, we show that the category of logarithmic flat connections with fixed residue data is equivalent to the stack quotient of an affine algebraic variety by the action of an algebraic group. 

The purpose of the present paper is to provide a derived enhancement of the moduli stack. There are several different approaches to derived geometry in the literature, such as \cite{MR1801413, MR1839580, lurie2018spectral, MR2394633, MR3073928}. In this paper, we have opted to go with the notion of bundles of curved dgla's, which requires relatively little technology and is sufficient for our purposes. Let us recall the definition from \cite{MR3248985}. 

\begin{definition}
A \emph{bundle of curved differential graded Lie algebras} over a variety $M$ consists of a graded vector bundle $\mathcal{L}^{\bullet}$ starting in degree $2$, which is equipped with the following data 
\begin{enumerate}
\item a section $F \in \Gamma(M, \mathcal{L}^{2})$, 
\item a degree $1$ bundle map $\delta : \mathcal{L}^{\bullet} \to \mathcal{L}^{\bullet}[1]$ 
\item a smoothly varying graded Lie bracket $[ - , - ]$ on the fibres of $\mathcal{L}^{\bullet}$,
\end{enumerate}
satisfying the following conditions
\begin{enumerate}
\item the Bianchi identity $\delta{F} = 0$, 
\item $\delta^2 = [ F, - ]$, 
\item $\delta$ is a graded derivation of the bracket $[ - , - ]$. 
\end{enumerate}
If $[M/\mathcal{G}]$ is a stack, defined by the data of a Lie groupoid $\mathcal{G}$ over $M$, then we define a bundle of curved dgla's over the stack to be such a bundle over $M$, which is equipped with an equivariant action of $\mathcal{G}$ preserving the data $(F, \delta, [ - , - ])$. We will use the respective terminology of \emph{derived manifolds} and \emph{derived stacks} to refer to this data. 
\end{definition}

There is a standard way of constructing a derived manifold from the data of a dgla, and we can apply it to the case of $L_{D, \mathfrak{g}}$. Namely, we take the base to be $M = L_{D, \mathfrak{g}}^{1}$ and take the bundle  $\mathcal{L}^{\bullet}$ to be trivial with fibre given by the truncation $ L_{D, \mathfrak{g}}^{\bullet \geq 2}$. The section is given by the curvature $F$, and the bracket is simply the constant one inherited from $L_{D, \mathfrak{g}}$. The bundle map $\delta$ varies over $M$, and above a point $\omega \in L_{D, \mathfrak{g}}^{1}$, it is given by the twisted differential $\delta_{\omega} = d + [ \omega, - ]$. Let us denote the resulting derived manifold $\mathcal{M}_{D, \mathfrak{g}}$. It can be further upgraded to a derived stack by noting that the gauge group $\mathfrak{G}$ lifts to an action on $ L_{D, \mathfrak{g}}^{\bullet \geq 2}$ via the adjoint representation. 

A derived manifold (or stack) has an underlying classical truncation $\pi_{0}(\mathcal{M})$, defined as the vanishing locus of the section $F$. In the example under consideration, the classical truncation is given by the Maurer-Cartan locus, and hence 
\[
\pi_{0}([\mathcal{M}_{D, \mathfrak{g}}/\mathfrak{G}]) = [MC(L_{D, \mathfrak{g}})/\mathfrak{G}]. 
\]
For this reason, we say that $[\mathcal{M}_{D, \mathfrak{g}}/\mathfrak{G}]$ is the derived moduli stack of flat logarithmic $G$-connections. The main result of this paper is a finite-dimensional model of this derived stack. 

Here is a brief description of this result. Given an element $A \in \mathfrak{g}$, we consider the infinite dimensional derived moduli stack $[\mathcal{M}_{D, \mathfrak{g}}(A)/\mathfrak{G}]$ of $G$-connections whose `residue' is conjugate to $A$. Details of this are given in Section \ref{hfdandlog}. Let $A = S + N_{0}$ be the Jordan decomposition, where $S$ is semisimple and $N_{0}$ is nilpotent. In Section \ref{finitedimmodel} we construct a finite dimensional dgla $(U_{0}, \delta_{S})$ associated to $S$, with corresponding derived stack $[\mathcal{U}_{S}/Aut(S)]$. This is interpreted as a certain sub-moduli space of flat connections on the fibre $f^{-1}(1)$. Then, associated to the element $A$, we construct a derived substack $[\mathcal{W}(A)/Aut(S)]$ of the shifted tangent bundle $T[-1] [\mathcal{U}_{S}/Aut(S)]$. The main result is Theorem \ref{Maintheorem}, which states that there is an equivalence of derived stacks
\[
q: [\mathcal{W}(A)/Aut(S)] \to [\mathcal{M}_{D, \mathfrak{g}}(A)/\mathfrak{G}].
\] 
By this we mean that $q$ induces an equivalence between the groupoids of solutions to the MC equation, and given any solution $w$, the derivative $dq_{w}$ is a quasi-isomorphism of tangent complexes. 

In Section \ref{planecurvesection} we turn to the case of a plane curve defined by the function $f = x^{p} - y^{q}$. This is the simplest case above $k = 1$, and already it exhibits interesting behaviour. We construct an explicit derived stack $[\mathcal{Q}(A)/P_{S}]$ from the data of a parabolic subgroup $P_{S}$ of the centralizer of $exp(\frac{2\pi i}{pq}S)$ and a representation $H^{1}(U_{0})$. In Theorem \ref{parabolicderivedstack} we show that, given a condition on the eigenvalues of $ad_{S}$, the derived stack $[\mathcal{Q}(A)/P_{S}]$ is equivalent to $[\mathcal{W}(A)/Aut(S)]$. For general $S$, the moduli space can have extra components and we illustrate this in Example \ref{Extracomponent}. The derived stack $[\mathcal{Q}(A)/P_{S}]$ can be interpreted in terms of spaces showing up in geometric representation theory, such as the Grothendieck-Springer resolution. Hence Theorem \ref{parabolicderivedstack} can be viewed as a higher dimensional generalization of Boalch's description in \cite{boalch2011riemann} of the moduli space of logarithmic connections on the disc. 

\subsection*{Speculations about Poisson geometry}
Going back to the work of Atiyah-Bott \cite{MR702806} and Goldman \cite{MR762512}, we know that the moduli space of flat connections on a closed Riemann surface admits a symplectic structure. If the surface is punctured, then the moduli space admits a Poisson structure, whose symplectic leaves are obtained by fixing boundary conditions at the punctures \cite{MR1730456}. This picture has since been generalized in several directions, including to the case of flat connections with singularities \cite{MR1864833, MR1904670, MR2352135}. More recently, the moduli space of local systems on higher dimensional manifolds has been studied using tools from derived algebraic geometry. For a compact oriented manifold $M$ of dimension $d$, the moduli space of local systems $Loc_{G}(M)$ is a derived stack equipped with a shifted symplectic structure of degree $2-d$ \cite{MR3090262}. If $M$ has a boundary $\partial M = N$, then $Loc_{G}(N)$ has a $3-d$-shifted symplectic structure, and the restriction map $r: Loc_{G}(M) \to Loc_{G}(N)$ has a Lagrangian structure \cite{MR3381468}, inducing on $Loc_{G}(M)$ a $2-d$-shifted Poisson structure \cite{MR3848017}. We wish to generalize this picture to the case of logarithmic flat connections in higher dimensions. 

In the above setting of a map $f: \mathbb{C}^k \to \mathbb{C}$, the inverse image of the unit circle $f^{-1}(S^{1})$ is a manifold of dimension $2k - 1$, usually with boundary, and so $Loc_{G}(f^{-1}(S^{1}))$ has a Poisson structure of degree $3 - 2k $. Given a logarithmic flat connection, we can restrict it to $f^{-1}(S^{1})$ and take its holonomy. This should define a map 
\begin{equation} \label{restrictionmap}
 r: [\mathcal{W}(A)/Aut(S)] \to Loc_{G}(f^{-1}(S^{1})). 
\end{equation}
This map was studied by Boalch \cite{boalch2011riemann} in the special case of $k = 1$, where $f^{-1}(S^{1}) = S^{1}$. In this case $Loc_{G}(S^{1}) = G/G$ has a $1$-shifted symplectic structure, and the work of Boalch (suitably interpreted by \cite{MR3422691}) shows that $r$ has a Lagrangian structure. In higher dimensions we conjecture that the map can be equipped with a shifted coisotropic structure in the sense of \cite{MR3848016, MR3848017}. 
\begin{conjecture}\label{conjecture}
The map $r$ can be naturally equipped with a coisotropic structure. 
\end{conjecture}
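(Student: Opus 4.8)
The plan is to lift the equivalence of Theorem \ref{Maintheorem} to the level of shifted symplectic geometry and then to factor $r$ through maps each of which already carries a Lagrangian or coisotropic structure. The first step is to replace both sides of $r$ by mapping-stack presentations. Since the $\mathbb{C}^*$-action has strictly positive weights, $f$ defines a global weighted-homogeneous Milnor fibration: $f: \mathbb{C}^k \minus D \to \mathbb{C}^*$ is locally trivial with fibre the Milnor fibre $F = f^{-1}(1)$, and its conical structure identifies the homotopy types of $f^{-1}(S^1)$, of $f^{-1}(\overline{\mathbb{D}}) \minus D$, and of $\mathbb{C}^k \minus D$ all with a compact model, namely the mapping torus of the geometric monodromy acting on $F$, which is a compact oriented $(2k-1)$-manifold with boundary. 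Hence $Loc_{G}(f^{-1}(S^1)) \simeq Loc_{G}(\mathbb{C}^k \minus D)$. On the source side, Theorem \ref{Maintheorem} together with the constructions of Section \ref{hfdandlog} identifies $[\mathcal{W}(A)/Aut(S)]$ with (a union of components of) the derived moduli of flat $T_{\mathbb{C}^k}(-\log D)$-connections on a principal $G$-bundle whose residue is conjugate to $A$; via the Kato--Nakayama space of the log structure on $(\mathbb{C}^k, D)$ --- the real oriented blow-up of $\mathbb{C}^k$ along $D$ when $D$ is smooth or normal crossings --- this is a moduli of flat $G$-connections on a manifold with boundary, constrained along the exceptional boundary by the residue data. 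Finally, the logarithmic comparison theorem, valid here because $D$ is a weighted-homogeneous free divisor, identifies the $T_{\mathbb{C}^k}(-\log D)$-de Rham complex with the de Rham complex of $\mathbb{C}^k \minus D$, so that under all of these identifications $r$ becomes the map that restricts a logarithmic connection to the divisor complement and records its holonomy.

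The second step, which is the heart of the argument, is to produce the required orientation data. The log canonical bundle $\Omega^{k}_{\mathbb{C}^k}(\log D) = \wedge^{k} \Omega^{1}_{\mathbb{C}^k}(\log D)$ is canonically trivialised by the Saito form $\tfrac{dx_1 \wedge \cdots \wedge dx_k}{f}$, so $T_{\mathbb{C}^k}(-\log D)$ carries a distinguished volume form; combined with the complex orientation of $\mathbb{C}^k$ this should equip the pair $\big(\Omega^{\bullet}_{\mathbb{C}^k}(\log D) \otimes \mathfrak{g}\,,\ \Omega^{\bullet}_{f^{-1}(S^1)} \otimes \mathfrak{g}\big)$, with their Killing pairings, with a relative cyclic structure of cohomological degree $-k$ --- a relative Poincar\'e--Lefschetz duality whose content is that $\int_{\mathbb{C}^k} \tfrac{dx_1 \wedge \cdots \wedge dx_k}{f} \wedge (-)$ is only defined modulo boundary terms supported along $f^{-1}(S^1)$. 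By the general correspondence between (relative) orientations and (co)isotropic structures on mapping stacks \cite{MR3090262, MR3381468, MR3848016, MR3848017}, such data simultaneously produces the $(3-2k)$-shifted symplectic structure on $Loc_{G}(f^{-1}(S^1))$ --- agreeing with the one seen by regarding $f^{-1}(S^1)$ as a boundary component of the Kato--Nakayama space of $f^{-1}(\overline{\mathbb{D}})$ --- and a coisotropic structure on $r$. Concretely I would first establish the duality $\Omega^{\bullet}_{\mathbb{C}^k}(\log D) \simeq \big(\Omega^{\bullet}_{\mathbb{C}^k}(\log D)\big)^{\vee}[-k]$ relative to $f^{-1}(S^1)$, deducing it from ordinary Poincar\'e--Lefschetz duality on $\mathbb{C}^k \minus D$ and the log comparison theorem; then transport the resulting cyclic structure through the quasi-isomorphisms underlying Theorem \ref{Maintheorem}; and finally check that it descends to the substack $[\mathcal{W}(A)/Aut(S)] \subset T[-1][\mathcal{U}_S/Aut(S)]$, using that fixing the residue conjugacy class cuts out a Lagrangian --- exactly as when $k=1$, where $Loc_{G}(S^1) = [G/G]$ is $1$-shifted symplectic, a twisted conjugacy class $[\mathcal{C}_A/G] \to [G/G]$ is Lagrangian, and the conjecture reduces to Boalch's statement \cite{boalch2011riemann, MR3422691} that $r$ is Lagrangian.

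The main obstacle lies in the second step, and within it in the singular geometry of a general free divisor. The standard shifted-symplectic machinery \cite{MR3090262, MR3381468} takes smooth compact oriented manifolds (with boundary or corners) as input, whereas here $\mathbb{C}^k$ and $f^{-1}(S^1)$ are non-compact and the relevant boundary is the Kato--Nakayama space of $\mathbb{C}^k$ along a possibly highly singular $D$, which is a stratified space rather than a manifold with corners. Three points must therefore be handled by hand: (i) that the weighted-homogeneous conical structure lets one replace every space in sight by a compact model without altering the shifted structures, which is a $\mathbb{C}^*$-equivariance argument relying on positivity of the weights; (ii) that Poincar\'e--Lefschetz duality for the logarithmic de Rham complex holds in the required relative, cochain-level form, compatibly with the log comparison theorem and the Saito orientation --- this is the technical crux and is where one expects to need a careful local study of $T_{\mathbb{C}^k}(-\log D)$ near the strata of $D$, in the spirit of the normal-form results of \cite{bischoff2022normal}; and (iii) that the coisotropic structure is compatible with the residue constraint and persists on the extra components that appear for non-generic $S$ (Example \ref{Extracomponent}), where the semisimple part $S$ no longer controls the full deformation complex. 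I would carry this out first in the plane-curve case $f = x^p - y^q$, where the explicit models of Section \ref{planecurvesection} and the link to the Grothendieck--Springer resolution (Theorem \ref{parabolicderivedstack}) make the duality and the orientation transparent, before attempting the general free-divisor case.
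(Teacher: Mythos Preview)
The statement you are attempting to prove is a \emph{conjecture} in the paper, not a theorem: the paper does not prove it. The only evidence the paper offers is Theorem~\ref{tangentLiebialgebra}, which constructs a $-2$-shifted Poisson structure on the formal neighbourhood of the distinguished point $0$ in the plane-curve case $f = x^p - y^q$, via an ad hoc $-3$-shifted Manin triple built from the intersection form on $H^1(f^{-1}(1))$ and an invariant pairing on $\mathfrak{g}$. That is strictly weaker than a coisotropic structure on $r$: it is local (formal neighbourhood of one point), it is only for plane curves, and the author explicitly notes that he has not checked compatibility with the formalism of \cite{MR3653319}. So there is no ``paper's own proof'' to compare your proposal against.

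Your proposal is therefore not a proof but a plausible research outline, and you are candid about this in your final paragraph. The genuine gaps are exactly the ones you flag, and they are substantial: the shifted-symplectic mapping-stack machinery of \cite{MR3090262,MR3381468,MR3848016,MR3848017} requires as input a relative orientation (a cochain-level Poincar\'e--Lefschetz duality) on the pair, and for a general Saito free divisor there is at present no such statement --- the Kato--Nakayama space along a non-normal-crossings $D$ is not a manifold with corners, and the logarithmic comparison theorem is not known for arbitrary free divisors (weighted homogeneity alone does not suffice; one typically needs local quasi-homogeneity or Koszul-freeness). Your step (ii) is thus not a technicality but the entire content of the conjecture. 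Even in the plane-curve case your outline would need to produce the coisotropic structure globally and then show that its induced $-2$-shifted Poisson structure restricts, at the formal level near $0$, to the one the paper actually constructs; that compatibility check is itself nontrivial, since the paper's Manin triple is built by hand rather than deduced from any relative orientation.
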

In order to avoid the analytic issues that arise in taking the holonomy, it may be preferable to replace $Loc_{G}(f^{-1}(S^{1}))$ with a moduli space of flat connections on the complement of $D$.

In recent work \cite{MR4322004, MR4436683}, Pantev and To\"{e}n studied the moduli spaces of local systems and flat connections on non-compact algebraic varieties. They constructed shifted Poisson structures and explained how to obtain the symplectic leaves by imposing suitable boundary conditions at infinity. Conjecture \ref{conjecture} may be viewed as providing another source of boundary conditions for the moduli spaces associated to $f^{-1}(\mathbb{C}^k \setminus D)$. We hope that it may also be used in conjunction with their results, for example by considering the map $r$ in the presence of additional boundary conditions at the boundary of the fibres of $f$.

One implication of the conjecture is that the moduli spaces $[\mathcal{W}(A)/Aut(S)]$ should admit $2(1-k)$-shifted Poisson structures. In Theorem \ref{tangentLiebialgebra} we provide evidence for the conjecture by constructing a $-2$-shifted Poisson structure on the formal neighbourhood of a special connection in the case of plane curves $x^{p} =  y^{q}$. Our construction is somewhat ad hoc, but it makes use of an invariant inner product on the Lie algebra $\mathfrak{g}$, as well as the intersection pairing on the cohomology of the curve $f^{-1}(1)$. We have also not checked that our shifted Poisson structure fits into the formalism developed by \cite{MR3653319}. We hope to address all these issues in future work. 

\vspace{.1in}

\noindent \textbf{Acknowledgements.} 
I would like to thank Elliot Cheung for pointing me to the paper \cite{MR3248985}. 

\section{Homogeneous free divisors and logarithmic flat connections} \label{hfdandlog}
Assume that the given $\mathbb{C}^{*}$ action on $\mathbb{C}^k$ has strictly positive weights. It is generated infinitesimally by an Euler vector field 
\[
E = \sum_{i = 1}^{k} n_{i} z_{i} \partial_{z_{i}},
\]
where $n_{i} \in \mathbb{Z}_{>0}$ are positive integers. This vector field defines a \emph{weight grading} on the holomorphic functions $\mathcal{O}_{\mathbb{C}^k}$ (and more generally tensor fields) on $\mathbb{C}^k$, such that the coordinate function $z_{i}$ has weight $n_{i}$. This grading will play an important role. Because of our assumption, each weight space is finite-dimensional over $\mathbb{C}$. We also assume that the function $f$ defining $D$ is homogeneous of weight $r$: $E(f) = rf$. 

The $\mathbb{C}^*$ action determines an action Lie algebroid $\mathbb{C} \ltimes \mathbb{C}^k$ which is generated by the Euler vector field. Because $E$ is a section of $T_{\mathbb{C}^k}(-\log D)$, there is an induced Lie algebroid morphism 
\[
i : \mathbb{C} \ltimes \mathbb{C}^k \to T_{\mathbb{C}^k}(-\log D), \qquad (\lambda, z) \mapsto \lambda E_{z}. 
\]
The logarithmic $1$-form $d \log f = \frac{df}{f}$ is a closed section of $\Omega^{1}_{\mathbb{C}^{k}}(\log D)$. Hence, it determines a Lie algebroid morphism 
\[
\pi : T_{\mathbb{C}^k}(-\log D) \to \mathbb{C}, \qquad V \mapsto \frac{1}{r f} V(f), 
\]
where $\mathbb{C}$ is considered as an abelian Lie algebra. The composition $p = \pi \circ i : \mathbb{C} \ltimes \mathbb{C}^k \to \mathbb{C}$ is given by projection to the first factor. This has a section 
\[
j : \mathbb{C} \to \mathbb{C} \ltimes \mathbb{C}^k, \qquad \lambda \mapsto (\lambda, 0),
\]
which is also a Lie algebroid morphism. Altogether, we have the following diagram of Lie algebroids: 
\[
\begin{tikzpicture}[scale=1.5]
\node (A) at (0,1) {$ \mathbb{C} \ltimes \mathbb{C}^k$};
\node (B) at (2,1) {$ \mathbb{C}$};
\node (E) at (1,0) {$T_{\mathbb{C}^k}(-\log D) $};

\path[->,>=angle 90]
(A) edge node[below]{$i$} (E)
(E) edge node[below]{$\pi$} (B);

\path[->] (0.4,1.05) edge node[above]{$p$} (1.8,1.05);
\path[->] (1.8,0.95) edge node[below]{$j$} (0.4,0.95);
\end{tikzpicture}
\]
Each Lie algebroid determines a differential graded Lie algebra, whose Maurer-Cartan locus consists of flat algebroid connections. Furthermore, each morphism of Lie algebroids determines a pullback morphism between dgla's, and as a result, a pullback morphism between categories of representations, or more generally, derived moduli stacks of flat connections. This gives rise to the following diagram of (infinite-dimensional) derived stacks: 
\[
\begin{tikzpicture}[scale=1.5]
\node (A) at (0,1) {$[(\mathcal{O}_{\mathbb{C}^{k}} \otimes \mathfrak{g}) / \mathfrak{G} ]$};
\node (B) at (2,1) {$[\mathfrak{g}/G]$};
\node (E) at (1,0) {$[\mathcal{M}_{D, \mathfrak{g}} /\mathfrak{G} ]$};

\path[->,>=angle 90]
(E) edge node[below]{$i^*$} (A)
(B) edge node[below]{$\pi^*$} (E);

\path[->] (1.4,1.05) edge node[above]{$p^*$}  (0.8,1.05);
\path[->] (0.8,0.95) edge node[below]{$j^*$} (1.4,0.95);
\end{tikzpicture}
\]

In this diagram, $[\mathfrak{g}/G]$ is the moduli stack of $\mathfrak{g}$-representations of $\mathbb{C}$. It is the stack quotient corresponding to the adjoint action of $G$ on its Lie algebra. $[(\mathcal{O}_{\mathbb{C}^{k}} \otimes \mathfrak{g}) / \mathfrak{G} ]$ is the moduli stack of $\mathfrak{g}$-representations of $\mathbb{C} \ltimes \mathbb{C}^r$. In both cases the derived structure is trivial because the Lie algebroids have rank $1$. 

Now fix an element $A \in \mathfrak{g}$, let $O_{A} \subset \mathfrak{g}$ be its adjoint orbit, and let $G_{A} \subseteq G$ be its centralizer subgroup. This determines a substack $[O_{A}/G] \subset [\mathfrak{g}/G]$ which is Morita equivalent to $BG_{A}$. The preimage $[\mathcal{M}_{D, \mathfrak{g}}(A)/\mathfrak{G}] := (j^* i^*)^{-1}(BG_{A})$ is the derived stack of logarithmic flat connections $\omega$ whose `residue' $j^* i^*(\omega)$ lies in $O_{A}$. More precisely, the base of the derived manifold $\mathcal{M}_{D, \mathfrak{g}}(A)$ is given by 
\[
M(A) = \{ \omega \in \Omega^{1}_{\mathbb{C}^{k}}(\log D) \otimes \mathfrak{g} \ | \ j^* i^* (\omega) \in O_{A} \},
\]
with the bundle of curved dgla's restricted from $\mathcal{M}_{D, \mathfrak{g}}$. The action of $\mathfrak{G}$ preserves $M(A)$. 

\section{Finite dimensional model} \label{finitedimmodel}
Let $A = S + N_{0}$ be the Jordan decomposition of $A$, where $S$ is semisimple, $N_{0}$ is nilpotent, and $[S,N_{0}] = 0$. In this section we will construct a finite dimensional model for $[\mathcal{M}_{D, \mathfrak{g}}(A)/\mathfrak{G}] $.

\subsection*{The dgla $L_{D, \mathfrak{g}}$}

We start by analysing the structure of the dgla $L_{D, \mathfrak{g}}$. Being constructed from the cdga $\Omega^{\bullet}_{\mathbb{C}^{k}}(\log D)$ and the Lie algebra $\mathfrak{g}$, $L_{D, \mathfrak{g}}$ inherits their derivations. The basic ones are as follows: 
\begin{itemize}
\item the Lie algebroid differential $d$, which has degree $+1$ and squares to $0$,
\item the interior multiplication with the Euler vector field $\iota_{E}$, which has degree $-1$ and squares to $0$,
\item the adjoint action of $S$, $ad_{S}$, which has degree $0$. 
\end{itemize}
By taking commutator brackets we arrive at further derivations, such as $L_{E} = [\iota_{E}, d]$, the Lie derivative with respect to $E$, which is a derivation of degree $0$. We can also wedge any derivation by a differential form to obtain a new derivation. Let $\alpha_{0} = \frac{1}{r}d\log f$, which is a closed logarithmic $1$-form. Then $\alpha_{0} ad_{S}$ is a degree $+1$ derivation which squares to $0$. Among the $5$ derivations just described, almost all of them commute. The only two non-vanishing commutator brackets are the following:
\[
[\iota_{E}, d] = L_{E}, \qquad [\iota_{E}, \alpha_{0} ad_{S}] = ad_{S}. 
\]
The second bracket follows as a consequence of the identity $\iota_{E}(\alpha_{0}) = 1$. We are primarily interested in studying the dgla structure arising from 
\[
\delta_{S} = d + \alpha_{0} ad_{S},
\]
which is a degree $+1$ derivation that squares to $0$. We are also interested in the following degree $0$ derivation
\[
L_{S} := [\iota_{E}, \delta_{S}] = L_{E} + ad_{S}. 
\]
This operator is diagonalisable in the sense that any element $\beta \in L_{D, \mathfrak{g}}$ has a Taylor series expansion 
\[
\beta = \sum_{u} \beta_{u}
\]
where each term satisfies $L_{S}(\beta_{u}) = u \beta_{u}$. Indeed, the operator $ad_{S}$ is diagonalizable on $\mathfrak{g}$ with finitely many eigenvalues since $S$ is semisimple. The eigenspaces of $L_{E}$ are the weight spaces. We noted earlier that the weight degrees of holomorphic functions are strictly positive integers, and that each weight space is finite dimensional. As an operator on $\Omega^{\bullet}_{\mathbb{C}^{k}}(\log D)$, the eigenvalues of $L_{E}$ may not be positive, but they are integers which are bounded below. Hence, the eigenvalues of $L_{S}$ have the form $u_{i} + \mathbb{Z}_{\geq 0}$, for finitely many complex numbers $u_{i}$.  

Let $L_{D, \mathfrak{g}, u}$ denote the $u$-eigenspace, and note that it is finite-dimensional. Because $L_{S}$ is a derivation, the Lie bracket respects this decomposition: 
\[
[ -, - ] : L_{D, \mathfrak{g}, u} \times L_{D, \mathfrak{g}, v} \to L_{D, \mathfrak{g}, u + v}. 
\]
In particular, $L_{D, \mathfrak{g}, 0}$ is a finite-dimensional Lie subalgebra. The derivations $\delta_{S}$ and $\iota_{E}$ commute with $L_{S}$, and hence preserve its eigenspaces. In particular, they restrict to $L_{D, \mathfrak{g}, 0}$. 

Now introduce the degree $0$ derivation $P = \alpha_{0} \iota_{E}$. This derivation satisfies $P^2 = P$, and hence induces a decomposition $L_{D, \mathfrak{g}} = \ker(P) \oplus \im(P)$. Let $U = \ker(P)$ and let $I = \im(P)$. With respect to the bracket, $U$ is a subalgebra and $I$ is an abelian ideal. 
\begin{lemma} \label{decomposition}
The derivation $\iota_{E}$ vanishes on $U$. For every degree $i$ it defines an isomorphism 
\[
\iota_{E} : I^{i} \to U^{i-1},
\]
with inverse given by multiplication by $\alpha_{0}$. Therefore, as a graded Lie algebra, $L_{D, \mathfrak{g}}$ is isomorphic to $U \ltimes U[-1]$, where $U$ acts on $U[-1]$ via the adjoint action. 
\end{lemma}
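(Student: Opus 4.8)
The plan is to exploit the single algebraic identity $\iota_E(\alpha_0) = 1$, together with the fact that $\alpha_0$ is a logarithmic $1$-form (so wedging with $\alpha_0$ is a degree $+1$ operator squaring to zero) and that $\iota_E$ is an odd derivation squaring to zero. First I would record the basic consequence of $\iota_E(\alpha_0)=1$: for any homogeneous element $\beta$,
\[
P(\beta) = \alpha_0 \wedge \iota_E(\beta), \qquad \iota_E P(\beta) = \iota_E(\alpha_0)\,\iota_E(\beta) - \alpha_0 \wedge \iota_E^2(\beta) = \iota_E(\beta),
\]
using that $\iota_E$ is an odd derivation and $\iota_E^2 = 0$. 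This immediately gives $\iota_E \circ P = \iota_E$, and dually $P \circ (\alpha_0 \wedge -) = \alpha_0 \wedge -$ on the relevant range, from which $P^2 = P$ follows (this is asserted in the text, so I may simply cite it). Hence $L_{D,\mathfrak g} = \ker P \oplus \operatorname{im} P = U \oplus I$ as claimed.

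Next I would prove $\iota_E$ vanishes on $U = \ker P$: if $P\beta = 0$ then $\alpha_0 \wedge \iota_E(\beta) = 0$, and applying $\iota_E$ and using $\iota_E\circ P = \iota_E$ gives $\iota_E(\beta) = \iota_E P(\beta) = 0$. Conversely, on $I = \operatorname{im} P$, write an element as $P\gamma = \alpha_0 \wedge \iota_E(\gamma)$; then multiplication by $\alpha_0$, i.e. the map $U^{i-1} \to I^i$, $\xi \mapsto \alpha_0 \wedge \xi$, is well-defined (its image lies in $\operatorname{im} P$ since $P(\alpha_0\wedge\xi) = \alpha_0\wedge\iota_E(\alpha_0\wedge\xi) = \alpha_0\wedge\xi - \alpha_0\wedge\alpha_0\wedge\iota_E\xi = \alpha_0\wedge\xi$), and I would check it is a two-sided inverse to $\iota_E : I^i \to U^{i-1}$: for $\xi \in U^{i-1}$ we have $\iota_E(\alpha_0\wedge\xi) = \xi - \alpha_0\wedge\iota_E\xi = \xi$ since $\iota_E\xi = 0$; for $\eta \in I^i$, $\eta = P\eta = \alpha_0\wedge\iota_E(\eta)$, which is exactly $(\alpha_0\wedge -)\circ\iota_E$ applied to $\eta$. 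This establishes the claimed isomorphism $\iota_E : I^i \xrightarrow{\sim} U^{i-1}$, and in particular $I \cong U[-1]$ as graded vector spaces.

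Finally I would identify the graded Lie algebra structure. Since $P$ is a derivation of the bracket (it is built from the derivations $\alpha_0\wedge -$ and $\iota_E$), $U = \ker P$ is a subalgebra and $I = \operatorname{im} P$ is an ideal; that $I$ is abelian follows because any bracket of two elements of $I$ is of the form $[\alpha_0\wedge a, \alpha_0\wedge b]$, and expanding via the Leibniz rule for the $\Omega^\bullet_{\mathbb C^k}(\log D)$-module structure produces a factor $\alpha_0 \wedge \alpha_0 = 0$. Transporting the bracket through the isomorphism $\alpha_0 \wedge - : U[-1] \to I$ and using that $\iota_E$ (or equivalently $\alpha_0\wedge-$) intertwines the $U$-action, one checks $[\,\xi,\ \alpha_0\wedge\eta\,] = \alpha_0\wedge[\xi,\eta]$ for $\xi\in U$, $\eta\in U[-1]$, which says precisely that $U$ acts on $U[-1]$ by the adjoint action; together with $U$ being a subalgebra and $I$ abelian this is the semidirect product description $L_{D,\mathfrak g} \cong U \ltimes U[-1]$.

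The only mildly delicate point is sign-bookkeeping: $\iota_E$ and $\alpha_0\wedge-$ are odd operators, so the Leibniz identities $\iota_E(\alpha_0\wedge\xi) = \iota_E(\alpha_0)\xi - \alpha_0\wedge\iota_E(\xi)$ and the derivation property of $P$ on the graded bracket carry Koszul signs that must be tracked carefully; but since every identity needed is linear in $\iota_E$ and at most quadratic in $\alpha_0$ (where $\alpha_0\wedge\alpha_0 = 0$ kills the problematic terms), I expect no real obstacle beyond this routine check. The main conceptual input is simply $\iota_E(\alpha_0) = 1$.
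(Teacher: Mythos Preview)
Your proposal is correct and follows essentially the same route as the paper's proof. Your argument that $\iota_E$ vanishes on $U$ via the identity $\iota_E\circ P=\iota_E$ is in fact a little slicker than the paper's, which instead argues through $\ker(\alpha_0\wedge-)$ using that $\alpha_0$ is nowhere-vanishing; one small caution is that your parenthetical justification ``$P$ is a derivation because it is built from the derivations $\alpha_0\wedge-$ and $\iota_E$'' is not quite right (multiplication by $\alpha_0$ is \emph{not} a derivation of the graded Lie bracket), though the conclusion is correct and is asserted in the paper --- a direct check gives $P[a,b]=[Pa,b]+[a,Pb]$. The sign you anticipate in the final identification is exactly what the paper handles by writing the isomorphism as $\Xi(x,y)=(x,(-1)^i\iota_E y)$ in degree $i$.
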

\begin{proof}
It is clear that $\ker(\iota_{E}) \subseteq U = \ker(P)$. For the opposite inclusion, suppose that $P(x) = 0$. Then $\iota_{E}(x)$ is in the kernel of multiplication by $\alpha_{0}$. Since $\alpha_{0}$ is a non-vanishing algebroid $1$-form, $\iota_{E}(x)$ must be of the form $\alpha_{0} \wedge y$. But then 
\[
0 = \iota^{2}_{E}(x) = \iota_{E}(\alpha_{0} \wedge y) = y - \alpha_{0} \wedge \iota_{E}(y),
\]
which implies that $\iota_{E}(x) = 0$, as required. The image of $\iota_{E}$ is contained in $U$ since $\iota_{E}^2 = 0$. To see surjectivity, we can explicitely construct the inverse as mulitplication by $\alpha_{0}$. Given $x \in U$, check that $\alpha_{0} \wedge x = P(\alpha_{0} \wedge x) \in I$. Hence $\alpha_{0}\wedge : U^{i-1} \to I^{i}$. Then for $x \in U$, we have $\iota_{E}(\alpha_{0} \wedge x) = x$, and for $P(y) \in I$ we have $\alpha_{0} \wedge \iota_{E} P(y) = P^2(y) = P(y)$. 

Now define the isomorphism $\Xi : L_{D, \mathfrak{g}} \to U \ltimes U[-1]$ by the following formula in degree $i$: 
\[
U^{i} \oplus I^{i} \to U^{i} \oplus U^{i-1}, \qquad (x,y) \mapsto (x, (-1)^{i}\iota_{E}(y)). 
\]
This preserves Lie brackets. 
\end{proof}

The commutator $[P, L_{S}] = 0$. Therefore, the two operators can be simultaneously diagonalized. In particular, we have the decomposition $L_{D, \mathfrak{g}, 0} = U_{0} \oplus I_{0}$. The results of the previous lemma remain true for this subalgebra. Next, we have $[P, \delta_{S}] = \alpha_{0} L_{S}$. If we re-write this as the following identity 
\[
P \delta_{S} = \alpha_{0} L_{S} + \delta_{S} P
\]
then we can deduce that $\delta_{S}$ preserves $I$. Indeed, applying this identity to an element of the form $x = P(y)$, we obtain 
\[
P \delta_{S}(x) = \alpha_{0} L_{S} P(y) + \delta_{S} P^2(y) = \alpha_{0} P L_{S}(y) + \delta_{S}P(y) = \delta_{S}(x). 
\]
On the other hand, the differential $\delta_{S}$ does not preserve $U$. But by applying the identity to an element $x \in U$, we compute that the `off-diagonal' term is given by $P \delta_{S}(x) = \alpha_{0} L_{S}(x)$. This term vanishes when we restrict to the subalgebra $L_{D, \mathfrak{g}, 0}$. Hence, we obtain the following corollary. 

\begin{corollary} \label{shiftedadjoint}
The subalgebra $U_{0}$ is preserved by $\delta_{S}$, and there is an isomorphism of dgla's 
\[
(L_{D, \mathfrak{g}, 0}, \delta_{S}) \cong (U_{0}, \delta_{S}) \ltimes (U_{0}, \delta_{S})[-1]. 
\]
\end{corollary}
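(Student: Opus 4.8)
The plan is to reduce Corollary~\ref{shiftedadjoint} to two assertions and deduce both from operator identities that are already available. The first assertion is that $\delta_{S}$ maps $U_{0}$ into itself; the second is that the graded Lie algebra isomorphism $\Xi$ constructed in the proof of Lemma~\ref{decomposition} — which restricts to the subalgebra $L_{D, \mathfrak{g}, 0} = U_{0} \oplus I_{0}$, as noted just before the corollary — is moreover a chain map once the target is equipped with the differential of the semidirect product $(U_{0}, \delta_{S}) \ltimes (U_{0}, \delta_{S})[-1]$. The relevant identities are $[\iota_{E}, \delta_{S}] = L_{S}$, $[P, \delta_{S}] = \alpha_{0} L_{S}$, and $[\delta_{S}, L_{S}] = 0$ (the last already recorded above, and in any case a consequence of the graded Jacobi identity together with $\delta_{S}^{2} = 0$), as well as $P^{2} = P$, $U = \ker(P)$, $I = \im(P)$, and the isomorphism $\iota_{E} \colon I^{i} \to U^{i-1}$ whose inverse is wedging with $\alpha_{0}$.

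First I would show that $\delta_{S}$ preserves $U_{0}$. Since $[\delta_{S}, L_{S}] = 0$, the operator $\delta_{S}$ preserves every eigenspace of $L_{S}$, in particular $L_{D, \mathfrak{g}, 0}$. Rewriting $[P, \delta_{S}] = \alpha_{0} L_{S}$ as $P \delta_{S} = \alpha_{0} L_{S} + \delta_{S} P$ and applying it to $x \in U_{0} = \ker(P) \cap L_{D, \mathfrak{g}, 0}$ gives $P \delta_{S}(x) = \alpha_{0} L_{S}(x) + \delta_{S} P(x) = 0$, because both $L_{S}(x)$ and $P(x)$ vanish. Hence $\delta_{S}(x) \in \ker(P) = U$, and since $\delta_{S}(x) \in L_{D, \mathfrak{g}, 0}$ as well, in fact $\delta_{S}(x) \in U_{0}$. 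It then follows formally that $(U_{0}, \delta_{S})$ is a dgla: $U_{0}$ is an intersection of two Lie subalgebras ($U$ and $L_{D, \mathfrak{g}, 0}$), it is $\delta_{S}$-stable, and $\delta_{S}$ is a square-zero derivation of the bracket.

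Finally I would verify the chain-map property. Recall from the discussion preceding the corollary that $\delta_{S}$ also preserves the abelian ideal $I_{0}$, so that relative to $L_{D, \mathfrak{g}, 0} = U_{0} \oplus I_{0}$ the differential $\delta_{S}$ is ``block diagonal''. On the $U_{0}$ summand $\Xi$ is the identity and there is nothing to check. On the $I_{0}$ summand $\Xi$ agrees, up to the sign $(-1)^{i}$ occurring in degree $i$, with the isomorphism $\iota_{E}$, so what remains is to identify $\delta_{S}|_{I_{0}}$, transported through $\iota_{E}$, with the shifted differential on $(U_{0}, \delta_{S})[-1]$. This is precisely where $[\iota_{E}, \delta_{S}] = L_{S}$ enters: since $L_{S}$ vanishes on $L_{D, \mathfrak{g}, 0} \supseteq I_{0}$, we obtain $\iota_{E} \delta_{S} = - \delta_{S} \iota_{E}$ on $I_{0}$, and this minus sign is exactly what is absorbed by the $(-1)^{i}$ built into $\Xi$ together with the Koszul sign of the $[-1]$-shift. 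I expect the main obstacle to be none of the structural steps but this last piece of sign bookkeeping: one must fix a convention for the differential on the shifted dgla $(U_{0}, \delta_{S})[-1]$ and confirm that the particular normalization of $\Xi$ from Lemma~\ref{decomposition} intertwines it with $\delta_{S}$ — a short computation, but one that is easy to get wrong by a sign.
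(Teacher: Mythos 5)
Your proposal is correct and follows essentially the same route as the paper: the first half (that $\delta_{S}$ preserves $U_{0}$ because the off-diagonal term $P\delta_{S}(x)=\alpha_{0}L_{S}(x)$ vanishes on the zero-eigenspace) reproduces the discussion immediately preceding the corollary, and the second half is exactly the paper's one-line proof, namely that $[\iota_{E},\delta_{S}]=L_{S}=0$ on $L_{D,\mathfrak{g},0}$ makes $\Xi$ a chain map, with the sign $(-1)^{i}$ in $\Xi$ absorbing the anticommutation $\iota_{E}\delta_{S}=-\delta_{S}\iota_{E}$ on $I_{0}$. Your more explicit sign bookkeeping is a faithful expansion of what the paper leaves implicit.
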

\begin{proof}
On the subspace $L_{D, \mathfrak{g}, 0}$ we have $[\iota_{E}, \delta_{S}] = 0$. This implies that the morphism $\Xi$ from Lemma \ref{decomposition} is a chain map. 
\end{proof}

\subsection*{The gauge group $Aut(S)$}
Viewing $S \in \mathfrak{g}$ as a representation of $\mathbb{C}$, we can pull it back to obtain a representation $p^*S$ of $\mathbb{C} \ltimes \mathbb{C}^{k}$. Let $Aut(S)$ be the subgroup of the gauge group $\mathfrak{G}$ consisting of gauge transformations which preserve $p^*S$: 
\[
Aut(S) = \{ g \in \mathfrak{G} \ | \ g \ast p^* S = p^* S \}. 
\]
It is a finite-dimensional algebraic group whose Lie algebra is $L^{0}_{D, \mathfrak{g}, 0}$. We recall the description of its Levi decomposition which was given in \cite{bischoff2022normal}. The automorphism group of $j^* p^* S = S$ is $G_{S}$, the centralizer subgroup of $S$ in $G$, which is reductive. The pullback functor $j^*$ defines a homomorphism 
\[
j^{*}: Aut(S) \to G_{S}, \qquad g \mapsto g(0),
\]
and the pullback functor $p^*$ defines a splitting. The kernel of $j^*$, denoted $Aut_{0}(S)$, is the unipotent radical. Hence the isomorphism $Aut(S) \cong Aut_{0}(S) \rtimes G_{S}$ provides the Levi decomposition. 

Define the following \emph{gauge action} of $Aut(S)$ on $L^1_{D, \mathfrak{g}, 0}$: 
\[
g \ast x = g x g^{-1} - \delta_{S}(g)  g^{-1},
\]
where $\delta_{S}(g) g^{-1} = dg g^{-1} + \alpha_{0}(S - gSg^{-1}).$ 
\begin{lemma} \label{gaugeaction}
The gauge action of $Aut(S)$ is well-defined. In terms of the decomposition $L^1_{D, \mathfrak{g}, 0} = U^{1}_{0} \oplus I^{1}_{0}$ it is given by 
\[
g \ast (x, y) = (g x g^{-1} - \delta_{S}(g)  g^{-1}, g y g^{-1}),
\]
where $x \in U^{1}_{0}$ and $y \in I^{1}_{0}$. Furthermore, $Aut(S)$ acts on $L^{\bullet \geq 2}_{D, \mathfrak{g}, 0} $ by conjugation, preserving the decomposition $U_{0} \oplus I_{0}$ and the Lie bracket. 
\end{lemma}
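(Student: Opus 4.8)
The plan is to realize $\ast$ as the standard gauge action of the full gauge group $\mathfrak{G}$ on $L^{1}_{D,\mathfrak{g}}$, transported along the affine translation $x\mapsto\alpha_{0}S+x$, and then to check that $Aut(S)$ preserves the linear subspace $L^{1}_{D,\mathfrak{g},0}$. Writing $g\cdot\omega = g\omega g^{-1}-dg\,g^{-1}$ for the usual action, one computes $g\cdot(\alpha_{0}S+x) = \alpha_{0}S + \big(gxg^{-1}-\delta_{S}(g)g^{-1}\big)$ with $\delta_{S}(g)g^{-1} = dg\,g^{-1}+\alpha_{0}(S-gSg^{-1})$ the covariant derivative of $g$ for the connection $\alpha_{0}S$, so that $g\ast x = g\cdot(\alpha_{0}S+x)-\alpha_{0}S$. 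Since $\cdot$ is a left action of $\mathfrak{G}$ on the affine space $L^{1}_{D,\mathfrak{g}}$, its transport $\ast$ is automatically a left action; the substance of ``well-defined'' is thus that both $gxg^{-1}$ and $\delta_{S}(g)g^{-1}$ lie in $L^{1}_{D,\mathfrak{g},0}$ whenever $g\in Aut(S)$ and $x\in L^{1}_{D,\mathfrak{g},0}$.

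The key input is that $g\in Aut(S)$ is equivalent to $L_{S}(g)=0$, where $L_{S}=L_{E}+\mathrm{ad}_{S}$ is the natural extension of this derivation to $G$-valued maps, explicitly $L_{S}(g)=E(g)+[S,g]$. This comes from unwinding the defining condition $g\ast p^{*}S = p^{*}S$: the gauge action of $g$ on the connection $p^{*}S$ of the action algebroid $\mathbb{C}\ltimes\mathbb{C}^{k}$ amounts to $gSg^{-1}-E(g)g^{-1}$ (the algebroid differential of $\mathbb{C}\ltimes\mathbb{C}^{k}$ acts along the generating section as $E$), so the condition reads $gSg^{-1}-E(g)g^{-1}=S$, i.e.\ $E(g)+[S,g]=0$. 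This is the group-level form of the statement $\mathrm{Lie}(Aut(S))=L^{0}_{D,\mathfrak{g},0}=\ker(L_{S}|_{L^{0}})$ recalled from \cite{bischoff2022normal}. (Alternatively, one may bypass this by using that $Aut(S)$ is connected and generated by exponentials of $L^{0}_{D,\mathfrak{g},0}$.)

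Granting $L_{S}(g)=0$, well-definedness follows from two short derivation identities. First, since $L_{S}$ is a derivation of the product and $L_{S}(g^{-1})=-g^{-1}L_{S}(g)g^{-1}=0$, we get $L_{S}(g\beta g^{-1})=g\,L_{S}(\beta)\,g^{-1}$ for every $\beta$; as conjugation also preserves form-degree, $g(-)g^{-1}$ preserves each $L^{i}_{D,\mathfrak{g},0}=\ker(L_{S}|_{L^{i}})$. Second, using $[L_{S},\delta_{S}]=0$, $L_{S}\big(\delta_{S}(g)g^{-1}\big)=\delta_{S}(L_{S}g)g^{-1}-\delta_{S}(g)g^{-1}(L_{S}g)g^{-1}=0$, so $\delta_{S}(g)g^{-1}\in L^{1}_{D,\mathfrak{g},0}$; hence $g\ast x\in L^{1}_{D,\mathfrak{g},0}$. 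For the formula in the decomposition $L^{1}_{D,\mathfrak{g},0}=U^{1}_{0}\oplus I^{1}_{0}$: conjugation by $g$ commutes with $P=\alpha_{0}\iota_{E}$ (it acts only on the $\mathfrak{g}$-factor, $P$ only on the form-factor), hence preserves $U=\ker P$ and $I=\im P$, giving $g(x,y)g^{-1}=(gxg^{-1},gyg^{-1})$; and $\iota_{E}\big(\delta_{S}(g)g^{-1}\big)=E(g)g^{-1}+(S-gSg^{-1})=0$ by the identity above, so $\delta_{S}(g)g^{-1}\in\ker\iota_{E}=U^{1}$ by Lemma~\ref{decomposition} and, being of weight zero, lies in $U^{1}_{0}$. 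Assembling yields $g\ast(x,y)=\big(gxg^{-1}-\delta_{S}(g)g^{-1},\,gyg^{-1}\big)$ with first component in $U^{1}_{0}$ and second in $I^{1}_{0}$.

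For the final assertion, the conjugation action of $Aut(S)$ on $L^{\bullet\geq2}_{D,\mathfrak{g},0}$ is the restriction of the adjoint action of $\mathfrak{G}$ on $L^{\bullet\geq2}_{D,\mathfrak{g}}$; it preserves $L_{D,\mathfrak{g},0}$ (by $L_{S}(g\beta g^{-1})=gL_{S}(\beta)g^{-1}$) and each degree, preserves $U_{0}\oplus I_{0}$ (by commuting with $P$), and preserves the bracket since pointwise $\mathrm{Ad}_{g(z)}$ is a Lie-algebra automorphism while the bracket on $L_{D,\mathfrak{g}}$ is the pointwise extension of the one on $\mathfrak{g}$. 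I expect the only genuinely delicate step to be the equivalence $g\in Aut(S)\Leftrightarrow L_{S}(g)=0$, which forces one to chase $p^{*}S$ through the Lie-algebroid pullback formalism (or to quote \cite{bischoff2022normal}); after that the argument is a sequence of one-line computations, the one subtlety being that $\delta_{S}$ preserves the subalgebra $U$ only after restricting to weight zero, so it is essential that elements of $Aut(S)$ are themselves of weight zero.
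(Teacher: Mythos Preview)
Your proposal is correct and follows essentially the same route as the paper's proof: both reduce the claim to showing that conjugation by $g\in Aut(S)$ commutes with $L_{S}$ and with $P$, and that the inhomogeneous term $\delta_{S}(g)g^{-1}$ lands in $U^{1}_{0}$. The only cosmetic differences are that the paper checks $\delta_{S}(g)g^{-1}\in U$ via $P(\delta_{S}(g)g^{-1})=\alpha_{0}\big(L_{E}(g)g^{-1}+S-gSg^{-1}\big)=0$ whereas you check the equivalent condition $\iota_{E}(\delta_{S}(g)g^{-1})=0$ (using $\ker\iota_{E}=U$ from Lemma~\ref{decomposition}), and that you make explicit both the affine-translation interpretation of $\ast$ and the characterization $g\in Aut(S)\Leftrightarrow L_{S}(g)=0$, which the paper leaves implicit.
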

\begin{proof}
A computation shows that $L_{S}(g \ast x) = g(L_{S}x)g^{-1}$ for $x \in L^1_{D, \mathfrak{g}}$, showing that $L^1_{D, \mathfrak{g}, 0}$ is preserved. Similarly, $L_{S}(g x g^{-1}) = g(L_{S}x)g^{-1}$ for $x \in L^j_{D, \mathfrak{g}}$, showing that the conjugation action preserves $L^{\bullet \geq 2}_{D, \mathfrak{g}, 0} $. Next, for $x \in L_{D, \mathfrak{g}, 0}$ we have $P(gxg^{-1}) = g P(x) g^{-1}$, implying that the conjugation also preserves $U_{0}$ and $I_{0}$. Finally, 
\[
P(\delta_{S}(g) g^{-1}) = \alpha_{0}(L_{E}(g) g^{-1} + S - g S g^{-1}),
\]
which vanishes for $g \in Aut(S)$. Hence $\delta_{S}(g) g^{-1} \in U^{1}_{0}$. 
\end{proof}

\subsection*{The finite-dimensional derived stack}
Given the finite dimensional dgla $L_{D, \mathfrak{g}, 0}$ we obtain a derived manifold $\mathcal{W}_{S}$. The base manifold is the vector space $W_{S} = L^{1}_{D, \mathfrak{g}, 0}$, the bundle of curved dgla's is the trivial bundle $W_{S} \times L^{\bullet \geq 2}_{D, \mathfrak{g}, 0} $, the curvature section is given by the standard formula $F_{S}(w) = \delta_{S}(w) + \frac{1}{2}[w, w]$, and the twisted differential $\delta$ is given by 
\[
\delta_{S,w} = \delta_{S} + [w, - ],
\]
for $w \in W_{S}$. Furthermore, Lemma \ref{gaugeaction} gives an equivariant action of $Aut(S)$ on $W_{S} \times L^{\bullet \geq 2}_{D, \mathfrak{g}, 0} $, preserving the bracket. It is also straightforward to check that this action preserves $F_{S}$ and $\delta$. Hence, we obtain a derived stack $[\mathcal{W}_{S}/Aut(S)]$. 

$U_{0}$ is a sub-dgla of $L_{D, \mathfrak{g}, 0}$, which is preserved by the action of $Aut(S)$. Hence, it gives rise to a derived substack $[\mathcal{U}_{S}/Aut(S)]$ of $[\mathcal{W}_{S}/Aut(S)]$. Furthermore, since $I_{0}$ is an ideal of $L_{D, \mathfrak{g}, 0}$, we also get a projection morphism $ [\mathcal{W}_{S}/Aut(S)] \to [\mathcal{U}_{S}/Aut(S)].$

\begin{proposition}
The derived stack $[\mathcal{W}_{S}/Aut(S)]$ is isomorphic to the shifted tangent bundle $T[-1] [\mathcal{U}_{S}/Aut(S)]$. 
\end{proposition}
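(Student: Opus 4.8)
The plan is to read the statement off from Corollary~\ref{shiftedadjoint}, once the shifted tangent bundle $T[-1]$ is expressed in the language of bundles of curved dgla's. Recall first that a dgla $(\mathfrak{h},\delta)$ equipped with an action of an algebraic group $H$ by dgla automorphisms (with $\mathrm{Lie}(H)$ acting compatibly with $\mathfrak{h}^0$) produces a derived stack $[\mathcal{X}_{\mathfrak{h}}/H]$ with base $\mathfrak{h}^1$, bundle $\mathfrak{h}^{\geq 2}$, curvature $w\mapsto \delta w+\tfrac12[w,w]$ and twisted differential $\delta+[w,-]$; its tangent complex at a Maurer--Cartan element $w$ is $(\mathfrak{h},\delta+[w,-])$ with $\mathfrak{h}^j$ in cohomological degree $j-1$, the term $\mathfrak{h}^0=\mathrm{Lie}(H)$ occupying degree $-1$. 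The first step is then to record the (essentially formal) identity that $T[-1][\mathcal{X}_{\mathfrak{h}}/H]$ is the derived stack associated to the dgla $\mathfrak{h}\ltimes\mathfrak{h}[-1]$ --- the abelian extension with $\mathfrak{h}$ acting on $\mathfrak{h}[-1]$ by the adjoint action --- together with the diagonal $H$-action. Conceptually this reflects the fact that, on controlling dgla's, forming $T[-1]$ amounts to tensoring with the square-zero dg-algebra $\mathbb{C}[\epsilon]$ with $\epsilon$ of cohomological degree $1$, which turns $\mathfrak{h}$ into $\mathfrak{h}\otimes\mathbb{C}[\epsilon]=\mathfrak{h}\ltimes\mathfrak{h}[-1]$; alternatively one checks it degreewise using $\mathbb{T}_{T[-1]\mathcal{X}}=\mathbb{T}_{\mathcal{X}}\oplus\mathbb{T}_{\mathcal{X}}[-1]$, which for $\mathcal{X}=\mathcal{X}_{\mathfrak{h}}$ recovers precisely the tangent complex of $\mathcal{X}_{\mathfrak{h}\ltimes\mathfrak{h}[-1]}$.

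Granting this, the proof is a translation. By construction $[\mathcal{W}_S/Aut(S)]$ is the derived stack of the finite-dimensional dgla $(L_{D,\mathfrak{g},0},\delta_S)$ with its $Aut(S)$-action, while $[\mathcal{U}_S/Aut(S)]$ is that of the sub-dgla $(U_0,\delta_S)$. Corollary~\ref{shiftedadjoint} gives an isomorphism of dgla's $(L_{D,\mathfrak{g},0},\delta_S)\cong (U_0,\delta_S)\ltimes (U_0,\delta_S)[-1]$, and the next step is to verify that it intertwines the $Aut(S)$-actions, the right-hand side carrying the diagonal one. This is exactly the content of Lemma~\ref{gaugeaction}: $Aut(S)$ preserves the splitting $L_{D,\mathfrak{g},0}=U_0\oplus I_0$, acts on $U_0$ by the gauge action (by conjugation in degrees $\geq 2$), and acts on $I_0$ by conjugation; under the identification $I_0\cong U_0[-1]$ of Lemma~\ref{decomposition}, which is multiplication by the $1$-form $\alpha_0$ and hence commutes with conjugation on the $\mathfrak{g}$-factor, the latter action becomes the adjoint action on the shifted copy of $U_0$. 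So the isomorphism of Corollary~\ref{shiftedadjoint} upgrades to one of (dgla $+$ group)-data, and applying the first step with $\mathfrak{h}=U_0$ and $H=Aut(S)$ gives the desired
\[
[\mathcal{W}_S/Aut(S)]\;\cong\;[\mathcal{X}_{U_0\ltimes U_0[-1]}/Aut(S)]\;\cong\;T[-1][\mathcal{U}_S/Aut(S)],
\]
the curvature sections and twisted differentials corresponding automatically since both sides are given by the universal formulas $w\mapsto\delta w+\tfrac12[w,w]$ and $\delta+[w,-]$.

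The step I expect to require the most care is not any computation but the degree bookkeeping around the base. The base of $\mathcal{W}_S$ is $W_S=L^1_{D,\mathfrak{g},0}=U_0^1\oplus I_0^1$ with $I_0^1\cong U_0^0=\mathrm{Lie}(Aut(S))$, whereas the shifted tangent bundle of the \emph{bare} derived manifold $\mathcal{U}_S$ would carry only $U_0^1$ as its base. The point is that $T[-1]$ must be formed for the \emph{stack} $[\mathcal{U}_S/Aut(S)]$, whose tangent complex places $\mathrm{Lie}(Aut(S))$ in cohomological degree $-1$; the $[-1]$-shift moves this into degree $0$, supplying exactly the summand $I_0^1$ of $W_S$. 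Pinning down the conventions of \cite{MR3248985} so that this is transparent --- i.e.\ so that ``$T[-1]$ of the quotient'' is computed by $U_0\ltimes U_0[-1]$ \emph{with} its diagonal $Aut(S)$-action rather than by $T[-1]$ of the atlas alone --- is the one delicate point; everything else is already packaged in Corollary~\ref{shiftedadjoint} and Lemma~\ref{gaugeaction}.
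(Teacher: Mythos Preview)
Your proposal is correct and follows exactly the paper's approach: the paper's proof is the single sentence ``This follows from Corollary~\ref{shiftedadjoint} and Lemma~\ref{gaugeaction},'' and you have simply unpacked what those two citations entail, including the identification of $T[-1]$ with the semidirect product $\mathfrak{h}\ltimes\mathfrak{h}[-1]$ and the degree bookkeeping for the base. Your expanded discussion is more explicit than the paper's, but the underlying argument is identical.
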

\begin{proof}
This follows from Corollary \ref{shiftedadjoint} and Lemma \ref{gaugeaction}.
\end{proof}

We are actually interested in a substack of $[\mathcal{W}_{S}/Aut(S)]$ which is determined by the element $A = S + N_{0}$. Recall that the image of $Aut(S)$ under $j^{*}$ is $G_{S}$, the centralizer of $S$. This implies that for any element $\omega \in W_{S}$, the image $j^{*} i^{*}(\omega) \in \mathfrak{g}_{S} = Lie(G_{S})$. We will require that this element be contained in $G_{S} \ast N_{0}$, the adjoint orbit of $N_{0}$ in $\mathfrak{g}_{S}$. Namely, define 
\[
W(A) = \{ \omega \in W_{S} \ | \ j^{*} i^{*}(\omega) \in G_{S} \ast N_{0} \}. 
\]
Let $\mathcal{W}(A)$ be the derived manifold obtained by pulling back the bundle of curved dgla's from $W_{S}$ to $W(A)$. The action of $Aut(S)$ restricts to an action on this sub-manifold. Hence, we obtain a derived stack $[\mathcal{W}(A)/Aut(S)]$. 

\begin{theorem} \label{Maintheorem}
$[\mathcal{W}(A)/Aut(S)]$ is equivalent to $[\mathcal{M}_{D, \mathfrak{g}}(A)/\mathfrak{G}] $, the derived stack of logarithmic flat connections whose residue lies in the adjoint orbit $O_{A}$ of $A$. 
\end{theorem}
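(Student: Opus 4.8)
The plan is to exhibit an explicit comparison map $q$ and then verify the two properties that, by definition, make it an equivalence of derived stacks: it induces an equivalence of Maurer--Cartan groupoids, and for every solution $w$ the derivative $dq_{w}$ is a quasi-isomorphism of tangent complexes. On underlying points, send $w\in W(A)$ to $q(w)=\alpha_{0}\otimes S+w\in\Omega^{1}_{\mathbb{C}^{k}}(\log D)\otimes\mathfrak{g}$. Because $d\alpha_{0}=0$ and $[\alpha_{0}\otimes S,\alpha_{0}\otimes S]=0$, one has $F(q(w))=\delta_{S}(w)+\frac{1}{2}[w,w]=F_{S}(w)$, so $q$ sends Maurer--Cartan elements to Maurer--Cartan elements; since $j^{*}$ maps $Aut(S)$ onto $G_{S}$ and $j^{*}i^{*}w\in G_{S}\ast N_{0}$ for $w\in W(A)$, the residue $j^{*}i^{*}(q(w))=S+j^{*}i^{*}w$ lies in $O_{A}$, so $q$ lands in $M(A)$; and the formula of Lemma \ref{gaugeaction} gives $g\ast q(w)=q(g\ast w)$ for $g\in Aut(S)\subseteq\mathfrak{G}$, so $q$ is equivariant along $Aut(S)\hookrightarrow\mathfrak{G}$ and descends to the stack quotients. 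On infinitesimal data it is induced by the inclusion of the zero $L_{S}$-eigenspace $L_{D,\mathfrak{g},0}\hookrightarrow L_{D,\mathfrak{g}}$, which is compatible with the $w$-twisted differential $\delta_{S,w}=\delta_{S}+[w,-]$ because $w$ has $L_{S}$-weight $0$.

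The equivalence of Maurer--Cartan groupoids is precisely the finite-dimensional normal form theorem for logarithmic flat connections of fixed residue established in \cite{bischoff2022normal}: every flat logarithmic connection with residue in $O_{A}$ is $\mathfrak{G}$-gauge equivalent to one of the form $\alpha_{0}\otimes S+w$ with $w\in W(A)$ solving $F_{S}(w)=0$, and two such are $\mathfrak{G}$-equivalent iff they are $Aut(S)$-equivalent, which is exactly essential surjectivity and full faithfulness of the functor induced by $q$. (The normal form is produced by killing the non-zero $L_{S}$-weight components of a connection one weight at a time, using that $\iota_{E}$ trivialises them by Lemma \ref{decomposition}; the residual gauge symmetry is $Aut(S)$.) This disposes of the classical part, so the new content is the statement about tangent complexes.

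Fix a solution $w$, put $\omega=q(w)$, and write $N=j^{*}i^{*}w\in\mathfrak{g}_{S}$, a nilpotent element $G_{S}$-conjugate to $N_{0}$, and $A'=S+N\in O_{A}$. Since $\delta_{\omega}=d+[\omega,-]=\delta_{S}+[w,-]=\delta_{S,w}$, the tangent complex of $[\mathcal{M}_{D,\mathfrak{g}}(A)/\mathfrak{G}]$ at $\omega$ is the subcomplex of $(L_{D,\mathfrak{g}},\delta_{S,w})$ (placed with $L^{0}$ in cohomological degree $-1$) in which $L^{1}_{D,\mathfrak{g}}$ is replaced by $T_{\omega}M(A)=\{v: j^{*}i^{*}v\in[\mathfrak{g},A']\}$; it sits in a short exact sequence of complexes whose quotient is $L^{1}_{D,\mathfrak{g}}/T_{\omega}M(A)\cong\mathfrak{g}/[\mathfrak{g},A']$ concentrated in degree $0$. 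Likewise the tangent complex of $[\mathcal{W}(A)/Aut(S)]$ at $w$ is the analogous subcomplex of $(L_{D,\mathfrak{g},0},\delta_{S,w})$ with quotient $\mathfrak{g}_{S}/[\mathfrak{g}_{S},N]$, and $dq_{w}$ is a map of these two short exact sequences induced by $L_{D,\mathfrak{g},0}\hookrightarrow L_{D,\mathfrak{g}}$. By the five lemma it therefore suffices to check (i) the inclusion $(L_{D,\mathfrak{g},0},\delta_{S,w})\hookrightarrow(L_{D,\mathfrak{g}},\delta_{S,w})$ is a quasi-isomorphism, and (ii) the natural map $\mathfrak{g}_{S}/[\mathfrak{g}_{S},N]\to\mathfrak{g}/[\mathfrak{g},A']$ is an isomorphism. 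For (ii): as $\mathrm{ad}_{S}$ is semisimple, $\mathfrak{g}=\mathfrak{g}_{S}\oplus\mathrm{im}(\mathrm{ad}_{S})$, and since $[S,N]=0$ with $\mathrm{ad}_{N}$ nilpotent, $\mathrm{ad}_{A'}=\mathrm{ad}_{S}+\mathrm{ad}_{N}$ preserves this splitting and is invertible on $\mathrm{im}(\mathrm{ad}_{S})$; hence $[\mathfrak{g},A']=[\mathfrak{g}_{S},N]\oplus\mathrm{im}(\mathrm{ad}_{S})$ and the map in (ii) is an isomorphism.

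Claim (i) is the heart of the argument. Since $\delta_{S,w}$ commutes with $L_{S}$ (because $L_{S}w=0$), the complex splits as $(L_{D,\mathfrak{g}},\delta_{S,w})=(L_{D,\mathfrak{g},0},\delta_{S,w})\oplus(L',\delta_{S,w})$ with $L'=\bigoplus_{u\neq0}L_{D,\mathfrak{g},u}$, so (i) reduces to the acyclicity of $(L',\delta_{S,w})$. Consider the degree-$0$ operator $K=[\iota_{E},\delta_{S,w}]=L_{S}+\mathrm{ad}_{\iota_{E}w}$; it commutes with $\delta_{S,w}$ (as $\delta_{S,w}^{2}=0$) and with $L_{S}$, hence preserves $L'$. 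On each finite-dimensional eigenspace $L_{D,\mathfrak{g},u}$ with $u\neq0$, $K$ equals $u\cdot\mathrm{id}$ plus $\mathrm{ad}_{\iota_{E}w}$, and $\mathrm{ad}_{\iota_{E}w}$ is nilpotent there: indeed $\iota_{E}$ annihilates logarithmic forms of negative $L_{E}$-weight, so $\iota_{E}w=N+(\text{positive }L_{E}\text{-weight})$, and with respect to the $L_{E}$-weight grading of $L_{D,\mathfrak{g},u}$ the operator $\mathrm{ad}_{\iota_{E}w}$ is block upper-triangular with diagonal blocks $\mathrm{ad}_{N}$, which are nilpotent since $N$ is nilpotent. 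Hence $K$ is invertible on $L'$, $K^{-1}\iota_{E}$ is a contracting homotopy for $\delta_{S,w}$ there, and $(L',\delta_{S,w})$ is acyclic; this proves (i), and with (ii) and the five lemma, $dq_{w}$ is a quasi-isomorphism. The delicate point throughout is exactly (i): one must control the infinite-dimensional ``off-diagonal'' part $L'$, and it is the combination of the $L_{S}$-grading (splitting off $L'$ into finite-dimensional pieces on which $L_{S}$ acts invertibly) with the residue constraint (which forces the leading term of $\iota_{E}w$ to be the nilpotent element $N$) that lets the homotopy $\iota_{E}$ for $\delta_{S}$ survive the twist by the Maurer--Cartan element; the remaining input, the groupoid equivalence, is carried by the classical normal form of \cite{bischoff2022normal}.
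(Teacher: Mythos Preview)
Your argument is correct and reaches the same conclusion as the paper, but the route you take for the quasi-isomorphism of tangent complexes is genuinely different. The paper proceeds by building an explicit deformation retract $(a,b,h)$ between $(L_{D,\mathfrak{g},0},\delta_{S})$ and $(L_{D,\mathfrak{g}},\delta_{S})$, applies the homological perturbation lemma to the perturbation $ad_{w}$ (checking that $ad_{w}\circ h$ is nilpotent via Lemma~\ref{AutomaticNilpotency}), identifies the perturbed data as $(a,b,h',\delta_{S,w})$, and then argues separately that all four maps restrict to the tangent subcomplexes cut out by the residue constraint. You instead split off the constraint first via the short exact sequences with quotients $\mathfrak{g}_{S}/[\mathfrak{g}_{S},N]$ and $\mathfrak{g}/[\mathfrak{g},A']$, reduce by the five lemma to the unconstrained inclusion $(L_{D,\mathfrak{g},0},\delta_{S,w})\hookrightarrow(L_{D,\mathfrak{g}},\delta_{S,w})$, and then kill the complement $L'$ in one stroke with the contracting homotopy $K^{-1}\iota_{E}$, where $K=[\iota_{E},\delta_{S,w}]=L_{S}+ad_{\iota_{E}w}$.

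The paper's method yields more: a full homotopy-retract datum that could in principle transfer further structure (e.g.\ $L_{\infty}$ brackets) to the finite-dimensional side, at the cost of tracking the side conditions and checking restriction to the residue-constrained subcomplex by hand. Your method is more economical and makes transparent exactly where each hypothesis enters: the $L_{S}$-grading splits off $L'$ into finite-dimensional eigenspaces on which $L_{S}$ is invertible, and the residue condition forces the constant term $N$ of $\iota_{E}w$ to be nilpotent, so that $K$ is upper-triangular with invertible diagonal. One small point to tighten: when you cite \cite{bischoff2022normal} for the groupoid equivalence, note that the paper explicitly supplements that citation with Lemma~\ref{AutomaticNilpotency} (nilpotency of $\iota_{E}w$ in $U^{0}_{0}$); you establish the nilpotency of $N=j^{*}i^{*}w$ in the course of proving (i), which is what is actually needed, but it is worth flagging that this is the hypothesis under which the cited normal-form theorem applies.
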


\section{Proof of Theorem \ref{Maintheorem}} 

In this section we will give the proof of the equivalence between $[\mathcal{W}(A)/Aut(S)]$ and $[\mathcal{M}_{D, \mathfrak{g}}(A)/\mathfrak{G}] $. There is a natural morphism 
\[
q: [\mathcal{W}(A)/Aut(S)] \to [\mathcal{M}_{D, \mathfrak{g}}(A)/\mathfrak{G}],
\]
which we describe as follows: 
\begin{enumerate}
\item The map on the base manifolds is given by the following formula 
\[
q: W(A) \to M(A), \qquad \omega \mapsto \alpha_{0} S + \omega.
\]
\item The map on bundles of curved dgla is given by the inclusion $L^{\bullet \geq 2}_{D, \mathfrak{g}, 0} \to L^{\bullet \geq 2}_{D, \mathfrak{g}}$. 
\item The group $Aut(S)$ includes into $\mathfrak{G}$ as a subgroup, and the map $q$ is equivariant. 
\end{enumerate}
In order to show that $q$ is an equivalence, we must show two things. First, there is an underlying functor between the classical groupoids: 
\[
\pi_{0}(q) : Aut(S) \ltimes MC(\mathcal{W}(A)) \to \mathfrak{G} \ltimes MC(\mathcal{M}_{D, \mathfrak{g}}(A)). 
\]
We need to show that this is an equivalence of categories. This is implied by \cite[Theorem 5.5]{bischoff2022normal} and the following lemma.

\begin{lemma} \label{AutomaticNilpotency}
Let $\omega \in W(A)$. Then $\iota_{E}(\omega)$ is nilpotent. 
\end{lemma}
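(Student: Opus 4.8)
The plan is to unwind the definitions of $W(A)$, $\iota_E$, and the operator $L_S = L_E + \ad_S$, and to exploit the weight grading. Recall that $\omega \in W_S = L^1_{D,\mathfrak{g},0}$ means $L_S(\omega) = 0$, i.e. $\omega$ sits in the $0$-eigenspace of $L_S$. Write $\beta := \iota_E(\omega) \in L^0_{D,\mathfrak{g},0} = \Map(\mathbb{C}^k,\mathfrak{g})$ evaluated in weight-graded pieces. Since $\iota_E$ commutes with $L_S$, we have $L_S(\beta) = 0$ as well, so $\beta = \iota_E(\omega)$ is a section of $\mathcal{O}_{\mathbb{C}^k}\otimes\mathfrak{g}$ lying in the zero eigenspace of $L_E + \ad_S$. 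Decomposing $\mathfrak{g} = \bigoplus_\lambda \mathfrak{g}_\lambda$ into $\ad_S$-eigenspaces and $\mathcal{O}_{\mathbb{C}^k} = \bigoplus_{m\ge 0}\mathcal{O}_m$ into weight spaces, the condition $L_S(\beta) = 0$ forces $\beta = \sum_\lambda \beta_\lambda$ with $\beta_\lambda \in \mathcal{O}_{-\lambda}\otimes\mathfrak{g}_\lambda$; in particular only finitely many $\lambda$ (those with $-\lambda \in \mathbb{Z}_{\ge 0}$ and $-\lambda$ an actual weight) contribute, and $\beta_0 \in \mathcal{O}_0\otimes\mathfrak{g}_0$ is constant.

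Next I would identify $\beta_0$ with the residue. The composite $j^* i^*$ sends $\omega$ to the value at the origin of $\iota_E(\omega)$, which is exactly the constant term $\beta_0 \in \mathfrak{g}_0 = \mathfrak{g}_S$. The defining condition of $W(A)$ says $\beta_0 = j^* i^*(\omega) \in G_S \ast N_0$, the adjoint $G_S$-orbit of the nilpotent $N_0$; hence $\beta_0$ is a nilpotent element of $\mathfrak{g}_S$, and in particular a nilpotent element of $\mathfrak{g}$. So $\beta = \beta_0 + (\beta - \beta_0)$ where $\beta_0$ is nilpotent and $\beta - \beta_0 = \sum_{\lambda\neq 0}\beta_\lambda$ is a sum of terms strictly lowering the $\ad_S$-weight (since each $\beta_\lambda\in\mathfrak{g}_\lambda$ with $\lambda < 0$... more precisely $\lambda$ ranges over nonzero values with $-\lambda\in\mathbb{Z}_{>0}$).

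The key step is then a filtration/grading argument showing that $\beta$ is nilpotent as a (pointwise, or generic-point) element of $\mathfrak{g}$. Consider the adjoint action: for each point $z\in\mathbb{C}^k$, $\beta(z) = \sum_\lambda \beta_\lambda(z)$ with $\beta_\lambda(z)\in\mathfrak{g}_\lambda$. Using the grading $\mathfrak{g} = \bigoplus_\lambda\mathfrak{g}_\lambda$ by $\ad_S$-eigenvalues, the component $\beta_0(z) = \beta_0$ is nilpotent in $\mathfrak{g}_0$, and all other components lie in negatively-graded pieces (after reindexing so the relevant eigenvalues are positive integers times $-1$). An element of a $\mathbb{Z}$-graded Lie algebra whose degree-$0$ part is nilpotent in $\mathfrak{g}_0$ and whose other parts lie in $\bigoplus_{\lambda<0}\mathfrak{g}_\lambda$ is nilpotent: in any representation, $\ad_{\beta_0}$ is nilpotent on $\mathfrak{g}_0$ and raises filtration degree when restricted appropriately, while the negative-degree terms are themselves nilpotent by the grading; one packages this via the fact that $\beta$ lies in $\mathfrak{g}_0^{\mathrm{nilp}} \oplus \mathfrak{n}^-$ where $\mathfrak{n}^- = \bigoplus_{\lambda<0}\mathfrak{g}_\lambda$, which is contained in a nilpotent (in fact, in a Borel-type) subalgebra, hence consists of $\ad$-nilpotent elements, and nilpotency of $\beta$ as an element of $\mathfrak{g}$ follows since $S$ is semisimple with integer-spaced (indeed one can rescale to integer) eigenvalues on this subspace. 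The main obstacle is making this last algebraic statement precise and coordinate-independent — in particular checking that "weight $-\lambda\ge 0$ in $\mathcal{O}_{\mathbb{C}^k}$ forces $\lambda\le 0$" interacts correctly with the sign conventions, and handling the possibility that the $\ad_S$-eigenvalues are not integers (only integer-spaced), for which one passes to the appropriate parabolic subalgebra $\mathfrak{p}$ with Levi $\mathfrak{g}_S$ determined by a generic integral functional, observing $\beta_0 \in \mathfrak{n}_{\mathfrak{p}_{G_S}}\subset\mathfrak{g}_S$ nilpotent and $\beta - \beta_0\in\mathfrak{n}_{\mathfrak{p}}$, so $\beta\in\mathfrak{n}_{\mathfrak{p}}$ and is therefore nilpotent.
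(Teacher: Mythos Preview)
Your approach is correct in spirit and genuinely different from the paper's. You argue pointwise in $\mathfrak{g}$: decompose $\beta=\iota_E(\omega)$ along the $\ad_S$-grading, observe that only non-positive integer eigenvalues contribute (since the Euler weights are non-negative integers), identify the constant term $\beta_0$ with $j^*i^*(\omega)\in G_S\ast N_0$, and then conclude nilpotency by a filtration argument. The paper instead works entirely inside the finite-dimensional algebraic group $\Aut(S)$: it takes the Jordan decomposition $\iota_E(\omega)=B_s+B_n$ in $\mathrm{Lie}(\Aut(S))$, uses functoriality of Jordan decomposition under the homomorphism $j^*:\Aut(S)\to G_S$ to get $j^*(B_s)=0$, and then invokes the Levi decomposition $\Aut(S)\cong \Aut_0(S)\rtimes G_S$ (established earlier) to conclude $B_s\in\mathrm{Lie}(\Aut_0(S))$, which is nilpotent since $\Aut_0(S)$ is unipotent, forcing $B_s=0$. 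The paper's proof is three lines because all the structure theory has already been set up; your argument is more self-contained but longer.

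One genuine imprecision to fix: your final sentence claims $\beta\in\mathfrak{n}_{\mathfrak p}$, which is false in general --- $\beta_0$ lies in the Levi $\mathfrak{g}_S$, not in the nilradical. What you actually have is $\beta(z)\in(\mathfrak{g}_S)_{\mathrm{nilp}}+\mathfrak{n}^-$ with $\mathfrak{n}^-=\bigoplus_{n\in\mathbb{Z}_{<0}}\mathfrak{g}_n$, and you need the (true) statement that a nilpotent element of the Levi plus an element of the nilradical is nilpotent. Your filtration sketch proves this: $\ad_{\beta_0}$ is nilpotent on all of $\mathfrak g$ and preserves each $\mathfrak g_\lambda$, while $\ad_{\beta-\beta_0}$ strictly lowers the real part of the $\ad_S$-eigenvalue, so in any word of length $N$ in these two operators either enough lowering occurs to kill everything, or a long enough consecutive run of $\ad_{\beta_0}$ appears. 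Alternatively --- and this is essentially the paper's argument transplanted --- note that the projection $\mathfrak{g}_S\ltimes\mathfrak{n}^-\to\mathfrak{g}_S$ preserves Jordan decomposition, so the semisimple part of $\beta(z)$ lies in $\mathfrak{n}^-$ and is therefore zero.
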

\begin{proof}
For $\omega \in L_{D, \mathfrak{g}, 0}^{1}$, we have $\iota_{E}(\omega) \in U^{0}_{0} = Lie(Aut(S))$. If $\omega \in W(A)$ we have in addition that $j^{*} \iota_{E}(\omega) \in G_{S} \ast N_{0}$, and so is nilpotent. Let $\iota_{E}(\omega) = B_s + B_{n}$ be the Jordan decomposition, where $B_{s}$ is semisimple and $B_{n}$ is nilpotent. Then $j^{*}(B_{s}) = 0$, so that $B_{s} \in Lie(Aut_{0}(S))$. But since $Aut_{0}(S)$ is unipotent, this implies that $B_{s} = 0$, and hence $\iota_{E}(\omega)$ is nilpotent. 
\end{proof}

Second, a derived stack has a tangent complex at every point of its MC locus, and the map $q$ induces a chain map between the tangent complexes:
\[
dq_{w} : \mathbb{T}_{w} [\mathcal{W}(A)/Aut(S)] \to \mathbb{T}_{q(w)} [\mathcal{M}_{D, \mathfrak{g}}(A)/\mathfrak{G}].
\]
We need to show that this is a quasi-isomorphism at each point of the MC locus. We will do this by first constructing an explicit homotopy at the special point $q(0)$ (which is generally not in our space), and then apply the homological perturbation lemma to obtain the quasi-isomorphism at all points. 

\subsection*{The homotopy}
Let $a : L_{D, \mathfrak{g}, 0} \to L_{D, \mathfrak{g}}$ be the inclusion and let $b : L_{D, \mathfrak{g}} \to L_{D, \mathfrak{g}, 0}$ be the projection to the degree $0$ component. Both $a$ and $b$ are chain maps with respect to $\delta_{S}$, but in general only $a$ preserves the Lie bracket. Furthermore, $b \circ a = id_{L_{D, \mathfrak{g}, 0}}$. 

Recall that a given element $\beta \in L_{D, \mathfrak{g}}$ has a Taylor expansion in the eigenvalues of $L_{S}$: 
\[
\beta = \sum_{u} \beta_{u}, 
\]
where each term satisfies $L_{S}(\beta_{u}) = u \beta_{u}$. As we saw, the eigenvalues have the form $u_{i} + \mathbb{Z}_{\geq 0}$ for finitely many complex numbers $u_{i}$. For this reason the series 
\[
\beta' = \sum_{u \neq 0} \frac{1}{u} \beta_{u}
\]
converges to a well-defined element of $L_{D, \mathfrak{g}}$. We use this to define the following degree $-1$ operator
\[
h : L^{i}_{D, \mathfrak{g}} \to L^{i-1}_{D, \mathfrak{g}}, \qquad \sum_{u} \beta_{u} \mapsto \iota_{E}(\sum_{u \neq 0} \frac{1}{u} \beta_{u}). 
\]

The following lemma results from straightforward computation. It has the upshot that $a$ defines a quasi-isomorphism of dgla's from $(L_{D, \mathfrak{g}, 0}, \delta_{S}) $ to $(L_{D, \mathfrak{g}}, \delta_{S}) $.

\begin{lemma} \label{hmtpy}
The operator $h$ defines a homotopy between $ab$ and $id_{L_{D, \mathfrak{g}}}$. In other words, it satisfies 
\[
[\delta_{S}, h] = id_{L_{D, \mathfrak{g}}} - ab. 
\]
Furthermore, it satisfies the `side conditions' $h \circ a = 0$, $b \circ h = 0$ and $h^2 = 0$. Finally, it vanishes on $U$ and sends $I^{i}$ to $U^{i-1}$. 
\end{lemma}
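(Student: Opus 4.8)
The plan is to verify each assertion by a direct computation that exploits the eigenspace decomposition of $L_S$ together with the three commutation relations established earlier, namely $[\iota_E, \delta_S] = L_S$ on $L_{D,\mathfrak{g}}$, $[\iota_E, d] = L_E$, and $[\iota_E, \alpha_0 \mathrm{ad}_S] = \mathrm{ad}_S$. First I would fix $\beta = \sum_u \beta_u$ with $L_S \beta_u = u\beta_u$ and compute $[\delta_S, h]\beta = \delta_S h \beta + h \delta_S \beta$ (with the appropriate Koszul sign for the graded commutator of two odd operators). Since $\delta_S$ commutes with $L_S$, it preserves eigenspaces, so $h\delta_S\beta = \iota_E \sum_{u\neq 0}\tfrac 1u (\delta_S\beta)_u = \iota_E \sum_{u\neq 0}\tfrac1u \delta_S\beta_u$, while $\delta_S h\beta = \delta_S \iota_E \sum_{u\neq 0}\tfrac1u\beta_u$. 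Adding these and using $[\iota_E,\delta_S] = \delta_S\iota_E + \iota_E\delta_S = L_S$ (on each eigenspace $L_S$ acts as multiplication by $u$), the sum over $u\neq 0$ telescopes to $\sum_{u\neq 0}\tfrac1u \cdot u\,\beta_u = \sum_{u\neq 0}\beta_u = \beta - \beta_0$. Since $ab$ is precisely the projection onto the $0$-eigenspace $L_{D,\mathfrak{g},0}$ (as $a$ is the inclusion of that eigenspace and $b$ the projection onto it), we get $[\delta_S,h]\beta = \beta - ab\beta$, which is the homotopy identity.

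Next I would dispatch the side conditions. For $h\circ a = 0$: an element in the image of $a$ lies entirely in the $0$-eigenspace, so its sum over $u\neq 0$ is empty and $h$ kills it. For $b\circ h = 0$: by construction $h\beta = \iota_E\big(\sum_{u\neq 0}\tfrac1u\beta_u\big)$ is a sum of terms in strictly nonzero $L_S$-eigenspaces (as $\iota_E$ commutes with $L_S$, hence preserves eigenvalues), so it has no $0$-eigencomponent and $b$ kills it. For $h^2 = 0$: applying $h$ twice introduces a factor $\iota_E \iota_E = \iota_E^2 = 0$, since the second application of $h$ still begins by acting with $\iota_E$ on something already in the image of $\iota_E$; more carefully, $h^2\beta = \iota_E\sum_{u\neq 0}\tfrac1u\big(\sum_{v\neq 0}\tfrac1v \iota_E\beta_v\big)_u$, and each inner term is in the image of $\iota_E$, so a further $\iota_E$ annihilates it.

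Finally, the two structural statements: $h$ vanishes on $U = \ker(P) = \ker(\iota_E)$ by Lemma \ref{decomposition} (the first thing $h$ does after the rescaling is apply $\iota_E$, which is zero on $U$); and $h$ sends $I^i$ to $U^{i-1}$ because $\iota_E$ maps $I^i$ isomorphically onto $U^{i-1}$ by that same lemma, and the rescaling by $1/u$ preserves the grading and the $I$-component (as $P$ commutes with $L_S$). I do not expect any genuine obstacle here; the only point requiring care is bookkeeping the Koszul signs in the graded commutator $[\delta_S, h]$ and confirming that the telescoping identity $\delta_S\iota_E + \iota_E\delta_S = L_S$ is applied on the nose eigenspace-by-eigenspace — the convergence of $\beta'$ was already justified in the text by the eigenvalues lying in $u_i + \mathbb{Z}_{\geq 0}$ for finitely many $u_i$, so $h$ is genuinely well-defined.
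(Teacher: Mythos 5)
Your computation is correct and is exactly the "straightforward computation" the paper alludes to without writing out: the homotopy identity follows from $\delta_S\iota_E+\iota_E\delta_S=L_S$ applied eigenspace-by-eigenspace (cancelling the $1/u$), the side conditions from the fact that the eigenprojections commute with $P$ and $\iota_E$, and the last two claims from Lemma \ref{decomposition}. No gaps; the only cosmetic point is that the sum does not "telescope" so much as cancel termwise, and in the $h^2=0$ step it is worth saying explicitly that the rescaling preserves $U=\ker(\iota_E)$ because $[P,L_S]=0$, which you do note elsewhere.
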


\subsection*{The perturbation}
We will now perturb the differential $\delta_{S}$ and show that $a$ continues to define a quasi-isomorphism. This is achieved by using the perturbation lemma \cite{crainic2004perturbation}. 

Let $w \in W(A)$ satisfy the Maurer-Cartan equation $\delta_{S}(w) + \frac{1}{2}[w, w] = 0$ and consider the perturbed differential $\delta_{S,w} = \delta_{S} + [w, -]$. This is a differential on $L_{D, \mathfrak{g}}$, and we want an induced perturbation of the homotopy data $(a, b, h, \delta_{S})$ of the previous section. 

\begin{lemma}
The endomorphism $ad_{w} \circ h$ of $L_{D, \mathfrak{g}}$ is nilpotent. 
\end{lemma}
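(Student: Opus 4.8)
The plan is to exploit the grading by eigenvalues of $L_S$, together with the fact that $h$ is built out of $\iota_E$ and lowers a strictly positive amount of ``degree'' in a sense that interacts well with the weight filtration. Concretely, $w$ lies in $W(A) \subset L^1_{D,\mathfrak g,0}$, so $L_S(w) = 0$, while $h$ kills the $0$-eigenspace of $L_S$ and is a combination of $\iota_E$ with the operator $\beta \mapsto \sum_{u\neq 0}\tfrac1u\beta_u$. The key structural input is that the eigenvalues of $L_S$ on $L_{D,\mathfrak g}$ are of the form $u_i + \mathbb Z_{\geq 0}$ for finitely many $u_i$, hence are bounded below; I would use this to show that $ad_w\circ h$ strictly increases a suitable integer-valued filtration, so that a high enough power vanishes.

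First I would set up the relevant filtration. Since $w$ has $L_S$-eigenvalue $0$, the operator $ad_w = [w,-]$ preserves each $L_S$-eigenspace $L_{D,\mathfrak g,u}$. The homotopy $h$ sends the $u$-eigenspace to the $u$-eigenspace (it is $\iota_E$ composed with a rescaling within each eigenspace), and it annihilates $L_{D,\mathfrak g,0}$. So $ad_w \circ h$ also preserves each eigenspace $L_{D,\mathfrak g,u}$, and vanishes on $L_{D,\mathfrak g,0}$. This already reduces the problem to showing nilpotency on each fixed eigenspace $L_{D,\mathfrak g,u}$ with $u \neq 0$ — but each such eigenspace is finite-dimensional, so nilpotency of an endomorphism there is equivalent to the statement that it has no nonzero eigenvalue, equivalently that it is not invertible in any $ad_w\circ h$-invariant subquotient. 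Rather than argue eigenvalue by eigenvalue, I would instead introduce the coarser filtration by \emph{form degree} (the grading of $\Omega^\bullet_{\mathbb C^k}(\log D)$): $ad_w$ raises form degree by $1$ (as $w$ is a $1$-form with values in $\mathfrak g$), while $h$, being essentially $\iota_E$, lowers form degree by $1$; hence $ad_w\circ h$ preserves form degree. This is not yet enough, so the real filtration to use is a combination: on the finite-dimensional eigenspace $L_{D,\mathfrak g,u}$, decompose further by the $L_E$-weight (equivalently the $ad_S$-eigenvalue, since $L_S = L_E + ad_S$ and $u$ is fixed). I expect that $ad_w\circ h$, applied to a homogeneous element, produces a sum of terms of strictly larger $L_E$-weight, because $\iota_E$ followed by the rescaling $\tfrac1u$ does not change weight but $ad_w$ does not lower it and the form-degree drop forces a genuine weight increase — combined with the lower bound on weights, a suitable power must vanish.

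The cleanest route, and the one I would write up, is: restrict attention to the finite-dimensional space $V_u := L_{D,\mathfrak g,u}$ for a single eigenvalue $u\neq 0$; observe $N := ad_w\circ h$ maps $V_u$ to itself; and show $N$ is nilpotent by producing an explicit exhaustive finite filtration $0 = F_0 \subset F_1 \subset \cdots \subset F_m = V_u$ with $N(F_j)\subseteq F_{j-1}$. The filtration should be by form degree: writing $V_u^{(p)}$ for the part of form degree $\leq p$, we have $h(V_u^{(p)}) \subseteq V_u^{(p-1)}$ (since $h = \iota_E\circ(\text{rescale})$ lowers form degree by one), and $ad_w$ raises it by at most one, so $N(V_u^{(p)}) \subseteq V_u^{(p)}$ — only weakly decreasing. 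To upgrade ``weakly'' to ``strictly'' I would note that on the subspace $U\subset L_{D,\mathfrak g}$ the operator $h$ vanishes (Lemma \ref{hmtpy}), that $h$ sends $I^i$ into $U^{i-1}$, and that the image of $ad_w\circ h$ therefore lands in $U$ at one stage and is killed by the next application of $h$; iterating, $N^2$ already has image in $U$ where $h=0$, so in fact $N^2 = ad_w\circ(h\circ ad_w)\circ h$ and one more application gives $N^3 = 0$ once one checks $h\circ ad_w\circ h = $ (something landing in $\ker h$). I expect the honest statement to be that $N$ is nilpotent of a small explicit order, provable from $h^2 = 0$, $h|_U = 0$, and $h(I)\subseteq U$ together with the observation that $ad_w$ maps $U$ into $I \oplus U$ in a controlled way.

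The main obstacle is pinning down exactly how $ad_w$ interacts with the $U \oplus I$ decomposition: $w\in U^1_0$ by Lemma \ref{gaugeaction}, but $[w,-]$ need not preserve $U$ (only $\delta_S$-type statements were checked on the subalgebra $L_{D,\mathfrak g,0}$), so I must track the $I$-component of $[w, x]$ for $x\in U$ and for $x\in I$, and verify that $h$ of that component either vanishes or drops into a strictly smaller piece of whatever filtration I commit to. Once that bookkeeping is done the nilpotency is immediate and the perturbation lemma of \cite{crainic2004perturbation} applies to the data $(a,b,h,\delta_S)$ perturbed by $ad_w$, yielding the quasi-isomorphism $a$ for $\delta_{S,w}$ at the point $w$, which is the next step of the overall argument.
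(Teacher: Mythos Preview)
There is a genuine gap: you never use the nilpotency of $\iota_E(w)$ (Lemma~\ref{AutomaticNilpotency}), and without it the statement is simply false. Write $w = \gamma + \alpha_0 N$ with $\gamma \in U^1_0$ and $N \in U^0_0$; your claim that $w \in U^1_0$ (citing Lemma~\ref{gaugeaction}) is incorrect, since $W(A) \subset L^1_{D,\mathfrak g,0} = U^1_0 \oplus I^1_0$ and the $I$-component $\alpha_0 N$ is generically nonzero. Now take the toy case $\gamma = 0$, so $w = \alpha_0 N$ with $N$ a constant element of $\mathfrak g_S$. Then for $x = \alpha_0 g$ in an $L_S$-eigenspace of eigenvalue $u \neq 0$ one computes $(ad_w \circ h)^k(x) = u^{-k}\alpha_0\,(ad_N)^k g$, which stays in the \emph{same} eigenspace and the \emph{same} $L_E$-weight and the \emph{same} form degree. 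If $N$ were semisimple and nonzero this would never vanish. So no filtration built only out of $L_S$-eigenvalue, $L_E$-weight, form degree, or the $U/I$ splitting can force nilpotency; the hypothesis $w \in W(A)$, which via Lemma~\ref{AutomaticNilpotency} gives $N$ nilpotent, is essential and must appear in the argument.

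The paper's proof uses exactly the splitting $w = \gamma + \alpha_0 N$. The $\gamma$-part is handled by your observation: since $\gamma \in U$ and $U$ is a subalgebra, $ad_\gamma$ preserves $U$; combined with $h(I) \subseteq U$ and $h|_U = 0$ one gets $h \circ ad_\gamma \circ h = 0$, so in the expansion of $(ad_w \circ h)^k$ only the $\alpha_0 ad_N$ contribution survives between successive $h$'s. This reduces to showing $(h \circ \alpha_0 ad_N)^k = 0$ for large $k$, and here one uses that $ad_N$ commutes with $h$ and with multiplication by $\alpha_0$ (since $N \in U^0_0$), giving $(h \circ \alpha_0 ad_N)^k = (ad_N)^k \circ \tilde h^k$ with $\tilde h(\beta) = h(\alpha_0 \wedge \beta)$, which vanishes once $k$ exceeds the nilpotency degree of $N$. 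Your $U/I$ bookkeeping was on the right track for the $\gamma$-piece, but you need to bring in Lemma~\ref{AutomaticNilpotency} for the $\alpha_0 N$-piece; there is no way around it.
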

\begin{proof}
The element $w$ can be decomposed as $w = \gamma + \alpha_{0} N$, where $\gamma \in U^{1}_{0}$ and $N \in U^{0}_{0}$. By Lemma \ref{AutomaticNilpotency}, $N$ is nilpotent. Recall from Lemma \ref{hmtpy} that $h$ vanishes on $U$ and its image is contained in $U$. Furthermore, since $U$ is a subalgebra of $L_{D, \mathfrak{g}}$, it is preserved by $ad_{\gamma}$. As a result $h \circ ad_{\gamma} \circ h = 0$. Hence, it suffices to show that the operator $\alpha_{0} ad_{N} \circ h$ is nilpotent. 

Now note that $ad_{N}$ and multiplication by $\alpha_{0}$ commute. Since $N \in U^{0}_{0}$, $ad_{N}$ also commutes with $h$. This implies that 
\[
(h \circ \alpha_{0} ad_{N} )^{k} = (ad_{N})^{k} \circ \tilde{h}^{k},
\]
where $\tilde{h}$ is the operator $\tilde{h}(\beta) = h(\alpha_{0} \wedge \beta)$. But this will vanish for large enough $k$ since $N$ is nilpotent. 
\end{proof}

The upshot of this lemma is that we can now define the following perturbed maps: 
\begin{align*}
h' &= \sum_{p = 0}^{\infty} (-h  ad_{w})^{p} h, \\ 
\delta' &= \delta_{S} + \sum_{p = 0}^{\infty} b (-ad_{w}  h)^{p} ad_{w} a, \\ 
a' &= \sum_{p = 0}^{\infty} (-h  ad_{w})^{p} a, \\ 
b' &= \sum_{p=0}^{\infty} b (-ad_{w}  h)^{p}. 
\end{align*}

The perturbation lemma says that $\delta'$ defines a differential on $L_{D, \mathfrak{g}, 0}$, that $a'$ and $b'$ define chain maps between $(L_{D, \mathfrak{g}, 0}, \delta')$ and $(L_{D, \mathfrak{g}}, \delta_{S,w})$, and that the following equations are satisfied: 
\[
b' \circ a' = id_{L_{D, \mathfrak{g}, 0}}, \ \  [\delta_{S,w}, h'] = id_{L_{D, \mathfrak{g}}} - a' \circ b', \ \ h' \circ a' = 0, \ \ b' \circ h' = 0, \ \ h' \circ h' = 0. 
\]

The following lemma identifies the perturbations.  
\begin{lemma} \label{perthmtpy}
The perturbations are given by 
\[
a' = a, \ \ b' = b, \ \ \delta' = \delta_{S,w}.
\]
In particular, the inclusion $a : (L_{D, \mathfrak{g}, 0}, \delta_{S,w}) \to (L_{D, \mathfrak{g}}, \delta_{S,w})$ is a quasi-isomorphism. Furthermore, $h'$ vanishes on $U$ and sends $I$ to $U$. 
\end{lemma}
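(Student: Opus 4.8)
The plan is to show that the infinite series defining the perturbed data all collapse to either zero or the original operator, by exploiting the two key structural facts established earlier: that $h$ vanishes on $U$ with image in $U$ (Lemma \ref{hmtpy}), and that $w = \gamma + \alpha_0 N$ with $\gamma, N \in U_0$ (the decomposition used in the preceding lemma). The decisive observation is that $ad_w$ does \emph{not} preserve $U$ in general, but the only way it escapes $U$ is through the $\alpha_0 N$ term: for $x \in U$, one has $ad_\gamma(x) \in U$ since $U$ is a subalgebra, while $[\alpha_0 N, x] = \alpha_0[N,x] \in I$ (using that $\alpha_0 \wedge U \subseteq I$, which is exactly the content of Lemma \ref{decomposition}). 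Meanwhile $h$ kills $U$. So in any composite of the form $h\, ad_w\, h\, ad_w \cdots h\, ad_w\, a$ or $h\, ad_w\, h\, ad_w \cdots h$, the rightmost $h$ (or $a$) lands in $U$, then $ad_w$ applied to something in $U$ lands in $U \oplus I$, and we must track which part survives the next $h$. Since $h$ vanishes on $U$, only the $I$-component matters, and that component is $\alpha_0[N, -]$; but then the next $h\, ad_w$ again produces something whose only $h$-relevant part comes from the $N$-term, and iterating, each application of $h\, ad_w$ strictly increases the power of $ad_N$ being applied (as in the nilpotency argument of the previous lemma). Combined with nilpotency of $N$, this forces all the higher terms $(p \geq 1)$ to vanish.

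Concretely, I would first prove $a' = a$: the $p$-th term is $(-h\, ad_w)^p a$, and since $a$ has image in $L_{D,\mathfrak{g},0} = U_0 \oplus I_0 \subseteq U \oplus I$, I claim $(h\, ad_w)^p a = 0$ for $p \geq 1$. For $p = 1$: $h\, ad_w\, a(x)$ for $x \in L_{D,\mathfrak{g},0}$; write $x = x_U + x_I$; then $ad_w(x) = [\gamma, x] + \alpha_0[N, x]$, and $h$ annihilates the $U$-part $[\gamma, x_U]$ plus whatever of $[\gamma, x_I]$, $\alpha_0[N,x]$ lands in $U$ — but more carefully, $h$ applied to $ad_w(x)$ picks out $\iota_E$ of the nonzero-eigenvalue components, and on $L_{D,\mathfrak{g},0}$ the eigenvalue is $0$, so actually $h$ vanishes identically on $L_{D,\mathfrak{g},0}$ once we note $ad_w$ preserves the zero eigenspace of $L_S$ (because $w \in W(A) \subseteq L^1_{D,\mathfrak{g},0}$ and $L_S$ is a derivation). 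That is the cleanest route: $ad_w$ preserves $L_{D,\mathfrak{g},0}$, and $h$ restricted to $L_{D,\mathfrak{g},0}$ is zero since every element there has $L_S$-eigenvalue $0$. Hence $(h\, ad_w)^p a = 0$ for all $p \geq 1$, giving $a' = a$, and the identical argument gives $\delta' = \delta_S + b\, ad_w\, a = \delta_S + [w,-] = \delta_{S,w}$, since all higher terms $b(-ad_w h)^p ad_w a$ with $p \geq 1$ contain an internal $h$ acting on something in $L_{D,\mathfrak{g},0}$.

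For $b' = b$: the $p$-th term is $b(-ad_w h)^p$, and here the argument is subtler because the leftmost $h$ acts on all of $L_{D,\mathfrak{g}}$, not just the zero eigenspace. Here I would use the structural argument: $h$ has image in $U$, then $ad_w$ sends $U$ into $U \oplus I$ with $I$-part equal to $\alpha_0[N,-]$, then the next $h$ sees only that $I$-part and applies $\iota_E$, effectively producing $[N,-] \circ \tilde h$ as in the prior lemma, so $(ad_w h)^p$ for $p \geq 2$ involves $(ad_N)^{p-1}$ up to lower-order bookkeeping and vanishes for large $p$; but I need it to vanish for \emph{all} $p \geq 1$, so I must check $p=1$ separately, i.e. $b\, ad_w\, h = 0$. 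Since $h$ has image in $U$, and $b$ is projection onto $L_{D,\mathfrak{g},0}$, I need the $L_{D,\mathfrak{g},0}$-component of $ad_w(U)$ to vanish; $ad_w = ad_\gamma + \alpha_0 ad_N$, and $ad_\gamma(U) \subseteq U$ — but does $ad_\gamma$ send $\ker(L_S)$-complement parts of $U$ out of $L_{D,\mathfrak{g},0}$? Since $\gamma$ has $L_S$-eigenvalue $0$, $ad_\gamma$ preserves eigenspaces, so $ad_\gamma(U)$ contributes to $L_{D,\mathfrak{g},0}$ only from $U_0 = U \cap L_{D,\mathfrak{g},0}$; so $b\, ad_\gamma h = b\, ad_\gamma|_{U_0} h$, which need not vanish. \textbf{This is the main obstacle}: handling $b'$ requires showing $h$ actually has image in the \emph{nonzero} eigenspaces (equivalently, $L_S h = $ invertible-on-image), which follows because $h(\beta) = \iota_E(\sum_{u \neq 0} u^{-1}\beta_u)$ visibly lands in $\bigoplus_{u \neq 0}$-eigenspaces (as $\iota_E$ commutes with $L_S$); then $b$, being projection onto the $u=0$ part, kills the image of $h$ outright — wait, that would give $b\circ h = 0$, a side condition we already have, not $b\, ad_w\, h = 0$. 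So the real point: $b\, ad_w\, h = b\, ad_w\, h$ where $h$-image lies in nonzero eigenspaces of $L_S$ \emph{and} in $U$; $ad_w$ preserves eigenspaces (since $w \in L_{D,\mathfrak{g},0}$), so $ad_w h$ still lands in nonzero eigenspaces, so $b\, ad_w\, h = 0$ because $b$ projects onto the zero eigenspace. The same reasoning kills every $p \geq 1$ term (the outermost operator before $b$ is always $ad_w h$, landing in nonzero eigenspaces), giving $b' = b$. Finally, $h' = \sum (-h\, ad_w)^p h$ has image in $U$ (each term ends in $h$, image in $U$, and $h\, ad_w$ preserves... no: $ad_w$ may push out of $U$, but the next $h$ pulls back into $U$) and vanishes on $U$ (each term starts with $h$ on the left, which kills $U$ — except the outer structure: $h'$ applied to $x \in U$ is $\sum (-h\,ad_w)^p h(x) = \sum(-h\,ad_w)^p \cdot 0 = 0$). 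I would close by recording that $h'$ sends $I$ to $U$ since $h$ does and each intermediate application returns to the $h$-domain, and note the quasi-isomorphism statement is immediate from $a' = a$ being a chain map for $\delta_{S,w}$ with homotopy inverse $b$ and contracting homotopy $h'$.
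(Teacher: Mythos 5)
Your final argument is correct and is essentially the paper's proof: both rest on the observation that $w$ lies in the zero eigenspace of $L_S$, so $\mathrm{ad}_w$ preserves the eigenspace decomposition and hence commutes with the inclusion $a$ and the projection $b$, whence the side conditions $h\circ a=0$ and $b\circ h=0$ force $h\,\mathrm{ad}_w\,a=0$ and $b\,\mathrm{ad}_w\,h=0$, killing every higher term in the series; the statement about $h'$ follows, as you say, because each summand begins and ends with $h$. The detour through the $U\oplus I$ decomposition and the nilpotency of $N$ is unnecessary (and, as you noticed, only yields vanishing for large $p$), so the clean eigenvalue argument you settle on is the one to keep.
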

\begin{proof}
The element $w \in L_{D, \mathfrak{g}, 0}^{1}$ and so $ad_{w}$ restricts to $L_{D, \mathfrak{g}, 0}$ and commutes with both $a$ and $b$. As a result of this and the side conditions of Lemma \ref{hmtpy}, we have that $h ad_{w}a = 0$ and $b ad_{w} h = 0$. Plugging this into the definitions of the deformed maps gives 
\begin{align*}
a' &= a - \sum_{p \geq 0}(-h ad_{w})^p (h ad_{w} a) = a, \\
\delta' &= \delta_{S} + b ad_{w} a - \sum_{p \geq 0} b (-ad_{w} h)^{p-1} ad_{w} (h ad_{w} a) = \delta_{S,w}, \\
b' &= b - \sum_{p \geq 0} (b ad_{w} h) (-ad_{w} h)^p = b. 
\end{align*}
The statement about $h'$ follows from Lemma \ref{hmtpy} and the fact that each term in the definition of $h'$ starts and ends with $h$. 
\end{proof}

\subsection*{The quasi-isomorphism of tangent complexes}
Consider a point $w \in MC(\mathcal{W}(A))$. It has the form $w = \gamma + \alpha_{0} N$, where $\gamma \in U^{1}_{0}$, $N \in U^{0}_{0}$, and $j^* i^*(w) = N(0) \in G_{S} \ast N_{0}$. It has corresponding point $q(w) \in MC(\mathcal{M}_{D, \mathfrak{g}}(A))$. In this section we will describe the morphism of tangent complexes $dq_{w} : \mathbb{T}_{w} [\mathcal{W}(A)/Aut(S)] \to \mathbb{T}_{q(w)} [\mathcal{M}_{D, \mathfrak{g}}(A)/\mathfrak{G}]$ and show that it is a quasi-isomorphism. 

We start by describing the tangent complexes. First, the tangent complex of $[\mathcal{M}_{D, \mathfrak{g}}(A)/\mathfrak{G}]$ is given as follows: 
\[
 \mathbb{T}_{q(w)} [\mathcal{M}_{D, \mathfrak{g}}(A)/\mathfrak{G}] = L^{0}_{D, \mathfrak{g}} \to T_{q(w)}M(A) \to L^{2}_{D, \mathfrak{g}} \to L^{3}_{D, \mathfrak{g}} \to ... 
\]
Note that the first term is $L^{0}_{D, \mathfrak{g}}  = Lie(\mathfrak{G})$, and the second term is the subspace 
\[
T_{q(w)}M(A) = \{ v \in L^{1}_{D, \mathfrak{g}} \ | \ j^* i^*(v) \in T_{(S + N(0))}O_{A} \}, 
\]
where we use the fact that $j^* i^* (q(w)) = S + N(0)$. The first map is the derivative of the gauge action, and a computation shows that it is equal to $-\delta_{S,w}$. The minus sign is due to the fact that we are making the gauge group act on the left. For simplicity we will replace this by $\delta_{S,w}$, since it does not affect the cohomology. The second map is the derivative of the curvature $dF$, and a calculation shows that it is given by $\delta_{S,w}$. Finally, all higher maps are given by $\delta_{q(w)} = \delta_{S, w}$. Therefore, the tangent complex is a subcomplex of $(L_{D, \mathfrak{g}}, \delta_{S,w})$. 

The tangent complex of $[\mathcal{W}(A)/Aut(S)]$ has a similar descriptions. It is given by 
\[
\mathbb{T}_{w} [\mathcal{W}(A)/Aut(S)] = L^{0}_{D, \mathfrak{g}, 0} \to T_{w}W(A) \to L^{2}_{D, \mathfrak{g}, 0} \to L^{2}_{D, \mathfrak{g}, 0} \to ...
\]
As above, $L^{0}_{D, \mathfrak{g}, 0} = Lie(Aut(S))$ and the second term is the subspace 
\[
 T_{w}W(A) = \{ v \in  L^{1}_{D, \mathfrak{g}, 0} \ | \ j^* i^*(v) \in T_{N(0)}(G_{S} \ast N_{0}) \}. 
\]
Again all maps are given by $\delta_{S,w}$ (the first map has a minus sign, which we remove for simplicity). Hence, the tangent complex is a subcomplex of $(L_{D, \mathfrak{g}, 0}, \delta_{S,w})$. 

The map $dq_{w}$ is easily seen to coincide with $a$. Therefore, in order to prove that $dq_{w}$ is a quasi-isomorphism, it suffices to show that the homotopy data $(a, b, h', \delta_{S,w})$ restricts to the tangent complexes. 

\begin{lemma}
The maps $(a, b, h', \delta_{S,w})$ restrict to $\mathbb{T}_{q(w)} [\mathcal{M}_{D, \mathfrak{g}}(A)/\mathfrak{G}]$ and $\mathbb{T}_{w} [\mathcal{W}(A)/Aut(S)]$. Therefore, $dq_{w}$ defines a quasi-isomorphism. 
\end{lemma}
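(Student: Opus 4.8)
The plan is to deduce the lemma by restricting, to the two tangent complexes, the special deformation retraction $(a,b,h',\delta_{S,w})$ contracting $(L_{D,\mathfrak{g}},\delta_{S,w})$ onto $(L_{D,\mathfrak{g},0},\delta_{S,w})$ that was produced by the perturbation lemma and identified in Lemma \ref{perthmtpy}. The key observation making this work is that the structural identities $b\circ a=\mathrm{id}$, $[\delta_{S,w},h']=\mathrm{id}-a\circ b$, $h'\circ a=0$, $b\circ h'=0$ and $h'\circ h'=0$ are identities of maps on the ambient complexes, so they automatically persist upon restriction to any subcomplexes that $a$, $b$ and $h'$ happen to preserve. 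Consequently, once I verify that $a$ and $b$ carry $\mathbb{T}_{w}[\mathcal{W}(A)/Aut(S)]$ and $\mathbb{T}_{q(w)}[\mathcal{M}_{D,\mathfrak{g}}(A)/\mathfrak{G}]$ into one another and that $h'$ preserves $\mathbb{T}_{q(w)}[\mathcal{M}_{D,\mathfrak{g}}(A)/\mathfrak{G}]$, the restriction of $a$ — which is $dq_{w}$ — is a chain homotopy equivalence and hence a quasi-isomorphism, proving the statement.

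The first reduction is to note that, as subcomplexes of the full complexes, the tangent complexes differ only in degree $1$, where they are $T_{q(w)}M(A)$ and $T_{w}W(A)$; in degree $0$ and in all degrees $\geq 2$ they coincide with the full complexes. So the only nontrivial checks are $a$ and $b$ in degree $1$ and $h'$ on $L^{2}_{D,\mathfrak{g}}$, the single place where $h'$ lands in a degree-$1$ term. The common computational input is that $j^{*}i^{*}(v)=(\iota_{E}v)(0)$ for $v\in L^{1}_{D,\mathfrak{g}}$, and that, relative to the $ad_{S}$-eigenspace decomposition $\mathfrak{g}=\bigoplus_{\mu}\mathfrak{g}_{\mu}$ (with $\mathfrak{g}_{0}=\mathfrak{g}_{S}$), the $\mathfrak{g}_{0}$-component of $(\iota_{E}v)(0)$ equals $j^{*}i^{*}(b(v))$, since $b$ projects onto the $L_{S}$-eigenvalue-$0$ part, $\iota_{E}$ commutes with $L_{S}$, and evaluation at the origin annihilates positive weights. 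Granting this, the check for $a$ is immediate: for $v\in T_{w}W(A)$ one has $j^{*}i^{*}(v)\in T_{N(0)}(G_{S}\ast N_{0})=[\mathfrak{g}_{S},N(0)]=[\mathfrak{g}_{S},S+N(0)]\subseteq[\mathfrak{g},S+N(0)]=T_{S+N(0)}O_{A}$. The check for $h'$ is also immediate: by Lemma \ref{perthmtpy}, $h'$ vanishes on $U$ and maps $I$ into $U$, so $h'(L^{2}_{D,\mathfrak{g}})\subseteq U^{1}$, where $\iota_{E}$ vanishes, whence $j^{*}i^{*}(h'\beta)=0\in T_{S+N(0)}O_{A}$; in all other degrees $h'$ trivially preserves the subcomplex.

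The remaining point — and the one I expect to be the \emph{main obstacle} — is that $b$ restricts, i.e.\ that for $v\in T_{q(w)}M(A)$ one has $j^{*}i^{*}(b(v))\in[\mathfrak{g}_{S},N(0)]$. By the identification of $\mathfrak{g}_{0}$-components above, this reduces to showing that the $\mathfrak{g}_{0}$-graded piece of the subspace $[\mathfrak{g},S+N(0)]\subseteq\mathfrak{g}$ is exactly $[\mathfrak{g}_{S},N(0)]$. The argument here uses the nilpotency of the residue $N(0)$, which is the genuinely non-formal ingredient and is supplied by Lemma \ref{AutomaticNilpotency}: since $N(0)\in\mathfrak{g}_{S}$, the subspace $[\mathfrak{g},S+N(0)]$ is $ad_{S}$-invariant, and a short computation shows its $\mathfrak{g}_{\mu}$-graded piece is $(ad_{N(0)}-\mu)\,\mathfrak{g}_{\mu}$; as $ad_{N(0)}$ is nilpotent, $ad_{N(0)}-\mu$ is invertible on $\mathfrak{g}_{\mu}$ for $\mu\neq 0$, while for $\mu=0$ it produces $[\mathfrak{g}_{S},N(0)]$. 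Hence $[\mathfrak{g},S+N(0)]=[\mathfrak{g}_{S},N(0)]\oplus\bigoplus_{\mu\neq 0}\mathfrak{g}_{\mu}$, so the $\mathfrak{g}_{0}$-component $j^{*}i^{*}(b(v))$ of $j^{*}i^{*}(v)\in[\mathfrak{g},S+N(0)]$ lies in $[\mathfrak{g}_{S},N(0)]=T_{N(0)}(G_{S}\ast N_{0})$, giving $b(v)\in T_{w}W(A)$. Combining the three checks with the automatic persistence of the retraction identities then yields the claimed quasi-isomorphism.
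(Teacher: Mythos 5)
Your proposal is correct and follows essentially the same route as the paper: reduce to degrees $0,1,2$, note that $a$ and $h'$ restrict for the obvious reasons (the latter because $\operatorname{Im}(h')\subseteq U$, where $\iota_{E}$ vanishes), and isolate the restriction of $b$ as the only substantive point, handled via the $ad_{S}$-eigenspace decomposition together with the nilpotency of $N(0)$. Your treatment of the $b$-step is a mild strengthening of the paper's (you identify the full graded decomposition $\operatorname{Im}(ad_{S+N(0)})=[\mathfrak{g}_{S},N(0)]\oplus\bigoplus_{\mu\neq 0}\mathfrak{g}_{\mu}$ using invertibility of $ad_{N(0)}\pm\mu$ on $\mathfrak{g}_{\mu}$, whereas the paper only matches the $\mu=0$ components of $Z$), but the underlying mechanism is identical.
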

\begin{proof}
Since the complexes are modified in degree $1$, it suffices to restrict our attention to degrees $0, 1, 2$. The above description of the tangent complexes and $dq_{w}$ immediately implies that $a$ and $\delta_{S, w}$ restrict. To check that $h'$ restricts, we only need to show that $h'(L^{2}_{D, \mathfrak{g}})$ is contained in $T_{q(w)}M(A)$. But this follows because, by Lemma \ref{perthmtpy}, the image of $h'$ is contained in $U$. 

For the map $b$, consider a point $\beta \in T_{q(w)}M(A)$. This can be expanded as $\beta = \sum_{u} \beta_{u}$, where each term satisfies $L_{S}(\beta_{u}) = u \beta_{u}$. By definition $b(\beta) = \beta_{0}$. Hence, we need to check that if $j^* i^* (\beta) \in T_{(S + N(0))}O_{A}$, then $j^* i^* (\beta_{0}) \in T_{N(0)}(G_{S} \ast N_{0})$. These tangent spaces have the following descriptions
\[
T_{(S + N(0))}O_{A} = Im( ad_{S + N(0)} : \mathfrak{g} \to \mathfrak{g}), \qquad T_{N(0)}(G_{S} \ast N_{0}) = Im( ad_{N(0)} : \mathfrak{g}_{S} \to \mathfrak{g}_{S}).
\]
Now using the eigenvector expansion, we have 
\[
j^* i^* (\beta) = \sum_{u} j^{*} i^{*}(\beta_{u}) = ad_{S + N(0)}(Z), 
\]
for some $Z \in \mathfrak{g}$. One can check that each term in the summand satisfies $ad_{S}( j^{*} i^{*}(\beta_{u}) ) = u  j^{*} i^{*}(\beta_{u}) $. Since $ad_{S} : \mathfrak{g} \to \mathfrak{g}$ is diagonalizable, we can decompose $Z$ into eigenvectors as well: $Z = \sum_{u} Z_{u}$. And since $ad_{S + N(0)}$ commutes with $ad_{S}$, it preserves the eigenspaces. Hence, we can match up the eigenvectors to get 
\[
j^* i^* \beta_{0} = ad_{S + N(0)}(Z_{0}) = ad_{N(0)}(Z_{0}), 
\]
where $Z_{0} \in \mathfrak{g}_{S}$. 
\end{proof}

\section{Plane curves $x^{p} - y^{q}$} \label{planecurvesection}
In this section we give a detailed study of the case of plane curves. Consider \[ f = x^p - y^q : \mathbb{C}^2 \to \mathbb{C}, \] where $p$ and $q$ are relatively prime positive integers satisfying $p < q$. This function is weighted homogeneous of degree $qp$ for the Euler vector field $E = qx \partial_{x} + py \partial_{y}$, which defines the weight grading on coordinates $|x| = q$ and $|y| = p$. The logarithmic tangent bundle is generated by the vector fields $E$ and $V = qy^{q-1} \partial_{x} + px^{p-1}\partial_{y}$, which satisfy $[E,V] = (qp - p - q)V$. Let $w_{0} = qp - p - q$. The logarithmic $1$-form $\alpha_{0} = \frac{1}{qp}d \log f$ pairs with $V$ to give $0$. Therefore, it can be completed to a dual basis $\alpha_{0}, \beta$ of the logarithmic cotangent bundle. The form $\alpha_{0}$ is closed, and $\beta$ satisfies $d \beta = (p + q - qp) \alpha_{0} \wedge \beta$. 

\subsection*{Cohomology of $V$}
Let $\mathcal{O}_{w}$ denote the subspace of polynomial functions with weight $w$ with respect to $E$. Note that any integer $w \in \mathbb{Z}$ has a unique decomposition $w = aq + bp + cqp$, where $a, b, c \in \mathbb{Z}$, $0 \leq a < p$ and $0 \leq b < q$. This decomposition provides a useful way of indexing the weights because of the following lemma. 

\begin{lemma} \label{weightspacedecompV}
Let $w = aq + bp + cqp$, with the above restrictions on $a, b, c$. The dimension of $\mathcal{O}_{w}$ is $\max(c,-1) + 1$, and a basis is given by 
\[
x^{a + cp}y^{b}, x^{a + (c-1)p}y^{b+q}, ..., x^{a}y^{b + cq}. 
\]
\end{lemma}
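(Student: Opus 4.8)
The plan is to reduce the statement to an elementary count of lattice points. Since $\mathcal{O}_w$ is spanned by the monomials $x^m y^n$, and such a monomial has weight $mq + np$ under $E = qx\partial_x + py\partial_y$, the space $\mathcal{O}_w$ has as a basis the set of monomials $x^m y^n$ with $m, n \geq 0$ and $mq + np = w$. So the whole lemma amounts to enumerating the non-negative integer solutions of $mq + np = w$, and reading off the exponents.

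First I would parametrize all integer solutions, without yet requiring non-negativity. Writing $w = aq + bp + cqp$ and subtracting from $mq + np = w$, the equation becomes $(m - a) q = (b + cq - n) p$. Because $\gcd(p,q) = 1$, this forces $p \mid (m - a)$; setting $m = a + sp$ for $s \in \mathbb{Z}$ and cancelling the factor $p$ gives $n = b + (c - s) q$. Thus the integer solutions of $mq+np=w$ are exactly $(m, n) = (a + sp,\ b + (c-s)q)$ for $s \in \mathbb{Z}$.

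Next I would impose $m \geq 0$ and $n \geq 0$, using the normalization $0 \leq a < p$ and $0 \leq b < q$. From $0 \leq a < p$ one checks that $a + sp \geq 0$ holds precisely when $s \geq 0$, and from $0 \leq b < q$ one checks that $b + (c-s)q \geq 0$ holds precisely when $s \leq c$. Hence the non-negative solutions are those with $0 \leq s \leq c$: there are $c + 1$ of them when $c \geq 0$ and none when $c < 0$, that is, $\max(c, -1) + 1$ in all cases. Listing the corresponding monomials $x^{a + sp} y^{b + (c-s)q}$ for $s = c, c-1, \ldots, 0$ yields exactly the claimed basis $x^{a+cp}y^b, x^{a+(c-1)p}y^{b+q}, \ldots, x^{a}y^{b+cq}$.

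There is no serious obstacle here; the only point needing a little care is the bookkeeping with the inequalities $0 \leq a < p$ and $0 \leq b < q$ that pins down the exact range $0 \leq s \leq c$, together with (if one wants the argument to be self-contained) a one-line Bézout argument that the decomposition $w = aq + bp + cqp$ with these constraints exists and is unique.
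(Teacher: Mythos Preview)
Your proof is correct. The paper actually states this lemma without proof, treating it as an elementary observation; your argument---parametrizing the integer solutions of $mq+np=w$ via $\gcd(p,q)=1$ and then cutting down to non-negative solutions using $0\le a<p$, $0\le b<q$---is exactly the natural way to fill in the details, and there is nothing to compare against.
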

The vector field $V$ has weight $w_{0}$, and hence it defines a map \[
V : \mathcal{O}_{w} \to \mathcal{O}_{w + w_{0}}.
\]
\begin{lemma} \label{kernelofV}
The kernel of $V$ is $\mathbb{C}[f]$. 
\end{lemma}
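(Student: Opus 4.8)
The plan is to combine the explicit weight-space basis of Lemma \ref{weightspacedecompV} with the homogeneity of $V$. First I would observe that $V$ has weight $w_0 = qp - p - q$, so it respects the weight grading, and therefore $\ker V = \bigoplus_{w} \ker(V \colon \mathcal{O}_w \to \mathcal{O}_{w+w_0})$. Since $f = x^p - y^q$ has weight $qp$ and $V(f) = qp x^{p-1}y^{q-1} - pq y^{q-1}x^{p-1} = 0$, every power $f^c$ lies in the kernel, and $f^c \in \mathcal{O}_{cqp}$. So $\mathbb{C}[f] \subseteq \ker V$, and on the weight-$cqp$ piece (i.e.\ $a = b = 0$, weight $cqp$ with $c \geq 0$) it contributes a one-dimensional subspace spanned by $f^c$. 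It remains to show this exhausts the kernel.

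Next I would compute the action of $V$ on the monomial basis from Lemma \ref{weightspacedecompV}. Fix a weight $w = aq + bp + cqp$ with $0 \le a < p$, $0 \le b < q$, $c \geq 0$ (if $c \leq -1$ the space $\mathcal{O}_w$ is zero and there is nothing to check). The basis monomials are $m_j = x^{a + (c-j)p} y^{b + jq}$ for $j = 0, \dots, c$. Applying $V = qy^{q-1}\partial_x + px^{p-1}\partial_y$ gives
\[
V(m_j) = q(a + (c-j)p)\, x^{a + (c-j)p - 1} y^{b + jq + q - 1} + p(b + jq)\, x^{a + (c-j)p + p - 1} y^{b + jq - 1}.
\]
The first term has $x$-exponent $a + (c-j)p - 1$ and the second has $x$-exponent $a + (c - j + 1)p - 1$; modulo $p$ these are $a - 1$ and $a - 1$ as well, consistent with landing in $\mathcal{O}_{w + w_0}$. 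The key point is to express $V(m_j)$ in the monomial basis of $\mathcal{O}_{w+w_0}$ and read off the matrix of $V$ between these two spaces of dimensions $c+1$ and (generically) $c$ or $c+1$; it will be essentially bidiagonal. Then $\ker V$ on $\mathcal{O}_w$ is at most one-dimensional, and it is nonzero precisely when the coefficients conspire — which happens exactly for the weights $w = cqp$, where the kernel is spanned by $f^c$. For weights not of this form one shows the bidiagonal map is injective.

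The main obstacle is the bookkeeping: one must track carefully the $x$-exponents of $V(m_j)$ relative to the shifted basis monomials $m'_i = x^{(a-1) + (c' - i)p} y^{\cdots}$ of the target, where $c'$ depends on whether $a - 1 \geq 0$ (so $c' = c - 1$) or $a = 0$ (forcing a rewrite of $a - 1 = p - 1$ mod $p$, dropping $c'$ by one more and shifting $b$), and similarly the $y$-exponents against the bound $b + jq + q - 1 < $ (next multiple of $q$). There will be a small number of boundary cases according to whether $a = 0$ and/or $b = 0$. The cleanest route may be to avoid the full matrix: write a general kernel element $g = \sum_{j=0}^{c} \lambda_j m_j$, apply $V$, and collect the coefficient of each monomial in $\mathcal{O}_{w+w_0}$; the resulting linear recursion on the $\lambda_j$ forces, step by step, that consecutive $\lambda_j$ are determined up to scale, and a nonzero solution exists iff the closing relation at the top index is automatically satisfied, which pins down $w = cqp$ and $g \in \mathbb{C}\cdot f^c$. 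An alternative, possibly slicker, argument: note that $V$ is (a multiple of) the Hamiltonian vector field of $f$ for the standard symplectic form $dx \wedge dy$ on $\mathbb{C}^2$, so $V(g) = 0$ means $dg \wedge df = 0$, i.e.\ $g$ is functionally dependent on $f$; since $f$ has an isolated critical point and $\mathbb{C}[x,y]$ is a UFD, this forces $g \in \mathbb{C}[f]$. I would present the Hamiltonian-vector-field argument as the main proof and keep the weight-grading computation as the backup/sanity check, since the former is short and conceptual while the latter is the laborious but self-contained route.
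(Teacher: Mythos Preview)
Your preferred Hamiltonian route is essentially the paper's proof, but you skip the one nontrivial step. The paper notes that $V(g)=0$ forces $\partial_x g = p x^{p-1} h$ and $\partial_y g = -q y^{q-1} h$ for a common polynomial $h$ (this is exactly your coprimality/UFD observation, since $x^{p-1}$ and $y^{q-1}$ are coprime). What you gloss over is how to pass from $dg = h\,df$ to $g \in \mathbb{C}[f]$: the paper applies the Euler identity to a homogeneous $g$ of weight $w>0$, obtaining
\[
wg = E(g) = qx\,\partial_x g + py\,\partial_y g = pq(x^p - y^q)h = pq\,f h,
\]
so $g = \tfrac{pq}{w} f h$ with $h \in \ker V$ of weight $w - pq$, and then inducts on the weight. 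Your sentence ``isolated critical point and UFD forces $g \in \mathbb{C}[f]$'' is true but is not itself an argument; the Euler trick is the clean way to close the induction in this weighted-homogeneous setting, and without it (or an appeal to connectedness of generic fibres) the Hamiltonian sketch is incomplete.

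Your backup weight-basis computation is also correct in outline and mirrors the dimension-counting the paper carries out for the \emph{cokernel} in the subsequent lemma; it would work, with the case split you anticipate, but is indeed more laborious than the three-line Euler argument.
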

\begin{proof}
A calculation shows that $V(f^{c}) = 0$. Conversely, let $g \in \ker(V)$. Because $V$ is homogeneous, it suffices to consider the case where $g$ is homogeneous of weight $w > 0$. The equation $V(g) = 0$ implies that $\partial_{x}g = px^{p-1}h$ and $\partial_{y}g = -qy^{q-1}h$, for a common polynomial $h$. Therefore, 
\[
wg = E(g) = qx \partial_{x}g + py \partial_{y} g = qp(x^p - y^q) h,
\]
so that $g = \frac{qp}{w} f h$. Hence $h$ is a function of weight $w - qp$ and it lies in the kernel of $V$. The result now follows by induction on the weight. 
\end{proof}

The Jacobian ideal of $f$ is generated by $x^{p-1}$ and $y^{q-1}$. Let $C = \mathbb{C}[x,y]/(x^{p-1}, y^{q-1})$, considered as a $\mathbb{C}$-vector space. It has a natural basis of monomials $x^{a}y^{b}$, where $0 \leq a < p-1$ and $0 \leq b < q-1$. Using this basis, $C$ is naturally graded by weight, and there is a weight preserving injective linear map $C \to \mathbb{C}[x,y]$. Consider the graded polynomial ring $\mathbb{C}[f]$, where $f$ has degree $pq$. Then $\mathbb{C}[x,y]$ is a graded $\mathbb{C}[f]$-module and there is a morphism of graded $\mathbb{C}[f]$-modules 
\[
\mathbb{C}[f] \otimes_{\mathbb{C}} C \to \mathbb{C}[x,y]. 
\]
The action of $V$ on $\mathbb{C}[x,y]$ is $\mathbb{C}[f]$-linear, so that the cokernel $\spc{coker}(V)$ is also a $\mathbb{C}[f]$-module. Post-composing with the quotient projection, we obtain the morphism 
\[
\mathbb{C}[f] \otimes_{\mathbb{C}} C \to \spc{coker}(V). 
\]
\begin{lemma} \label{cokernelofV}
The morphism $\mathbb{C}[f] \otimes_{\mathbb{C}} C \to \spc{coker}(V)$ is an isomorphism of graded $\mathbb{C}[f]$-modules. 
\end{lemma}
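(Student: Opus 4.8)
The plan is to show the map $\mathbb{C}[f] \otimes_{\mathbb{C}} C \to \operatorname{coker}(V)$ is both injective and surjective by working one weight space at a time, using the explicit basis from Lemma \ref{weightspacedecompV}. Fix a weight $w = aq + bp + cqp$ with $0 \leq a < p$, $0 \leq b < q$. The key point is to compute the action of $V$ explicitly on the monomial basis of $\mathcal{O}_{w}$ and read off the cokernel. Writing a typical basis element as $x^{a + jp} y^{b + (c-j)q}$ for $0 \leq j \leq c$ (when $c \geq 0$), one has
\[
V(x^{a+jp} y^{b+(c-j)q}) = q(a+jp) x^{a+jp-1} y^{b+(c-j)q + q - 1} + p(b+(c-j)q) x^{a+jp + p - 1} y^{b + (c-j)q - 1},
\]
which lands in $\mathcal{O}_{w + w_{0}}$, a space whose basis (again via Lemma \ref{weightspacedecompV}) has the shifted exponents. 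The matrix of $V: \mathcal{O}_w \to \mathcal{O}_{w + w_0}$ in these monomial bases is bidiagonal, so its rank is computable directly. First I would establish that this matrix has rank exactly $\max(c,0)$ — equivalently, that its kernel is one-dimensional when $c \geq 0$ and that it is injective when $c = -1$ (i.e. $w \notin$ the semigroup generated appropriately) — which recovers Lemma \ref{kernelofV} weight-by-weight and simultaneously pins down $\dim \operatorname{coker}(V)_{w+w_0}$.

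Next I would match this against the left-hand side. The graded piece $(\mathbb{C}[f] \otimes_{\mathbb{C}} C)_{w'}$ in weight $w'$ has dimension equal to the number of ways of writing $w' = a'q + b'p + c'qp$ with $0 \leq a' < p - 1$, $0 \leq b' < q - 1$, $c' \geq 0$; a short count shows this is again governed by the ``$c$-coordinate'' of $w'$ in the normal form, and I would check it agrees with the $\dim \operatorname{coker}(V)$ computed above, for every weight. Surjectivity of the map in each weight is then immediate from the fact that the images of the monomials $x^{a'} y^{b'}$ with $0 \le a' < p-1$, $0 \le b' < q-1$, together with multiplication by powers of $f$, visibly span a complement to $\operatorname{im}(V)$ — this follows from the bidiagonal structure, since $V$ raises the $x$-degree or lowers it in a controlled way and never produces a monomial whose $x$-exponent and $y$-exponent are both below the Jacobian thresholds. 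Once surjectivity holds in every weight and the dimensions match, injectivity follows, and since the map is $\mathbb{C}[f]$-linear and graded this gives the isomorphism of graded $\mathbb{C}[f]$-modules.

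I expect the main obstacle to be the bookkeeping in the bidiagonal rank computation: one must be careful about the boundary cases where a coefficient $q(a + jp)$ or $p(b + (c-j)q)$ vanishes (which happens exactly when $a = 0$ and $j = 0$, or when $b = 0$ and $j = c$, using $0 \le a < p$, $0 \le b < q$), since these zeros are precisely what make the rank drop by one and produce the class of $f^c$ in the kernel. Handling these endpoint degeneracies cleanly — perhaps by treating the cases $a = 0$, $b = 0$, $a = b = 0$ separately, or by a uniform argument tracking which monomial $x^{a'} y^{b'}$ with $a' \ge p-1$ or $b' \ge q-1$ gets ``absorbed'' — is the delicate part. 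An alternative that avoids some of this: use Lemma \ref{kernelofV} to get $\dim \ker(V)_w = \dim \mathbb{C}[f]_w$ for free, combine with $\dim \mathcal{O}_w - \dim \mathcal{O}_{w+w_0}$ via Lemma \ref{weightspacedecompV} to get $\dim \operatorname{coker}(V)$ by a rank-nullity count, and then only surjectivity of $\mathbb{C}[f] \otimes_{\mathbb{C}} C \to \operatorname{coker}(V)$ needs to be checked by hand — which is the cleaner route and the one I would actually pursue.
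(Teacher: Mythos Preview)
Your overall strategy---work weight by weight, use Lemma \ref{weightspacedecompV} and Lemma \ref{kernelofV} to match dimensions, then check the map is actually an isomorphism in each weight---is exactly the paper's strategy. The dimension count via rank--nullity (your ``alternative'') is essentially what the paper does with its case analysis on $(a,b)$.

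The gap is in the isomorphism step. Your surjectivity argument rests on the observation that $V$ applied to a monomial never produces a monomial with \emph{both} exponents below the Jacobian thresholds. That observation is correct, and it does show that the single monomial $x^{a}y^{b}$ (with $a<p-1$, $b<q-1$) is not in $\operatorname{im}(V)$. But for $c\ge 1$ the element $f^{c}x^{a}y^{b}=\sum_{k=0}^{c}(-1)^{k}\binom{c}{k}x^{a+(c-k)p}y^{b+kq}$ is a linear combination of monomials \emph{each of which} has at least one exponent above threshold, so your projection argument says nothing about it. This is precisely the non-obvious point: one must show that this particular alternating-sign vector is linearly independent from the columns of the bidiagonal matrix of $V$. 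The paper handles it by observing that the bidiagonal matrix for $V:\mathcal{O}_{w-w_{0}}\to\mathcal{O}_{w}$ has strictly positive entries in each column, while the expansion of $f^{c}x^{a}y^{b}$ has strictly alternating signs; a Laplace expansion then shows that the augmented $(c+1)\times(c+1)$ matrix has positive determinant, hence $f^{c}x^{a}y^{b}\notin\operatorname{im}(V)$. You have correctly located where the difficulty lives (the bidiagonal bookkeeping), but not the mechanism that resolves it.

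A minor point: in your first paragraph you write that the kernel of $V:\mathcal{O}_{w}\to\mathcal{O}_{w+w_{0}}$ is one-dimensional whenever $c\ge 0$. By Lemma \ref{kernelofV} it is one-dimensional only when $w$ is a multiple of $pq$, i.e.\ when $a=b=0$; otherwise $V$ is injective on $\mathcal{O}_{w}$. You correct this implicitly in your boundary-case discussion, but the earlier statement is wrong as written.
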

\begin{proof}
Since $V$ is homogeneous it suffices to consider a single weight at a time: we consider the cokernel of the map $V : \mathcal{O}_{w - w_{0}} \to \mathcal{O}_{w}$. Let $w = aq + bp + cqp$, where $0 \leq a < p$, $0 \leq b < q$ and $c \geq 0$, so that $\mathcal{O}_{w}$ has dimension $c+1$. Then $w - w_{0} = (a + 1)q + (b+1)p + (c-1) qp$. If $a < p-1$ and $b < q-1$ then $\mathcal{O}_{w-w_{0}}$ has dimension $c$. Furthermore $V$ is injective because $w - w_{0}$ is not a multiple of $qp$. Hence $\spc{coker}(V)_{w}$ is $1$-dimensional. If $a = p-1$ and $b < q-1$, then $w-w_{0} = (b+1)p + cqp$, so $\mathcal{O}_{w-w_{0}}$ has dimension $c+1$, $V$ is injective, and hence $\spc{coker}(V)_{w} = 0$. The same argument applies to the case $a < p-1$ and $b = q-1$. The only remaining case is $a = p-1$ and $b=q-1$. In this case $w-w_{0} = (c+1)qp$ and $\mathcal{O}_{w-w_{0}}$ has dimension $c+2$. But now $V$ has a $1$-dimensional kernel and so $\spc{coker}(V)_{w} = 0$. 

The upshot is that the cokernel is non-zero precisely when $a < p-1$ and $b < q-1$, in which case it is $1$-dimensional. These dimensions match with the dimensions of $\mathbb{C}[f] \otimes_{\mathbb{C}}C$. Hence it suffices for us to prove that $f^{c} x^{a} y^{b}$ is not in the image of $V$. We will do this by proving that the following map 
\[
M : \mathbb{C} \oplus \mathcal{O}_{w-w_{0}} \to \mathcal{O}_{w}, \qquad (\lambda, g) \mapsto \lambda f^{c}x^{a}y^b + V(g)
\]
is represented by a matrix with positive determinant, using the bases of Lemma \ref{weightspacedecompV}. Applying $V$ to the element $x^{a+1 + ip}y^{b+1+jq}$ yields 
\[
p(1+b+jq)x^{a + (i+1)p}y^{b+jq} + q(1+a+ip)x^{a+ip}y^{b+(j+1)q}. 
\]
The salient thing to note is that the basis elements are consecutive and the coefficients are positive. Hence $V$ is represented by a $(c+1) \times c$ matrix such that column $i$ has positive entries in rows $i$ and $i+1$ and $0$ for the remaining rows. Using the binomial theorem, $f^{c}x^a y^b  = \sum_{k=0}^{c} (-1)^{k}x^{a + (c-k)p} y^{b + kq}$. The salient point here is that the terms are non-zero with alternating signs. These give the entries of the first column of the matrix $M$. Computing the determinant of $M$ using the Laplace expansion along the first column shows that it is positive. 
\end{proof}

\subsection*{The dgla $(U_{0}, \delta_{S})$}
Now we choose a Lie algebra $\mathfrak{g}$ and a semisimple element $S \in \mathfrak{g}$. This induces an eigenspace decomposition of the Lie algebra 
\[
\mathfrak{g} = \bigoplus_{\lambda} \mathfrak{g}_{\lambda},
\]
where $\mathfrak{g}_{\lambda}$ is the eigenspace of $ad_{S}$ with eigenvalue $\lambda$. We will use the following convention: if $\lambda$ is not an eigenvalue of $ad_{S}$, then $\mathfrak{g}_{\lambda} = 0$. Note that the decomposition is preserved by the bracket: $[\mathfrak{g}_{\lambda}, \mathfrak{g}_{\mu}] \subseteq \mathfrak{g}_{\lambda + \mu}$. 

The dgla $(U_{0}, \delta_{S})$ has terms in degrees $0$ and $1$. They are given by 
\[
U^{0}_{0} = \bigoplus_{w \geq 0} \mathcal{O}_{w} \otimes \mathfrak{g}_{-w}, \qquad U^{1}_{0} = \bigoplus_{w \geq 0} \mathcal{O}_{w} \otimes \mathfrak{g}_{w_{0} - w} \beta,
\]
with $\delta_{S}$ given by applying $V$. We will sometimes drop $\beta$ from the notation. 

Applying Lemmas \ref{kernelofV} and \ref{cokernelofV} we obtain the following description of the cohomology of $(U_{0}, \delta_{S})$. 

\begin{corollary} \label{quasiisocohomology}
The cohomology of $(U_{0}, \delta_{S})$ is given as follows 
\[
H^{0}(U_{0}) = \bigoplus_{c \geq 0} f^{c} \mathfrak{g}_{-cpq}, \qquad H^{1}(U_{0}) \cong \bigoplus_{w \geq 0} (\mathbb{C}[f] \otimes C)_{w} \otimes \mathfrak{g}_{w_{0} - w}. 
\]
Furthermore, the graded Lie algebra $H^{\bullet}(U_{0})$ with zero differential naturally embeds into $(U_{0}, \delta_{S})$ as a quasi-isomorphic sub-dgla. 
\end{corollary}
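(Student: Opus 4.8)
The plan is to read the cohomology straight off Lemmas~\ref{kernelofV} and~\ref{cokernelofV}, and then to exhibit an explicit, bracket-closed set of representatives sitting inside $(U_0,\delta_S)$. By the description preceding the statement, $\delta_S$ acts as $V\otimes\mathrm{id}$ on each weight-graded summand and $U_0$ is concentrated in degrees $0$ and $1$, so
\[
H^0(U_0)=\bigoplus_{w\geq 0}\ker\!\big(V\colon\mathcal O_w\to\mathcal O_{w+w_0}\big)\otimes\mathfrak g_{-w},\qquad
H^1(U_0)=\bigoplus_{w\geq 0}\mathrm{coker}\!\big(V\colon\mathcal O_{w-w_0}\to\mathcal O_w\big)\otimes\mathfrak g_{w_0-w}.
\]
Lemma~\ref{kernelofV} identifies $\ker(V)$ with $\mathbb C[f]$, whose weight-$w$ part is $\mathbb C f^c$ precisely when $w=cpq$ with $c\geq 0$ and is $0$ otherwise; this yields $H^0(U_0)=\bigoplus_{c\geq 0}f^c\mathfrak g_{-cpq}$. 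Lemma~\ref{cokernelofV} identifies $\mathrm{coker}(V)$ with $\mathbb C[f]\otimes_{\mathbb C}C$ as graded $\mathbb C[f]$-modules, and tensoring weightwise with the $\mathfrak g$-eigenspaces gives the stated formula for $H^1(U_0)$.

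For the second assertion I would take the graded subspace $\mathcal H=\mathcal H^0\oplus\mathcal H^1\subseteq U_0$ with $\mathcal H^0:=\ker(\delta_S)=\bigoplus_{c\geq 0}\mathbb C f^c\otimes\mathfrak g_{-cpq}$ and with $\mathcal H^1$ the span of the elements $f^cx^ay^b\,\beta\otimes X$ for $c\geq 0$, $0\leq a<p-1$, $0\leq b<q-1$ and $X\in\mathfrak g_{w_0-(cpq+aq+bp)}$ --- that is, the image in $U_0^1$ of the tautological lift $\mathbb C[f]\otimes_{\mathbb C}C\hookrightarrow\mathbb C[x,y]$ from Lemma~\ref{cokernelofV}, twisted by the appropriate $\mathfrak g$-eigenspaces. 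Since $\ker(\delta_S)=\mathcal H^0$ by definition and $\delta_S$ vanishes on $U_0^1\supseteq\mathcal H^1$ (because $U_0^2=0$), the differential is identically zero on $\mathcal H$, so $(\mathcal H,0)$ is a subcomplex; the inclusion $\mathcal H\hookrightarrow U_0$ is a quasi-isomorphism, being the identification $\ker\delta_S=H^0(U_0)$ in degree $0$ and, in degree $1$, carrying $\mathcal H^1$ isomorphically onto $H^1(U_0)=U_0^1/\im(\delta_S)$ by Lemma~\ref{cokernelofV}. It then remains to check that $\mathcal H$ is closed under the graded Lie bracket of $U_0$, after which $(\mathcal H,0)$ is a sub-dgla and transporting the induced bracket of $H^\bullet(U_0)$ along the quasi-isomorphism identifies it with $(\mathcal H,0)$. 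Brackets landing in degree $2$ vanish since $U_0^2=0$, and on basis vectors $[f^c\otimes X,\,f^{c'}\otimes X']=f^{c+c'}\otimes[X,X']$ and $[f^c\otimes X,\,f^{c'}x^ay^b\,\beta\otimes X']=f^{c+c'}x^ay^b\,\beta\otimes[X,X']$; these again lie in $\mathcal H$ because multiplication by $f^c$ leaves the exponents $a<p-1$, $b<q-1$ unchanged, so the monomial stays in the span defining $\mathcal H^1$, and because $[\mathfrak g_\lambda,\mathfrak g_\mu]\subseteq\mathfrak g_{\lambda+\mu}$ keeps the $\mathfrak g$-eigenvalue correctly matched to the shifted weight.

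The corollary is nearly formal once the two preceding lemmas are in hand. The one substantive input beyond routine bookkeeping of the two compatible gradings --- the $E$-weight on functions and the $\mathrm{ad}_S$-eigenvalue on $\mathfrak g$ --- is the observation that the monomials $f^cx^ay^b$ with $a<p-1$, $b<q-1$ span a $\mathbb C[f]$-submodule of $\mathbb C[x,y]$; this is exactly what makes the chosen complement $\mathcal H^1$ stable under the action of $\mathcal H^0=H^0(U_0)$. I expect that point --- together with keeping track of the Koszul signs in the graded bracket --- to be the only mildly delicate step.
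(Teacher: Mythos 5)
Your argument is correct and is essentially the proof the paper leaves implicit: the cohomology formulas follow by reading $\ker(V)$ and $\operatorname{coker}(V)$ weight-by-weight from Lemmas~\ref{kernelofV} and~\ref{cokernelofV}, and the sub-dgla of representatives is exactly the image of $\mathbb{C}[f]\otimes_{\mathbb{C}}C$ (twisted by the $\mathrm{ad}_S$-eigenspaces) together with $\ker(\delta_S)$, which is bracket-closed because $U_0^2=0$ and multiplication by $f^c$ preserves the monomial basis of $C$. No gaps.
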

Let $a \in \mathfrak{g}$ be a real semisimple element. Recall from \cite{boalch2011riemann} that this determines a parabolic subgroup of $G$ 
\[
P(a) = \{ g \in G \ | \ \lim_{z \to 0} z^{a} g z^{-a} \text{ exists in $G$ along any ray} \},
\]
where $z \in \mathbb{C}$ and $z^{a} = \exp(\log(z) a)$. Decomposing $S$ into real and imaginary parts, $S = a + i b$, we can define the following subgroup of $G$
\[
P_{S} := C_{G}(e^{\frac{-2\pi i}{qp}S}) \cap P(\frac{-a}{qp}).
\]
In this definition $C_{G}(e^{\frac{-2\pi i}{qp}S})$ is the centralizer of $e^{\frac{-2\pi i}{qp}S}$ in $G$. It is reductive but possibly disconnected. Let $C_{S}$ denote the connected component of the identity. The group $P_{S}$ is the parabolic subgroup of $C_{G}(e^{\frac{-2\pi i}{qp}S})$ (or $C_{S}$) determined by the element $\frac{-a}{qp}$ and it is connected. The reductive quotient of $P_{S}$ is $G_{S}$, the centralizer of $S$ in $G$. Denote the quotient map $\chi: P_{S} \to G_{S}$. 

\begin{lemma} \label{describingparabolic}
The group $P_{S}$ embeds into $Aut(S)$ as the subgroup integrating $H^{0}(U_{0})$. The gauge action of $P_{S}$ preserves $H^{1}(U_{0}) \subset U^{1}_{0}$ and is linear. Hence, we have a Lie subgroupoid 
\[
P_{S} \ltimes H^{1}(U_{0}) \subseteq Aut(S) \ltimes U^{1}_{0}. 
\]
\end{lemma}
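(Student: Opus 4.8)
The plan is to recall from \cite{bischoff2022normal} the explicit description of $Aut(S)$ and its Lie algebra $L^0_{D,\mathfrak{g},0} = U^0_0 = \bigoplus_{w\geq 0}\mathcal{O}_w\otimes\mathfrak{g}_{-w}$, and to identify $H^0(U_0) = \bigoplus_{c\geq 0} f^c\mathfrak{g}_{-cpq}$ (Corollary \ref{quasiisocohomology}) as a Lie subalgebra of $U^0_0$, using Lemma \ref{kernelofV}. The first step is to show that this subalgebra is algebraic, i.e.\ integrates to a closed connected algebraic subgroup of $Aut(S)$; then I must identify that subgroup with $P_S$. Write $S = a + ib$ with $a,b$ the real and imaginary parts. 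The key is that the weight grading by $E$ and the $ad_S$-grading combine: an element of $\mathcal{O}_w\otimes\mathfrak{g}_{-w}$ is simultaneously in the $w$-weight space for $L_E$ and the $(-w)$-eigenspace of $ad_S$, so it sits in the $0$-eigenspace of $L_E + ad_a$ and an imaginary eigenspace of $ad_b$. I would argue that $j^*$ sends $H^0(U_0)$ onto the Lie algebra $\mathfrak{g}_S$ of $G_S$ (the $c=0$ summand $\mathfrak{g}_0 = \mathfrak{g}_S$ maps isomorphically, the $c\geq 1$ summands map to $0$ since $f(0)=0$), and that the kernel, $\bigoplus_{c\geq 1} f^c\mathfrak{g}_{-cpq}$, is a nilpotent subalgebra lying in $\operatorname{Lie}(Aut_0(S))$. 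Comparing with Boalch's parabolic $P(-a/qp) = \{g : \lim_{z\to 0} z^{-a/qp} g z^{a/qp}\text{ exists}\}$ inside $C_G(e^{-2\pi i S/qp})$: the condition that a one-parameter conjugation limit exists is exactly the condition that only non-negative powers of $z$ appear, which under the substitution $z = f$ (weight $qp$) corresponds precisely to the non-negativity of the weight $w\geq 0$ in the summand $\mathcal{O}_w\otimes\mathfrak{g}_{-w}$. I would make this precise by checking the matching on Lie algebras: $\operatorname{Lie}(P_S)$ decomposes under $ad_{-a/qp}$ into non-negative-integer eigenspaces, and these correspond term-by-term to the summands indexed by $c\geq 0$ after accounting for the shift by $\mathfrak{g}_S$.

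Next I would establish that the gauge action of $P_S\subseteq Aut(S)$ on $U^1_0 = \bigoplus_{w\geq 0}\mathcal{O}_w\otimes\mathfrak{g}_{w_0-w}\beta$ preserves the subspace $H^1(U_0)$ and is linear there. For linearity, observe from Lemma \ref{gaugeaction} that on the $I_0$-component the action is plain conjugation, and on the $U^1_0$-component it is $g\ast x = gxg^{-1} - \delta_S(g)g^{-1}$; the inhomogeneous term $\delta_S(g)g^{-1}$ vanishes (or acts trivially on cohomology) precisely when $g$ integrates a $\delta_S$-closed element, i.e.\ when $g\in P_S$ — indeed for $g = \exp(\xi)$ with $\delta_S\xi = 0$ one computes $\delta_S(g)g^{-1} = \delta_S(\exp(\xi))\exp(-\xi)$, which lies in the image of $\delta_S$, hence is exact and drops out when we pass to the quotient/representative $H^1(U_0)$. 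For the claim that $H^1(U_0)$ is preserved: by Corollary \ref{quasiisocohomology} the inclusion $H^\bullet(U_0)\hookrightarrow U_0$ is a sub-dgla with zero differential, so it suffices to show the $P_S$-action (which is a dgla automorphism action by Lemma \ref{gaugeaction}) preserves this particular splitting; this follows because the splitting is constructed via the bases of Lemma \ref{weightspacedecompV} in a way compatible with the $ad_S$- and $L_E$-gradings, both of which $P_S$ respects. The linearity of the resulting action on $H^1(U_0)$ is then immediate since it is conjugation composed with a linear projection.

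Finally, assembling these, the inclusions $P_S\hookrightarrow Aut(S)$ and $H^1(U_0)\hookrightarrow U^1_0$ are compatible with the groupoid structure maps (source, target, composition all being the evident ones for action groupoids), so $P_S\ltimes H^1(U_0)$ is a Lie subgroupoid of $Aut(S)\ltimes U^1_0$, as claimed. I expect the main obstacle to be the precise identification of $P_S$ with the integrating subgroup of $H^0(U_0)$: one must correctly match Boalch's limit condition (phrased in terms of a genuine $\mathbb{C}^*$-conjugation on $G$ by $z^{-a/qp}$) with the algebraic condition of non-negative $E$-weight, keeping careful track of the factor $-1/qp$, the role of the disconnectedness of $C_G(e^{-2\pi i S/qp})$, and the semidirect-product decomposition $Aut(S)\cong Aut_0(S)\rtimes G_S$ versus $P_S$'s own Levi decomposition with reductive quotient $G_S$. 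The subtlety is that $H^0(U_0)$ is a subalgebra of the full $U^0_0$ and one needs to see it is exactly the parabolic and not something larger or smaller; this should come down to the statement that $f^c\mathfrak{g}_{-cpq}$ for $c\geq 1$ exhausts the "unipotent part visible to $E$" while $\mathfrak{g}_S$ is the reductive part, matching the parabolic exactly.
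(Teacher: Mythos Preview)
Your overall strategy is workable but differs from the paper's, and one step has a real gap.

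The paper does not match Lie algebras directly inside $Aut(S)$. Instead it factors through the one-dimensional situation: the map $f:\mathbb{C}^2\to\mathbb{C}$ induces a Lie algebroid morphism $\mathbb{C}\ltimes\mathbb{C}^2\to\mathbb{C}\ltimes\mathbb{C}$, $(\lambda,x,y)\mapsto(pq\lambda,f(x,y))$, under which the connection $d+\tfrac{1}{qp}S\,d\log z$ on $\mathbb{C}$ pulls back to $p^*S$. Pullback therefore embeds the automorphism group of this one-variable connection into $Aut(S)$, and the paper simply cites \cite[Proposition~3.4]{bischoff2020lie} to identify that automorphism group with $P_S$ (via evaluation at $z=1$) and its Lie algebra with $\bigoplus_{c\geq 0}z^c\mathfrak{g}_{-cpq}$, which under pullback becomes $H^0(U_0)$. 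This bypasses the weight-matching you attempt and, crucially, makes transparent that elements of $P_S\subset Aut(S)$ are functions of $f$ alone. Your direct route can be made to work, but it reproves a special case of a result already available.

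The gap is in your linearity argument. You claim that for $g=\exp(\xi)$ with $\delta_S\xi=0$ the inhomogeneous term $\delta_S(g)g^{-1}$ ``lies in the image of $\delta_S$, hence is exact and drops out when we pass to the quotient/representative $H^1(U_0)$.'' But the lemma asserts that the gauge action preserves the \emph{subspace} $H^1(U_0)\subset U^1_0$ and is linear there, not merely that it descends to cohomology. Setting $x=0$ in $g\ast x=gxg^{-1}-\delta_S(g)g^{-1}$ shows linearity forces $\delta_S(g)g^{-1}=0$ on the nose, not just exact. This is in fact what happens: since $\delta_S$ acts on $U^0_0$ as the derivation $V$, and $V(\xi)=0$, one gets $V(\exp\xi)=0$ term by term, so $\delta_S(g)g^{-1}=0$; in the paper's picture this is immediate because $g$ depends only on $f$ and $V(f)=0$. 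Your reasoning that $\delta_S(\exp\xi)\exp(-\xi)$ is automatically ``in the image of $\delta_S$'' is not a valid argument and is in any case too weak. Once $\delta_S(g)g^{-1}=0$ is established, the action is pure conjugation, and preservation of $H^1(U_0)$ follows from $H^\bullet(U_0)\subset U_0$ being a sub-dgla (Corollary~\ref{quasiisocohomology}) plus connectedness of $P_S$; your appeal to ``compatibility with the $L_E$-grading'' is not quite right, since $Aut(S)$ in general only respects $L_S=L_E+ad_S$.
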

\begin{proof}
Let $\mathbb{C} \ltimes \mathbb{C}^2$ be the Lie algebroid generated by the action of $E$ and let $\mathbb{C} \ltimes \mathbb{C}$ be the Lie algebroid generated by the action of $z \partial_{z}$. The following defines a Lie algebroid morphism 
\[
f: \mathbb{C} \ltimes \mathbb{C}^2 \to \mathbb{C} \ltimes \mathbb{C}, \qquad (\lambda, x, y) \mapsto (pq \lambda, f(x,y)),
\]
and under this map, the logarithmic connection $d + \frac{1}{qp}S d \log z$ pulls back to $p^* S$. As a result, the pullback defines an embedding of automorphism groups from $Aut(d + \frac{1}{qp}S d \log z) \to Aut(S)$. In \cite[Proposition 3.4]{bischoff2020lie} it is shown that restricting an automorphism to $1 \in \mathbb{C}$ defines an embedding of $Aut(d + \frac{1}{qp}S d \log z)$ into $G$ which identifies it with $P_{S}$. Furthermore, the Lie algebra of $P_{S}$ is identified with $\bigoplus_{c \geq 0} z^{c} \mathfrak{g}_{-cpq}$, and under the pullback, this is sent isomorphically to $H^{0}(U_{0})$. Finally, since the action of $H^{0}(U_{0})$ preserves $H^{1}(U_{0})$ and is linear, the same is true of $P_{S}$. 
\end{proof}

Given the semisimple element $S \in \mathfrak{g}$, we say that it is \emph{large enough} if all the positive integer eigenvalues of $ad_{S}$ are strictly greater than $w_{0}$. 

\begin{proposition} \label{planecurveME1}
The inclusion $P_{S} \ltimes H^{1}(U_{0}) \subseteq Aut(S) \ltimes U^{1}_{0}$ is a Morita equivalence if $S$ is large enough. 
\end{proposition}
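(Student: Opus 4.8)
The plan is to verify directly the two conditions for a weak (Morita) equivalence of Lie groupoids for the inclusion $P_S\ltimes H^1(U_0)\hookrightarrow Aut(S)\ltimes U^1_0$: essential surjectivity, namely that the map
\[
\Phi\colon Aut(S)\times H^1(U_0)\longrightarrow U^1_0,\qquad (g,x)\longmapsto g\ast x,
\]
is a surjective submersion; and full faithfulness, namely that $x\in H^1(U_0)$, $g\in Aut(S)$ and $g\ast x\in H^1(U_0)$ force $g\in P_S$. Note first that, because $k=2$ forces $U^2_0=0$, the curvature of $(U_0,\delta_S)$ vanishes identically, so the classical truncation of $[\mathcal U_S/Aut(S)]$ is literally the action groupoid $Aut(S)\ltimes U^1_0$, and $H^1(U_0)\subseteq U^1_0$ is a weight‑graded complement to $\im(\delta_S)$; the gauge action of $G_S\subseteq Aut(S)$ is by conjugation (since $\delta_S(g_0)g_0^{-1}=0$ for $g_0\in G_S$) and preserves $H^1(U_0)$ by Lemma \ref{describingparabolic}.

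The single place the hypothesis is used is the following. The operator $L_E$ commutes with $\delta_S$ and is a derivation of $[-,-]$, so it induces a weight grading on $U_0$ that is preserved by $\delta_S$ and additive under the bracket; the slot $\mathcal{O}_m\otimes\mathfrak{g}_{w_0-m}\beta$ of $U^1_0$ has $L_E$‑weight $m-w_0$, and the slot $\mathcal{O}_m\otimes\mathfrak{g}_{-m}$ of $U^0_0$ has $L_E$‑weight $m$. Now $w_0=pq-p-q$ has normal form $(p-1)q+(q-1)p+(-1)pq$, so $\mathcal{O}_{w_0}=0$ by Lemma \ref{weightspacedecompV}; hence the $L_E$‑weight‑$0$ part of $U^1_0$ vanishes, and for $0\le m<w_0$ the ``large enough'' hypothesis forces $\mathfrak{g}_{w_0-m}=0$, since $w_0-m$ would be a positive integer eigenvalue of $ad_S$ with $w_0-m\le w_0$. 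Therefore $U^1_0$, and a fortiori $H^1(U_0)$, is concentrated in strictly positive $L_E$‑weights, whereas $U^0_0$ lives in weights $\ge0$ with weight‑$0$ part $\mathfrak{g}_S=Lie(G_S)$, and $H^0(U_0)=\bigoplus_{c\ge0}f^c\mathfrak{g}_{-cpq}$ has weight‑positive part equal to $Lie$ of the unipotent radical $U(P_S)$ of $P_S$. It is this positivity that forces all the gauge/Baker--Campbell--Hausdorff corrections to strictly raise weight.

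For essential surjectivity: given $w\in U^1_0$, set $w^{(0)}=w$ and at each stage choose $\xi_n\in U^0_0$ with $\delta_S\xi_n$ equal to the $\im(\delta_S)$‑component of $w^{(n)}$ and with $L_E$‑weight support inside that of $w^{(n)}$ (possible since $\delta_S$ is weight‑homogeneous), and put $w^{(n+1)}=e^{\xi_n}\ast w^{(n)}$. Expanding $e^{\xi_n}\ast w^{(n)}=e^{ad_{\xi_n}}w^{(n)}-\frac{e^{ad_{\xi_n}}-1}{ad_{\xi_n}}\delta_S\xi_n$ and using that all weights occurring are $\ge1$, an induction shows that the $\im(\delta_S)$‑component of $w^{(n)}$ has $L_E$‑weight $\ge n+1$; since $U^1_0$ is finite dimensional with bounded weights this component vanishes for $n$ large, so $w^{(n)}=g\ast w\in H^1(U_0)$ for $g=e^{\xi_{n-1}}\cdots e^{\xi_0}\in Aut(S)$, i.e.\ $w=\Phi(g^{-1},w^{(n)})$. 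For the submersion property, the differential of $\Phi$ at $(e,x_0)$ with $x_0\in H^1(U_0)$ is $(\xi,v)\mapsto v+[\xi,x_0]-\delta_S\xi$, with image $H^1(U_0)+\im(\delta_{S,x_0})$ where $\delta_{S,x_0}=\delta_S+[x_0,-]$; since $x_0$ has $L_E$‑weight $\ge1$ the error term $[x_0,\xi]$ strictly raises weight, and a weight induction identical in spirit to the one above gives $H^1(U_0)+\im(\delta_{S,x_0})=H^1(U_0)+\im(\delta_S)=U^1_0$, so $d\Phi_{(e,x_0)}$ is onto; $Aut(S)$‑equivariance of $\Phi$ (the gauge action on $U^1_0$ being by affine isomorphisms) then propagates this to all of $Aut(S)\times H^1(U_0)$.

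For full faithfulness, decompose $g=ug_0$ along $Aut(S)=Aut_0(S)\rtimes G_S$; since $g_0\in P_S$ preserves $H^1(U_0)$, it suffices to show that $u\in Aut_0(S)$ with $y\in H^1(U_0)$ and $u\ast y\in H^1(U_0)$ implies $u\in U(P_S)$. If not, let $k$ be least with $u\in U(P_S)\,Aut_0(S)^{\ge k}$ but $u\notin U(P_S)\,Aut_0(S)^{\ge k+1}$, where $Aut_0(S)^{\ge k}$ is the weight‑$\ge k$ step of the exponential filtration; write $u=v_0e^{\eta}$ with $v_0\in U(P_S)$, $\eta$ of weight $\ge k$, and weight‑$k$ part $\eta_k\notin Lie(U(P_S))_k$. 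As $v_0^{-1}\in P_S$ preserves $H^1(U_0)$ we get $e^{\eta}\ast y=v_0^{-1}\ast(u\ast y)\in H^1(U_0)$; because $y$ has $L_E$‑weight $\ge1$, the only weight‑$k$ contribution to $e^{\eta}\ast y-y$ is $-\delta_S\eta_k$ (every $ad_\eta$‑term raises weight past $k$), so $e^{\eta}\ast y\in H^1(U_0)$ forces $\delta_S\eta_k=0$, i.e.\ $\eta_k\in H^0(U_0)$; being of positive weight, $\eta_k\in Lie(U(P_S))_k$ — a contradiction. Hence $g\in P_S$, and one finally checks, via the tangent computation above, that the induced bijection $P_S\times H^1(U_0)\to\{(g,x):x\in H^1(U_0),\ g\ast x\in H^1(U_0)\}$ is an isomorphism of varieties, so that both weak‑equivalence conditions hold. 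The main obstacle is the bookkeeping in the two weight inductions of the third paragraph: one must check that each term produced by the gauge expansions genuinely lands in strictly higher $L_E$‑weight, which is exactly what $\mathcal{O}_{w_0}=0$ together with the eigenvalue hypothesis guarantees; the full‑faithfulness argument is comparatively routine once this positivity is in hand.
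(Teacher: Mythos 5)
Your proof is correct and follows essentially the same route as the paper: the same iterative, weight-raising gauge normalization (using $\mathcal{O}_{w_{0}}=0$ and the ``large enough'' hypothesis to place $U^{1}_{0}$ in strictly positive $L_{E}$-weights) for essential surjectivity, and the same lowest-weight analysis of $\delta_{S}(e^{u})e^{-u}$ for full faithfulness. The only cosmetic difference is that you establish the infinitesimal/submersion statement by a direct weight induction, whereas the paper packages it as an application of the homological perturbation lemma to the inclusion $(H^{\bullet}(U_{0}), ad_{\gamma}) \to (U_{0}^{\bullet}, \delta_{S} + ad_{\gamma})$.
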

\begin{proof}
First, because of the assumption on $S$ and the fact that $\mathcal{O}_{w_{0}} = 0$ (see Lemma \ref{weightspacedecompV}), the vector space $U^{1}_{0}$ has the following form 
\[
U^{1}_{0} = \bigoplus_{w > w_{0}} \mathcal{O}_{w} \otimes \mathfrak{g}_{w_{0} - w} \beta.
\]
We now proceed in several steps. 
\begin{enumerate}
\item Claim: The subspace $H^{1}(U_{0})$ intersects every orbit of $Aut(S)$. Given $\gamma \in U^{1}_{0}$, we need to find an element of $Aut(S)$ which sends $\gamma$ into $H^{1}(U_{0})$. We do this iteratively following the usual proof of the normal form for ODEs with Fuchsian singularities. First, we expand $\gamma = \sum_{w > w_{0}} \gamma_{w}$, where $\gamma_{w} \in \mathcal{O}_{w} \otimes \mathfrak{g}_{w_{0} - w} \beta$. Given a weight $w' > w_{0}$, let $u \in \mathcal{O}_{w' - w_{0}} \otimes \mathfrak{g}_{w_{0} - w'}$, and consider the action of $e^{u} \in Aut(S)$ on $\gamma$: 
\[
e^{u} \ast \gamma = e^{u} \gamma e^{-u} - V(e^{u})e^{-u} \beta. 
\]
We claim that $\gamma$ is modified in weights $w'$ and higher. Indeed, expanding we get
\[
V(e^{u})e^{-u} = V(u) - V(u)u + \frac{1}{2}V(u^2) + ...
\]
The first term has weight $w'$. All other terms have higher weights since $w' - w_{0} > 0$. Expanding the term $e^{u} \gamma_{w} e^{-u}$ gives 
\[
exp(ad_{u})\gamma_{w} = \gamma_{w} + [u, \gamma_{w}] + \frac{1}{2}[u,[u,\gamma_{w}]] + ...
\]
The second term has weight $w' - w_{0} + w > w'$, since $w > w_{0}$. Note that the action on weight $w'$ is given by $\gamma_{w'} \mapsto \gamma_{w'} - V(u) \beta$. 

By Lemma \ref{cokernelofV}, the element $\gamma_{w'}$ can be decomposed as 
\[
\gamma_{w'} = f^{c}x^{a}y^b \otimes X \beta + V(u) \beta,
\]
where $f^{c}x^{a}y^b \otimes X  \in (\mathbb{C}[f] \otimes C)_{w'} \otimes \mathfrak{g}_{w_{0} - w'}$ and $u \in \mathcal{O}_{w' - w_{0}} \otimes \mathfrak{g}_{w_{0} - w'}$. Then $(e^{u} \ast \gamma)_{w'} = \gamma_{w'} - V(u) \beta \in H^{1}(U_{0})$. 

Now starting with the lowest weight $w' > w_{0}$, we iteratively act on $\gamma$ by elements $e^{u} \in Aut(S)$ so that the terms up to level $w'$ lie in $H^{1}(U_{0})$. This will terminate after finitely many steps since $U^{1}_{0}$ is finite-dimensional. The result is an element of $H^{1}(U_{0})$. 
\item Claim: The inclusion functor $P_{S} \ltimes H^{1}(U_{0}) \to Aut(S) \ltimes U^{1}_{0}$ is fully-faithful. We need to show that given $g \in Aut(S)$ and $\gamma \in H^{1}(U_{0})$, if $g \ast \gamma \in H^{1}(U_{0})$, then $g \in P_{S}$. Recall the Levi decomposition $Aut(S) \cong Aut_{0}(S) \rtimes G_{S}$, and note that $G_{S} \subseteq P_{S}$. It therefore suffices to work under the assumption that $g \in Aut_{0}(S)$. Since such a $g$ is unipotent, it has the form $g = e^{u}$, for $u \in \bigoplus_{w > 0} \mathcal{O}_{w} \otimes \mathfrak{g}_{-w}$. Expanding in weights, $u = \sum_{w \geq w_{1}} u_{w}$, where $w_{1} > 0$ is the lowest weight. From the above expressions for $e^{u} \ast \gamma$, we see that the lowest weight for which $\gamma$ is modified is $w_{0} + w_{1}$. The corresponding term is given by 
\[
(e^u \ast \gamma)_{w_{0} + w_{1}} = \gamma_{w_{0} + w_{1}} + V(u_{w_{1}}). 
\]
Since $e^{u} \ast \gamma \in H^{1}(U_{0})$, we must have $V(u_{w_{1}}) = 0$, implying that $u_{w_{1}} \in H^{0}(U_{0})$ and $e^{u_{w_{1}}} \in P_{S}$. Let $\tilde{\gamma} = e^{u_{w_{1}}} \ast \gamma \in H^{1}(U_{0})$, so that $g \ast \gamma = (e^{u} e^{-u_{w_{1}}}) \ast \tilde{\gamma}$. Using the Baker-Campbell-Hausdorff formula and the fact that $w_{1} > 0$, we see that $e^{u} e^{-u_{w_{1}}} = e^{v}$, where the lowest weight of $v$ is strictly greater than $w_{1}$. Hence the result follows by induction on $w_{1}$. 

\item Let $\gamma \in H^{1}(U_{0})$. Claim: The inclusion $(H^{\bullet}(U_{0}), ad_{\gamma}) \to (U_{0}^{\bullet}, \delta_{S} + ad_{\gamma})$ is a quasi-isomorphism. This follows by the homological perturbation lemma \cite{crainic2004perturbation}. Let $a: H^{\bullet}(U_{0}) \to U^{\bullet}_{0}$ be the inclusion. By Corollary \ref{quasiisocohomology}, this is a quasi-isomorphism with respect to the $0$ differential on the domain and $\delta_{S}$ on the codomain. We have the decomposition $U^{1}_{0} = H^{1}(U_{0}) \oplus \spc{Im}(\delta_{S})$. Let $C$ be a complement to $H^{0}(U_{0})$, so that $U^{0}_{0} = H^{0}(U_{0}) \oplus C$. It is possible to choose this complement compatible with the weight decomposition. Using the decomposition we define the projection $b : U^{\bullet}_{0} \to H^{\bullet}(U_{0})$. The restriction $\delta_{S}|_{C} : C \to \spc{Im}(\delta_{S})$ is an isomorphism, and the inverse defines a map $h: U^{1}_{0} \to U^{0}_{0}$ which has weight $-w_{0}$. These maps satisfy $b \circ a = id$, $id - a \circ b = [ \delta_{S}, h]$, as well as the side conditions $h \circ a = 0$, $b \circ h = 0$ and $h \circ h = 0$. 

Now consider the map $ad_{\gamma} : U^{0}_{0} \to U^{1}_{0}$ which will serve as a perturbation. Note that it restricts to a map $H^{0}(U_{0}) \to H^{1}(U_{0})$. Expanding in the weights, $\gamma = \sum_{w > w_{0}} \gamma_{w}$. Since the weight of $h$ is $-w_{0}$, it follows that $ad_{\gamma} \circ h$ is an endomorphism of $U^{1}_{0}$ which raises the weight of an element by at least $1$. It follows that $ad_{\gamma} \circ h$ is nilpotent, allowing us to apply the perturbation lemma. Using the formulas appearing above Lemma \ref{perthmtpy}, we see that $a$ remains unperturbed, $\delta_{S}$ is deformed to $\delta_{S} + ad_{\gamma}$ and the zero differential is deformed to $ad_{\gamma}$. 

\end{enumerate}
\end{proof}

\subsection*{The moduli stack $[\mathcal{W}(A)/Aut(S)]$}
Now we choose an element $A = S + N_{0} \in \mathfrak{g}$, which we write using the Jordan decomposition. This determines the derived stack $[\mathcal{W}(A)/Aut(S)]$. The base of the derived manifold is 
\[
W(A) = \{ C \beta + N \alpha_{0} \in U^{1}_{0} \oplus U^{0}_{0} \alpha_{0} \ | \ N(0) \in G_{S} \ast N_{0} \}. 
\]
The bundle of curved dgla's is the trivial bundle $W(A) \times U^{1}_{0} \alpha_{0}$ with trivial dgla structure. The curvature section is given by $F(C \beta + N \alpha_{0}) = (V(N) + [C, N] ) \beta \wedge \alpha_{0}$. Applying Lemma \ref{describingparabolic} and Proposition \ref{planecurveME1} we can construct a smaller model for this derived stack.

By Lemma \ref{describingparabolic}, the vector space $H^{1}(U_{0})$ is a linear representation of $P_{S}$. The Lie algebra $Lie(P_{S})$ is likewise a representation, and the subspace $d\chi^{-1}(G_{S} \ast N_{0})$ is preserved by this action (recall that $\chi: P_{S} \to G_{S}$ is the projection to the reductive quotient). Let \[ Q(A) = H^{1}(U_{0}) \times d\chi^{-1}(G_{S} \ast N_{0}),\] 
equipped with the action of $P_{S}$. The infinitesimal action of $P_{S}$ on $H^{1}(U_{0})$ defines a $P_{S}$-equivariant map 
\[
F_{S} : Q(A) \to H^{1}(U_{0}), \qquad (C, N) \mapsto [C,N]. 
\]
Viewing this as a section of the bundle $Q(A) \times H^{1}(U_{0})$ we get a derived manifold $\mathcal{Q}(A)$, which represents the derived vanishing locus of $F_{S}$. This defines the derived stack 
\[
[\mathcal{Q}(A)/P_{S}]. 
\]
By Corollary \ref{quasiisocohomology} and Lemma \ref{describingparabolic} this maps into $[\mathcal{W}(A)/Aut(S)]$. 

\begin{theorem} \label{parabolicderivedstack}
There is a map of derived stacks
\[
i: [\mathcal{Q}(A)/P_{S}] \to [\mathcal{W}(A)/Aut(S)].
\]
For points $(0, N) \in Q(A)$ the derivative $di_{(0,N)}$ is a quasi-isomorphism of tangent complexes. Furthermore, if $S$ is large enough, then $i$ is an equivalence. 
\end{theorem}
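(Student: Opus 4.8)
The plan is to assemble the map $i$ and establish its properties by combining three ingredients already in place: the sub-dgla inclusion $(H^\bullet(U_0), \delta=0) \hookrightarrow (U_0, \delta_S)$ from Corollary \ref{quasiisocohomology}, the Morita equivalence $P_S \ltimes H^1(U_0) \subseteq Aut(S) \ltimes U^1_0$ from Proposition \ref{planecurveME1}, and the structural identifications of $W(A)$, $Q(A)$ and their curvature sections given just before the statement. First I would define $i$ explicitly: on base manifolds send $(C,N) \in Q(A)$ to $C\beta + N\alpha_0 \in W(A)$, using Lemma \ref{describingparabolic} to see that $d\chi^{-1}(G_S \ast N_0)$ maps to $\{N : N(0) \in G_S \ast N_0\}$ (the point being that $\chi = j^*$ up to the identification of $Lie(P_S)$ with $H^0(U_0)$, so $d\chi(N) = N(0)$); on the bundles of curved dgla's send $H^1(U_0)\alpha_0 \hookrightarrow U^1_0\alpha_0$ via Corollary \ref{quasiisocohomology}; and note $P_S \hookrightarrow Aut(S)$ is equivariant by Lemma \ref{describingparabolic}. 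One checks the curvature sections match: $F_S(C,N) = [C,N]$ maps under the inclusion to $([C,N])\beta\wedge\alpha_0$, which agrees with $F(C\beta + N\alpha_0) = (V(N) + [C,N])\beta\wedge\alpha_0$ precisely on the locus where $V(N)=0$ — but this is not automatic, so the correct statement is that $i$ is a map of derived manifolds because $H^0(U_0) = \ker V \otimes(\text{stuff})$, i.e. $N \in H^0(U_0)$ forces $V(N) = 0$. This is where I would be careful: $Q(A)$ has base $H^1(U_0) \times d\chi^{-1}(G_S\ast N_0)$ with $d\chi^{-1}(G_S \ast N_0) \subseteq Lie(P_S) = H^0(U_0)$, so indeed $V(N) = 0$ identically and the curvatures agree on the nose.

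Next, for the tangent complex statement at points $(0,N)$: the tangent complex of $[\mathcal{Q}(A)/P_S]$ at $(0,N)$ is the totalization of $Lie(P_S) \to T_{(0,N)}Q(A) \to H^1(U_0)$ with differentials built from the infinitesimal $P_S$-action and $d(F_S)_{(0,N)} = [-, N] + [0,-] = ad_N$ on the $H^1$-factor (and similarly the $H^0$ part), while the tangent complex of $[\mathcal{W}(A)/Aut(S)]$ at $C\beta + N\alpha_0 = N\alpha_0$ is the totalization of $U^0_0 \to T_{N\alpha_0}W(A) \to U^1_0$ with differential $\delta_{S,N\alpha_0} = \delta_S + ad_{N\alpha_0} = V + ad_N$ (using $[\alpha_0, -]=0$ type relations and that $\delta_S$ is $V$ on $U_0$). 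So both tangent complexes are the totalizations of two-term complexes with differential of the form ``$V$ plus $ad_N$'', and $di_{(0,N)}$ is induced by the inclusion $H^\bullet(U_0) \hookrightarrow U_0$. To show it is a quasi-isomorphism I would run the homological perturbation lemma exactly as in step (3) of the proof of Proposition \ref{planecurveME1}, but with $ad_\gamma$ replaced by the nilpotent perturbation $ad_N$ (nilpotent by Lemma \ref{AutomaticNilpotency}, since $N(0) \in G_S \ast N_0$ is nilpotent and the argument of that lemma applies): the homotopy $h$ of weight $-w_0$ from that proof satisfies $ad_N \circ h$ raises weight, hence is nilpotent (or one argues directly via the Jordan decomposition as in the lemma before \ref{perthmtpy}), and the perturbation lemma deforms the zero differential on $H^\bullet(U_0)$ to $ad_N$ and $\delta_S$ to $\delta_S + ad_N = V + ad_N$, while leaving $a$ unperturbed by the side conditions $h a = 0$. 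One also checks the perturbed homotopy data restricts to the relevant subcomplexes (the degree-$1$ terms cut out by the residue condition), exactly as in the last lemma of Section 4: $h$ lands in $H^0(U_0)$-type pieces, and $b$ respects the eigenvalue decomposition compatibly with the tangent spaces to the orbits $G_S \ast N_0$, using $d\chi(N) = N(0)$.

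Finally, when $S$ is large enough, to upgrade $i$ to an equivalence of derived stacks I would verify the two conditions: (a) $\pi_0(i)$ is an equivalence of the classical quotient groupoids $P_S \ltimes MC(\mathcal{Q}(A)) \to Aut(S) \ltimes MC(\mathcal{W}(A))$, and (b) $di_w$ is a quasi-isomorphism at every $w \in MC(\mathcal{Q}(A))$, not just at $(0,N)$. For (a), a point of $MC(\mathcal{W}(A))$ is $\gamma\beta + N\alpha_0$ with $V(N) + [\gamma, N] = 0$ and $N(0) \in G_S \ast N_0$; Proposition \ref{planecurveME1} (its essential-surjectivity and fully-faithfulness steps, which use ``$S$ large enough'') lets one gauge $\gamma$ into $H^1(U_0)$ by an element of $Aut(S)$, and one must check this gauge transformation can be taken in a way compatible with the $N\alpha_0$-component, i.e. that it lies in $P_S$ or that the orbit statement still holds — here the key is that the gauging elements $e^u$ with $V(u)=0$ lie in $P_S$ and act appropriately on $N$, so the Morita equivalence of Proposition \ref{planecurveME1} refines to one of groupoids over the MC loci; then essential surjectivity and full faithfulness follow. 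For (b), a general MC point of $[\mathcal{Q}(A)/P_S]$ has the form $(\gamma, N)$ with $\gamma \in H^1(U_0)$, and one runs the perturbation lemma with the combined nilpotent perturbation $ad_{\gamma\beta + N\alpha_0} = ad_\gamma + ad_N$: the operator $(ad_\gamma + ad_N)\circ h$ is still nilpotent — $ad_\gamma \circ h$ raises weight (as in Proposition \ref{planecurveME1} step (3)) and $ad_N \circ h$ is nilpotent and commutes appropriately — so the same argument shows $H^\bullet(U_0)$ with differential $ad_\gamma + ad_N$ includes quasi-isomorphically into $U_0$ with differential $\delta_S + ad_\gamma + ad_N$, and these are the tangent complexes (after restricting to the residue-constrained subcomplexes as before). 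The main obstacle I anticipate is (a): carefully checking that the normal-form/gauging procedure of Proposition \ref{planecurveME1} interacts correctly with the residue constraint $N(0) \in G_S \ast N_0$ — i.e. that one never needs a gauge transformation outside $P_S$ and that the induced map on MC loci is a genuine equivalence of groupoids rather than merely of sets — and in handling the higher-weight tails of the gauge transformations via Baker--Campbell--Hausdorff exactly as in step (2) of that proof.
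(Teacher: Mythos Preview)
Your overall strategy matches the paper's: build $i$ from the sub-dgla inclusion of Corollary \ref{quasiisocohomology} and Lemma \ref{describingparabolic}, use Proposition \ref{planecurveME1} for the groupoid equivalence when $S$ is large enough, and run the homological perturbation lemma for the tangent complexes. The construction of $i$, the check that $V(N)=0$ because $N\in H^0(U_0)$, and the tangent-complex argument (at $(0,N)$ and at general $(C,N)$) are all essentially what the paper does; the paper organises the perturbation slightly differently by first treating $d=\delta_S+ad_{C\beta}$ as the unperturbed differential (so the needed quasi-isomorphism is exactly step (3) of Proposition \ref{planecurveME1}) and then perturbing by $ad_{N\alpha_0}$, but your combined perturbation by $ad_\gamma+ad_N$ works for the same reasons.

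There is, however, a real gap in your treatment of (a), and your proposed fix is off-target. You write that the gauging elements $e^u$ used in Proposition \ref{planecurveME1} have $V(u)=0$ and hence lie in $P_S$; this is false---in step (1) of that proof the whole point is to choose $u$ with $V(u)\neq 0$ so as to cancel a piece of $\gamma$. The essential-surjectivity problem is not about keeping the gauge inside $P_S$ (you are allowed any $g\in Aut(S)$), nor about preserving the residue condition (conjugation by $g(0)\in G_S$ does that automatically). The actual issue is: after gauging so that $C'\in H^1(U_0)$, why is the new $N'=gNg^{-1}$ in $H^0(U_0)$ rather than merely in $U_0^0$? The paper's resolution is a one-line consequence of the tool you already cite: since $(H^\bullet(U_0),ad_{C'\beta})\hookrightarrow (U_0^\bullet,\delta_S+ad_{C'\beta})$ is a quasi-isomorphism (step (3) of Proposition \ref{planecurveME1}), in degree $0$ it identifies $\ker(ad_{C'}:H^0(U_0)\to H^1(U_0))$ with $\ker(\delta_S+ad_{C'}:U_0^0\to U_0^1)$; the Maurer--Cartan equation $\delta_S(N')+[C',N']=0$ places $N'$ in the latter kernel, hence $N'\in H^0(U_0)$. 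Once you have this, essential surjectivity follows immediately, and full faithfulness is just step (2) of Proposition \ref{planecurveME1} applied to the $U_0^1$-component. Replace your compatibility discussion with this degree-$0$ observation and your argument goes through.
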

\begin{proof}
To begin, assume that $S$ is large enough, so that $P_{S} \ltimes H^{1}(U_{0}) \subseteq Aut(S) \ltimes U^{1}_{0}$ is a Morita equivalence by Proposition \ref{planecurveME1}. Now let $C \beta + N \alpha_{0} \in MC(\mathcal{W}(A))$. Claim: If $C \in H^{1}(U_{0})$, then $(C,N) \in MC(\mathcal{Q}(A))$. Indeed, observe that $F(C \beta + N \alpha_{0}) = (\delta_{S}(N) + ad_{C \beta}(N)) \wedge \alpha_{0}$, and that 
\begin{equation} \label{quasiisotruncated}
(H^{\bullet}(U_{0}), ad_{C \beta}) \to (U_{0}^{\bullet}, \delta_{S} + ad_{C \beta})
\end{equation}
is a quasi-isomorphism. It follows that $N \in H^{0}(U_{0})$, and therefore that the claim is verified. It is straightforward to deduce from this claim that the induced morphism 
\[
P_{S} \ltimes MC(\mathcal{Q}(A)) \to Aut(A) \ltimes MC(\mathcal{W}(A))
\]
is an equivalence of categories. Now given a point $(C, N) \in MC(\mathcal{Q}(A))$, we need to show that the morphism of tangent complexes $di_{(C,N)} :  \mathbb{T}_{(C,N)} [\mathcal{Q}(A)/P_{S}]  \to  \mathbb{T}_{(C,N)}  [\mathcal{W}(A)/Aut(S)] $ is a quasi-isomorphism. The differentials on these tangent complexes have the form $d + ad_{N \alpha_{0}}$. By an argument involving the perturbation lemma, it suffices to prove that $di_{(C,N)}$ is a quasi-isomorphism with respect to the differentials $d$. But for these differentials, $di_{(C,N)}$ is a direct sum of Equation \ref{quasiisotruncated} and a shift of a subspace. Therefore, it is a quasi-isomorphism. Note that by Corollary \ref{quasiisocohomology}, Equation \ref{quasiisotruncated} is quasi-isomorphism when $C = 0$ even if $S$ is not large enough. 
\end{proof}
\begin{remark}
Connections of the form $(0,N) \in Q(A)$ are pullbacks by $f$ of connections on $\mathbb{C}$ with a logarithmic pole at the origin. 
\end{remark}

The condition in Theorem \ref{parabolicderivedstack} that $S$ is large enough is necessary. In the following Example we see that the moduli space can have extra components when the condition is not satisfied. 
\begin{example} \label{Extracomponent}
Let $f = x^2 - y^5$, let $\mathfrak{g} = \mathfrak{gl}_{3}$, and let $A = S$ be a diagonal matrix with entries $0, 1$ and $11$. Note that $S$ is not large enough since $1$ is a positive eigenvalue of $ad_{S}$ which is smaller than $w_{0} = 3$. The subalgebra $\mathfrak{g}_{S}$ consists of the diagonal matrices and 
\begin{align*}
U_{0}^{0} &= \mathfrak{g}_{S} \oplus \mathrm{span}_{\mathbb{C}}(x^2 E_{23}, y^5 E_{23}, xy^3 E_{13}), \\ 
U_{0}^{1} &= \mathrm{span}_{\mathbb{C}}(yE_{21}, y^2 E_{12}, xy^4 E_{23}, x^2 y^2 E_{13}, y^7 E_{13}),
\end{align*}
where $E_{ij}$ are the elementary matrices. On the other hand, the $\delta_{S}$ cohomology is given by 
\[
H^0(U_{0}) =  \mathfrak{g}_{S} \oplus \mathrm{span}_{\mathbb{C}}(fE_{23}), \ \ H^{1}(U_{0}) = \mathrm{span}_{\mathbb{C}}(yE_{21}, y^2 E_{12}, y^2 f E_{13}).
\]

An arbitrary element of $W(A)$ has the form $C \beta + N \alpha_{0}$, where
\begin{align*}
C &= C_{21}yE_{21} + C_{12}y^2 E_{12} + C_{23}xy^4E_{23} + (C_{13}^{(1)} x^2y^2 + C_{13}^{(2)}y^7)E_{13}, \\
N &= N_{13}xy^3 E_{13} + (N_{23}^{(1)} f + N_{23}^{(2)}(x^2 + y^5))E_{23}. 
\end{align*}
The Maurer-Cartan equation consists of the following coupled system of equations: 
\begin{align*}
20 N_{23}^{(2)} &= - C_{21}N_{13} \\ 
5 N_{13} &=- C_{12}(N_{23}^{(2)} - N_{23}^{(1)}) \\ 
6N_{13} &= -C_{12} (N_{23}^{(2)} + N_{23}^{(1)}). 
\end{align*}
Adding the last two equations and substituting the result into the first yields $(110 - C_{21}C_{12}) N_{23}^{(2)} = 0$. Assume first that $C_{21}C_{12} \neq 110$. Then the equations simplify to $N_{23}^{(2)} = N_{13} = C_{12}N_{23}^{(1)} = 0$, and $N \in H^0(U_{0})$. The result is a $5$-dimensional variety $M_{a}$ with $2$ irreducible components. Next, assume that $C_{21}C_{12} = 110$. Then we can solve the equations to obtain $N_{13} = -2 C_{12} N_{23}^{(1)}$ and $N_{23}^{(2)} = 11 N_{23}^{(1)}$. Hence, the result is a smooth irreducible $5$-dimensional variety $M_{b}$. 

An element of $Aut(S)$ has the form 
\[
g = \begin{pmatrix} u & 0 & \lambda xy^3 \\ 0 & v & ax^2 + b y^5 \\ 0 & 0 & w \end{pmatrix},
\]
and it acts on $C = (C_{21}, C_{12}, C_{23}, C_{13}^{(1)}, C_{13}^{(2)}) $ and $N = (N_{13}, N_{23}^{(1)}, N_{23}^{(2)})$ in the following way: 
\begin{align*}
g \ast C &= (\frac{v}{u}C_{21}, \frac{u}{v}C_{12}, \frac{v}{w}C_{23} - \frac{\lambda v}{uw}C_{21} - \frac{10 (a+ b)}{w}, \frac{u}{w}C_{13}^{(1)} - \frac{a u}{vw}C_{12} - \frac{6 \lambda }{w}, \frac{u}{w}C_{13}^{(2)} - \frac{b u}{vw}C_{12} - \frac{5 \lambda }{w}) \\
g \ast N &= (\frac{u}{w}N_{13}, \frac{v}{w}N_{23}^{(1)}, \frac{v}{w}N_{23}^{(2)}). 
\end{align*}
There are two things we can immediately note. First, by looking at the action on $N$, we can see that there are orbits in $M_{b}$ which do not intersect $Q(A)$. Hence, $[M_{b}/Aut(S)]$ is an extra component of the moduli space. Second, the product $C_{21}C_{12}$ is invariant under the action and if we assume that $C_{21}C_{12} \neq 110$, then it is always possible to send $C$ into $H^{1}(U_{0})$ (i.e. $C_{23} = 0$ and $C_{13}^{(1)} + C_{13}^{(2)} = 0$). The subgroup of $Aut(S)$ which stabilizes this locus is $P_{S}$, and hence $[M_{a}/Aut(S)]$ is contained in $[MC(\mathcal{Q}(A))/P_{S}]$. 
\end{example}

\subsection*{Relation to the Grothendieck-Springer resolution}
Recall that $P_{S} \subseteq C_{S}$ is a parabolic subgroup with Levi factor $G_{S}$. Denote their Lie algebras $\frak{p}_{S}$, $\frak{c}_{S}$ and $\frak{g}_{S}$, respectively. Let $\mathbb{P}$ denote the partial flag variety, defined as the set of parabolic subalgebras of $\frak{c}_{S}$ which are conjugate to $\frak{p}_{S}$. It is isomorphic to $C_{S}/P_{S}$. Consider the incidence variety
\[
\tilde{\frak{c}} = \{ (x, \mathfrak{p}) \in \frak{c}_{S} \times \mathbb{P} \ | \ x \in \mathfrak{p} \},
\]
which is isomorphic to $(C_{S} \times \frak{p}_{S})/P_{S}$ via the map which sends $(g, x) \in C_{S} \times \frak{p}_{S}$ to the pair $(Ad_{g}(x), Ad_{g}(\frak{p}_{S})) \in \tilde{\frak{c}} $. When $P_{S}$ is a Borel subgroup, $\tilde{\frak{c}}$ is the Grothendieck-Springer resolution (see e.g. \cite{MR2838836} for details). The element $N_{0} \in \mathfrak{g}_{S}$ defines an adjoint orbit $\mathcal{O}$ in the Levi quotient of every parabolic $\frak{p} \in \mathbb{P}$ \cite[Lemma 1]{boalch2011riemann}. This lets us define the following subspace of $\tilde{\frak{c}} $: 
\[
\tilde{\frak{c}}_{N_{0}} = \{ (x, \mathfrak{p}) \in \tilde{\frak{c}} \ | \ d\chi(x) \in \mathcal{O} \},
\]
where $d\chi$ denotes the projection to the Levi quotient. The space $Q(A)$ is a $P_{S}$-equivariant vector bundle over $d\chi^{-1}(G_{S} \ast N_{0})$. Hence $\pi: E_{A} = (C_{S} \times Q(A))/P_{S} \to \tilde{\frak{c}}_{N_{0}}$ is a $C_{S}$-equivariant vector bundle and the map $F_{S}$ gives rise to an equivariant section $\sigma_{S}$ of $\pi^{*}(E_{A}) \to E_{A}$. In this way, we obtain a derived stack $[\mathcal{E}_{A}/C_{S}]$ which is equivalent to $[\mathcal{Q}(A)/P_{S}]$. 

\begin{example} \label{flagvarietyexample}
Let $G = GL_{n}$ and let $A = S = pq D$, where $D$ is a diagonal matrix with distinct integer eigenvalues. In this case, $S$ is large enough, $C_{S} = G$ and $P_{S} = B$ is a Borel subgroup. Hence $\mathbb{P}$ is the flag variety $Fl_{n}$. Since $N_{0} = 0$, $\tilde{\frak{g}}_{0}$ is the Springer resolution, which is isomorphic to $T^* Fl_{n}$. We use Corollary \ref{quasiisocohomology} to compute the cohomology space $H^{1}(U_{0})$. The weights $w$ showing up in the decomposition have the form $w = (p-1)q + (q-1)p + cqp$. But $ (\mathbb{C}[f] \otimes C)_{w} = 0$ in this case, so that $H^{1}(U_{0}) = 0$. Hence, the moduli space $[\mathcal{W}(A)/Aut(S)]$ is equivalent to the quotient stack 
\[
[T^* Fl_{n}/GL_{n}]. 
\]
Note that the connections corresponding to the points of $T^* Fl_{n}$ are all pulled-back from logarithmic connections on $\mathbb{C}$, where a similar classification is given by \cite[Theorem A]{boalch2011riemann}. 
\end{example}

\begin{example} 
Let $G = GL_{n}$ and $A = S = \frac{qp}{r}D$, where $D$ is a diagonal integer matrix with distinct eigenvalues and $r$ is a positive integer. In this case, the eigenvalues of $e^{\frac{-2\pi i}{qp}S}$ are $r^{th}$ roots of unity, implying that $C_{S} = \prod_{i = 0}^{r-1} GL_{m_{i}}$, with the factors indexed by the roots of unity. Since the eigenvalues of $S$ are assumed to be distinct, $P_{S} = \prod_{i = 0}^{r-1} B_{m_{i}}$ is a product of Borels. Therefore, $\mathbb{P} \cong  \prod_{i = 0}^{r-1} Fl_{m_{i}}$ is a product of flag varieties and $\tilde{\mathfrak{c}}_{0} \cong \prod_{i = 0}^{r-1} T^*Fl_{m_{i}}$ is the product of their cotangent bundles. Writing the eigenvalues of $D$ as $rk + u$, with $0 \leq u < r$, the eigenvalues of $ad_{S}$ have the form $qp(k_{1} - k_{2}) + \frac{qp}{r}(u_{1}-u_{2})$. We can ensure that $S$ is large enough by restricting the possible values of $k_{i}$. For example, this is guaranteed if $k_{1} - k_{2} \neq 0, \pm 1$ for all pairs of eigenvalues. By Corollary \ref{quasiisocohomology}, the weights $w$ showing up in the decomposition of $H^{1}(U_{0})$ have the form 
\[
w = qp(1 + k_{2} - k_{1}) + \frac{qp}{r}(u_{2} - u_{1}) -p - q. 
\]
As in Example \ref{flagvarietyexample}, we must have $u_{2} \neq u_{1}$ in order to get a non-zero contribution. 

Now we specialise to $GL_{4}$, with $r = pq$ and $S = D$ a diagonal matrix with entries \[ pqk_{1}, \ \  pqk_{2}, \ \  pqk_{3} + p + q, \ \ pqk_{4} + p + q, \] such that $k_{1} \ll k_{2} \ll k_{3} \ll k_{4}$. Then $C_{S} = GL_{2} \times GL_{2}$ and $\mathbb{P} \cong \mathbb{P}^1 \times \mathbb{P}^1$. Let $T_{1}$ and $T_{2}$ be the tautological rank $2$ vector bundles over $\mathbb{P}^1 \times \mathbb{P}^1$ (corresponding respectively to the first and second factors). They are both trivial, but are equipped with tautological line subbundle bundles $L_{i}$ with respective degrees $(-1,0)$ and $(0,-1)$. The cotangent bundle $\tilde{\mathfrak{c}}_{0}$ can be identified with $Hom(T_{1}/L_{1}, L_{1}) \oplus Hom(T_{2}/L_{2}, L_{2})$, or alternatively, the nilpotent filtration-preserving endomorphisms of $T_{1}$ and $T_{2}$. 

The weights in the decomposition of $H^{1}(U_{0})$ have the form $qp(1 + k_{j} - k_{i})$, for $j = 3,4$ and $i = 1, 2$. Hence, $H^{1}(U_{0})$ can be identified with the subspace of $\mathfrak{gl}_{4}$ consisting of the upper right $2 \times 2$ block. Therefore, 
\[
E_{A} = Hom(T_{1}/L_{1}, L_{1}) \oplus Hom(T_{2}/L_{2}, L_{2}) \oplus Hom(T_{2}, T_{1}),
\]
with section $\sigma_{S}(a,b,c) = c \circ b - a \circ c$. 
\end{example}

\subsection*{Tangent Lie bialgebra and shifted Poisson geometry}
Let $A = S \in \mathfrak{g}$ be a semisimple element and consider the moduli space $[\mathcal{W}(S)/Aut(S)]$. There is a distinguished point $0 \in MC(\cal{W}(S))$ corresponding to the connection $1$-form $\alpha_{0}S$. In this section we focus our attention on a formal neighbourhood of this point in the moduli space and sketch the construction of a $-2$-shifted Poisson structure on this neighbourhood. According to the fundamental principal of derived deformation theory (see e.g. \cite{kontsevich2002deformation, MR4485797, MR2628795, MR2827833}) this formal neighbourhood is encoded by the shifted tangent complex $\mathbb{T}_{0}[\mathcal{W}(S)/Aut(S)][-1]$, equipped with its structure as a differential graded Lie algebra. By Theorem \ref{parabolicderivedstack} this dgla is quasi-isomorphic to 
\[
\mathbb{T}_{0}[\mathcal{Q}(A)/P_{S}][-1] = H^{0}(U_{0}) \to H^{1}(U_{0}) \oplus H^{0}(U_{0})_{+} \to H^{1}(U_{0}),
\]
where the differential is $0$ and $H^{0}(U_{0})_{+} = \bigoplus_{c > 0} f^{c} \mathfrak{g}_{-cpq}$. By \cite[Proposition 1.5]{pimenov2015shifted}, a $-3$-shifted Lie bialgebra structure on $\mathbb{T}_{0}[\mathcal{Q}(A)/P_{S}][-1]$ gives rise to a $-2$-shifted Poisson structure on the formal neighbourhood. Hence, our strategy is to construct a Lie bialgebra structure on the tangent complex by embedding it into $H^{\bullet}(U_{0}) \ltimes H^{\bullet}(U_{0})[-1]$, and then to realize this larger Lie algebra as a Lagrangian inside a $-3$-shifted Manin triple. 

We construct the Manin triple in stages, starting with the following input data: 
\begin{itemize}
\item Choose an invariant inner product $k$ on the reductive Lie algebra $\mathfrak{g}$. This induces a perfect pairing between the eigenspaces $\mathfrak{g}_{\lambda}$ and $\mathfrak{g}_{-\lambda}$. 
\item The $\mathbb{C}$-vector space $C = \mathbb{C}[x,y]/(x^{p-1}, y^{q-1})$ is canonically isomorphic to the degree $1$ cohomology of $f^{-1}(1)$, which is a curve of genus $g = \frac{1}{2}(p-1)(q-1)$ with a single puncture. The isomorphism is given as follows: 
\[
C \to H^{1}(f^{-1}(1)), \qquad x^a y^b \mapsto (x^a y^b \beta)|_{f^{-1}(1)}. 
\]
By pulling back the intersection pairing on the curve, we obtain a symplectic form $I$ on $C$. Up to a scaling constant, it is given by the following formula
\[
I(x^a y^b, x^{a'} y^{b'}) = \frac{\delta(a + a' + 2 - p) \delta(b + b' + 2 - q)}{aq + bp - w_{0}},
\]
where $\delta$ is the delta function which evaluates to $1$ at $0$ and $0$ otherwise. We can extend this to a $\mathbb{C}[f]$-linear pairing on $\mathbb{C}[f] \otimes_{\mathbb{C}} C$. 
\end{itemize}

Now consider the Lie algebra $\mathfrak{c} = \bigoplus_{c \in \mathbb{Z}} f^{c} \mathfrak{g}_{-cpq}$. This has two distinguished subalgebras: first $\mathfrak{b} = H^{0}(U_{0})$, where $c \geq 0$, and second $\mathfrak{b}^{-}$, where $c \leq 0$. Note that $\mathfrak{c}$ can be viewed as a subalgebra of $\mathfrak{g}$ and so inherits the pairing $k$. This defines a perfect pairing between $\mathfrak{b}$ and $\mathfrak{b}^{-}$.

Next, define the following vector space
\[
\mathfrak{K} = \bigoplus_{c \in \mathbb{Z}} \bigoplus_{\substack{0 \leq a \leq p-2 \\ 0 \leq b \leq q-2} } f^{c}x^{a}y^{b} \mathfrak{g}_{w_{0}-cpq - aq - bp}.
\]
This decomposes as $\mathfrak{K} = \mathfrak{n}_{+} \oplus \mathfrak{h} \oplus \mathfrak{n}_{-}$ according to whether $c$ is positive, zero, or negative, respectively. Note that $H^{1}(U_{0}) = \mathfrak{n}_{+} \oplus \mathfrak{h}$. Viewing both $\mathfrak{c}$ and $\mathfrak{K}$ as subspaces of the Lie algebra $\mathbb{C}[x,y]\otimes \mathfrak{g}$, we can show that $\mathfrak{K}$ is a representation of $\mathfrak{c}$. By combining the symplectic form $I$ on $C$ with the Lie bracket on $\mathfrak{g}$, we define a symmetric $\mathfrak{c}$-equivariant map $\omega: S^{2}(\mathfrak{K}) \to \mathfrak{c}$ as follows
\[
\omega(f^{c_{1}}x^{a_{1}} y^{b_{1}} X_{1}, f^{c_{2}}x^{a_{2}} y^{b_{2}} X_{2}) = f^{c_{1}+c_{2}} I(x^{a_{1}} y^{b_{1}}, x^{a_{2}}y^{b_{2}}) [X_{1}, X_{2}]. 
\]
Post-composing this with the natural projection to $\mathfrak{b}^{-}$ defines a map $\mu : S^{2}(\mathfrak{K}) \to \mathfrak{b}^{-}$. This is $\mathfrak{b}$-equivariant, where $\mathfrak{b}^{-}$ is a $\mathfrak{b}$-representation by using the inner product to identify it with $\mathfrak{b}^{*}$. With respect to the $\mathfrak{b}$ action on $\mathfrak{K}$, both $\mathfrak{n}_{+}$ and $\mathfrak{n}_{+} \oplus \mathfrak{h}$ are sub-representations. Hence the quotient $\mathfrak{h} = \frac{\mathfrak{n}_{+} \oplus \mathfrak{h}}{\mathfrak{n}_{+}}$ is naturally a $\mathfrak{b}$-representation, and $\mu$ descends to define a $\mathfrak{b}$-equivariant map $\nu : S^{2}(\mathfrak{h}) \to \mathfrak{b}^{-}$. We now define a graded Lie algebra 
\[
L = \mathfrak{b} \oplus (\mathfrak{K} \oplus \mathfrak{h})[-1] \oplus \mathfrak{b}^{-}[-2]. 
\]
The bracket is constructed from the bracket on $\mathfrak{b}$, the $\mathfrak{b}$-action on the other summands, the symmetric pairing $\mu$ on $\mathfrak{K}$ and the symmetric pairing $-\nu$ on $\mathfrak{h}$. We set the bracket between $\mathfrak{K}$ and $\mathfrak{h}$ to be zero. 
\begin{lemma}
The vector space $L$ with the bracket described above defines a graded Lie algebra. Let $\mathfrak{p}_{+} = \mathfrak{b} \oplus (\mathfrak{n}_{+} \oplus \mathfrak{h})[-1]$ and let $\mathfrak{p}_{-} = (\mathfrak{n}_{-} \oplus \mathfrak{h})[-1] \oplus \mathfrak{b}^{-}[-2]$. There are morphisms 
\begin{align*}
\mathfrak{p}_{+}  \to L, & \qquad (b,n,h) \mapsto (b, n + h, h, 0) \\
\mathfrak{p}_{-}  \to L, & \qquad (n,h, u) \mapsto (0, n + h, -h, u).
\end{align*}
These embed $\mathfrak{p}_{\pm}$ as complementary subalgebras of $L$. Furthermore, $\mathfrak{p}_{+}$ is isomorphic to $H^{\bullet}(U_{0})$. 
\end{lemma}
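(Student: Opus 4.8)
The plan is to split the statement into three parts: (i) the graded Jacobi identity for the bracket on $L$; (ii) the two displayed maps are injective Lie-algebra morphisms whose images are complementary subalgebras; (iii) the identification $\mathfrak{p}_{+}\cong H^{\bullet}(U_{0})$. The organising observation is that $L=\mathfrak{b}\oplus(\mathfrak{K}\oplus\mathfrak{h})[-1]\oplus\mathfrak{b}^{-}[-2]$ is concentrated in degrees $0,1,2$, so any iterated bracket whose value would sit in degree $\geq 3$ is automatically zero, which collapses most of the work. For (i), write $M=\mathfrak{K}\oplus\mathfrak{h}$ for the degree-$1$ part; the bracket consists of the Lie bracket of $\mathfrak{b}$, the $\mathfrak{b}$-actions on $M$ and on $\mathfrak{b}^{-}$, and the symmetric pairing $M\otimes M\to\mathfrak{b}^{-}$ equal to $\mu$ on $S^{2}(\mathfrak{K})$, to $-\nu$ on $S^{2}(\mathfrak{h})$, and to zero on $\mathfrak{K}\otimes\mathfrak{h}$. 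Graded antisymmetry is immediate, the symmetry of the pairing being the Koszul sign in degree $1$. Running through the Jacobiator by the degrees of its three inputs, every term in which a nested double bracket has degree $\geq 3$ drops out, and a short enumeration leaves only the input-degree patterns $(0,0,0)$, $(0,0,1)$, $(0,0,2)$ and $(0,1,1)$; these are respectively the Jacobi identity of $\mathfrak{b}$, the module axioms for $M$ and for $\mathfrak{b}^{-}$ over $\mathfrak{b}$, and the $\mathfrak{b}$-equivariance of the pairing $\mu\oplus(-\nu)$ — all already established, since $\mathfrak{K}$ is a representation of $\mathfrak{c}$ (hence of $\mathfrak{b}$) and $\mu,\nu$ are $\mathfrak{b}$-equivariant by construction.

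For (ii), both maps are visibly injective, so it is enough to show their images are closed under the $L$-bracket and together span $L$ with zero intersection; $\mathfrak{p}_{\pm}$ then inherit Lie structures from $L$. Writing an element of $M$ as $(\kappa,\eta)\in\mathfrak{K}\oplus\mathfrak{h}$ and $\mathrm{pr}_{\mathfrak{h}}$ for the projection onto the $f$-degree-zero summand, the image of $\mathfrak{p}_{+}$ in degrees $0,1$ is $\{(b,v,\mathrm{pr}_{\mathfrak{h}}v):b\in\mathfrak{b},\ v\in\mathfrak{n}_{+}\oplus\mathfrak{h}\}$ and that of $\mathfrak{p}_{-}$ in degrees $1,2$ is $\{(v,-\mathrm{pr}_{\mathfrak{h}}v,u):v\in\mathfrak{n}_{-}\oplus\mathfrak{h},\ u\in\mathfrak{b}^{-}\}$. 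Closure of $\mathrm{im}(\mathfrak{p}_{+})$ under the $\mathfrak{b}$-action uses that $\mathfrak{n}_{+}\oplus\mathfrak{h}$ is a $\mathfrak{b}$-subrepresentation of $\mathfrak{K}$ and that $\mathrm{pr}_{\mathfrak{h}}$ is the quotient map onto $\mathfrak{h}=(\mathfrak{n}_{+}\oplus\mathfrak{h})/\mathfrak{n}_{+}$, hence $\mathfrak{b}$-equivariant. The one substantive point is that the degree-$1$ bracket vanishes on $\mathrm{im}(\mathfrak{p}_{+})$: for $v_{1},v_{2}\in\mathfrak{n}_{+}\oplus\mathfrak{h}$ the element $\omega(v_{1},v_{2})$ has nonnegative $f$-degree, so $\mu(v_{1},v_{2})=\mathrm{pr}_{\mathfrak{b}^{-}}\omega(v_{1},v_{2})$ only records its $f$-degree-zero part, which comes exactly from the $\mathfrak{h}$-components and equals $\nu(\mathrm{pr}_{\mathfrak{h}}v_{1},\mathrm{pr}_{\mathfrak{h}}v_{2})$; thus $\mu(v_{1},v_{2})-\nu(\mathrm{pr}_{\mathfrak{h}}v_{1},\mathrm{pr}_{\mathfrak{h}}v_{2})=0$, compatibly with $\mathrm{im}(\mathfrak{p}_{+})$ having no degree-$2$ part. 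Closure of $\mathrm{im}(\mathfrak{p}_{-})$ is automatic, since its brackets land in degree $2$, where the image contains all of $\mathfrak{b}^{-}$.

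Complementarity is elementary linear algebra: in degree $0$ only $\mathrm{im}(\mathfrak{p}_{+})$ contributes and in degree $2$ only $\mathrm{im}(\mathfrak{p}_{-})$, while in degree $1$ one decomposes a general $(\kappa,\eta)\in\mathfrak{K}\oplus\mathfrak{h}$ by solving the decoupled system $h_{+}+h_{-}=\kappa_{\mathfrak{h}}$, $h_{+}-h_{-}=\eta$, which is invertible over $\mathbb{C}$, giving both uniqueness and existence of the decomposition. Finally, for (iii), with the restricted bracket $\mathrm{im}(\mathfrak{p}_{+})$ is $\mathfrak{b}$ in degree $0$, the graph of $\mathrm{pr}_{\mathfrak{h}}$ on $\mathfrak{n}_{+}\oplus\mathfrak{h}$ in degree $1$ carrying the restriction of the $\mathbb{C}[x,y]\otimes\mathfrak{g}$ adjoint action, and zero in degree $2$. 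By Corollary \ref{quasiisocohomology}, $H^{0}(U_{0})=\mathfrak{b}$ as Lie algebras and $H^{1}(U_{0})=\mathfrak{n}_{+}\oplus\mathfrak{h}$ as the corresponding $\mathfrak{b}$-module (the form $\beta$ contributing nothing to the bracket), and $H^{1}\times H^{1}\to H^{2}=0$; matching these data term by term yields the isomorphism $\mathfrak{p}_{+}\cong H^{\bullet}(U_{0})$.

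I expect the only genuine obstacle to be the vanishing of the degree-$1$ bracket on $\mathrm{im}(\mathfrak{p}_{+})$ — equivalently, that the restriction of $\mu$ to $\mathfrak{n}_{+}\oplus\mathfrak{h}$ factors through $\mathrm{pr}_{\mathfrak{h}}$ and agrees there with $\nu$ — since this is precisely what makes the two families of subalgebras complementary rather than merely linearly independent, and it is where the compatibility between the definition of $\nu$ (as the descent of $\mu$) and the $f$-degree grading is used. Everything else is degree bookkeeping or a direct appeal to the module and equivariance properties set up just before the lemma.
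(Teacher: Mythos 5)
Your proof is correct, and in fact the paper states this lemma without giving any proof, so your write-up supplies the missing verification in what is surely the intended way: the degree bookkeeping reduces the Jacobi identity to the module axioms and the equivariance of $\mu$ and $\nu$ already established before the lemma, and the one real computation is the vanishing of $\mu(v_{1},v_{2})-\nu(\mathrm{pr}_{\mathfrak{h}}v_{1},\mathrm{pr}_{\mathfrak{h}}v_{2})$ on $\mathfrak{n}_{+}\oplus\mathfrak{h}$, which you correctly isolate and prove by observing that for $c_{1},c_{2}\geq 0$ the $f$-degree-zero part of $\omega(v_{1},v_{2})$ is exactly $\omega(\mathrm{pr}_{\mathfrak{h}}v_{1},\mathrm{pr}_{\mathfrak{h}}v_{2})$. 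The complementarity via the invertible system $h_{+}+h_{-}=\kappa_{\mathfrak{h}}$, $h_{+}-h_{-}=\eta$ and the identification of $\mathfrak{p}_{+}$ with $H^{\bullet}(U_{0})$ (using that $\beta\wedge\beta=0$ kills the degree-one bracket there) are likewise correct.
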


Next we construct an inner product on $L$. By combining the inner product $k$ on $\mathfrak{g}$ with the symplectic pairing $I$ on $C$, we define a skew symmetric pairing $\Omega: \wedge^2 \mathfrak{K} \to \mathbb{C}$ as follows
\[
\Omega(f^{c_{1}}x^{a_{1}}y^{b_{1}}X_{1}, f^{c_{2}}x^{a_{2}}y^{b_{2}}X_{2}) = f^{c_{1} + c_{2}}I(x^{a_{1}}y^{b_{1}}, x^{a_{2}}y^{b_{2}}) k(X_{1}, X_{2}). 
\]
This pairing is non-degenerate, restricts to a non-degenerate pairing on $\mathfrak{h}$ and defines a perfect pairing between $\mathfrak{n}_{\pm}$. We can now define a non-degenerate graded symmetric bilinear pairing 
\[
B : S^2(L) \to \mathbb{C}[-2].
\]
More precisely, given $y = (b_{1}, k_{1}, h_{1}, u_{1}), z = (b_{2}, k_{2}, h_{2}, u_{2}) \in L$, we set 
\[
B(y,z) = k(b_{1}, u_{2}) + k(u_{1}, b_{2}) + \Omega(k_{1}, k_{2}) - \Omega(h_{1}, h_{2}). 
\]
\begin{lemma}
The pairing $B$ is a non-degenerate invariant inner product on $L$. Furthermore, $\mathfrak{p}_{\pm}$ are complementary Lagrangian subalgebras. In other words, $(L, \mathfrak{p}_{+}, \mathfrak{p}_{-})$ is a $-2$-shifted Manin triple. 
\end{lemma}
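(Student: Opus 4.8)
The plan is to establish the three defining properties of a Manin triple in turn: that $B$ is a non-degenerate graded symmetric bilinear form, that it is $\mathrm{ad}$-invariant, and that $\mathfrak{p}_\pm$ are complementary Lagrangian subalgebras. The subalgebra property of $\mathfrak{p}_\pm$, their complementarity, and the graded Lie algebra structure on $L$ have already been established in the previous lemma, so only the inner product properties and the isotropy of $\mathfrak{p}_\pm$ remain (half-dimensionality will then be automatic).

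Non-degeneracy and graded symmetry are formal. In the splitting $L = \mathfrak{b}\oplus(\mathfrak{K}\oplus\mathfrak{h})[-1]\oplus\mathfrak{b}^-[-2]$ the form $B$ has a block structure: it pairs $\mathfrak{b}$ with $\mathfrak{b}^-$ via $k$, and pairs the degree-$1$ summand $\mathfrak{K}\oplus\mathfrak{h}$ with itself via $\Omega$ on the $\mathfrak{K}$-factor and $-\Omega$ on the $\mathfrak{h}$-factor, all other blocks being zero. Each block is non-degenerate — $k$ is a perfect pairing between $\mathfrak{b}$ and $\mathfrak{b}^-$, and $\Omega$ is non-degenerate on $\mathfrak{K}$ and restricts to a non-degenerate form on $\mathfrak{h}$ — hence so is $B$. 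Since $B$ is a degree-$0$ map $S^2(L)\to\mathbb{C}[-2]$ it must pair degree $i$ against degree $2-i$ with Koszul sign $(-1)^{i(2-i)}$; this sign is $+1$ on the $\mathfrak{b}$-$\mathfrak{b}^-$ block, which is compatible with symmetry of $k$, and $-1$ on the degree-$1$ block, which is compatible with skew-symmetry of $\Omega$.

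The substantive step will be invariance, which I would verify by checking that $\mathrm{ad}_x$ is skew-adjoint for $B$ for every homogeneous $x\in L$; decomposing all three arguments into the summands $\mathfrak{b}$, $\mathfrak{K}$, $\mathfrak{h}$, $\mathfrak{b}^-$, only triples of total degree $2$ can contribute. The triples involving only $\mathfrak{b}$ and $\mathfrak{b}^-$ reduce to invariance of $k$ on $\mathfrak{c}$ together with the fact that the $\mathfrak{b}$-action on $\mathfrak{b}^-\cong\mathfrak{b}^*$ is coadjoint. The triples with one argument in $\mathfrak{b}$ and two in the degree-$1$ summand reduce to $\mathfrak{c}$-invariance of $\Omega$, that is $\Omega(b\cdot k_1,k_2)+\Omega(k_1,b\cdot k_2)=0$, together with the identity
\[
k\bigl(b,\,\omega(k_1,k_2)\bigr) \;=\; \Omega\bigl(b\cdot k_1,\,k_2\bigr), \qquad b\in\mathfrak{b},\ k_1,k_2\in\mathfrak{K},
\]
and its counterpart for $\nu$ on $\mathfrak{h}$; both hold because the $\mathfrak{c}$-action on $\mathfrak{K}$ leaves the monomial factor $x^ay^b$ untouched, so they follow from $\mathfrak{g}$-invariance of $k$ and the matching definitions of $\omega$, $\mu$, $\nu$ and $\Omega$. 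The minus sign in the bracket $-\nu$ on the $\mathfrak{h}$-summand is precisely what makes it compatible with the $-\Omega$ appearing in $B$ there, and passing to the quotient $\mathfrak{h}=(\mathfrak{n}_+\oplus\mathfrak{h})/\mathfrak{n}_+$ ensures that the $\mathfrak{b}$-action on it matches the fact that $\nu$ lands in the $c=0$ part of $\mathfrak{b}^-$. All remaining triples — those with a bracket against a $\mathfrak{b}^-$-factor, or with $\mathfrak{K}$ bracketed against $\mathfrak{h}$ — force both sides to vanish for degree reasons. I expect this case analysis, and in particular keeping the Koszul signs consistent between the symmetric pairings $\mu,\nu$ and the skew form $\Omega$, to be the main bookkeeping obstacle, though nothing in it is conceptually difficult.

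For the last assertion, $\mathfrak{p}_\pm$ are complementary subalgebras by the previous lemma, so it remains to check they are isotropic; half-dimensionality in each degree then follows from non-degeneracy of $B$. Isotropy is a direct computation: for two elements in the image of $\mathfrak{p}_+$, the only possibly nonzero contributions to $B$ are $\Omega$ on the $\mathfrak{K}$-components and $-\Omega$ on the $\mathfrak{h}$-components; since $\mathfrak{n}_+$ is isotropic for $\Omega$ and $\Omega$ vanishes between $\mathfrak{n}_+$ and the $c=0$ part, the $\mathfrak{K}$-contribution equals $\Omega$ of the $c=0$ components, which cancels exactly against the $-\Omega$-term coming from the diagonal embedding of $\mathfrak{h}$; the identical computation with $\mathfrak{n}_-$ and the sign $-h$ in the embedding of $\mathfrak{p}_-$ gives isotropy of $\mathfrak{p}_-$. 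Hence $(L,\mathfrak{p}_+,\mathfrak{p}_-)$ is a $-2$-shifted Manin triple.
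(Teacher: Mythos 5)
The paper states this lemma without proof (the whole subsection is explicitly a sketch), so there is no argument of the author's to compare against; your proposal fills the gap and, as far as I can check, does so correctly. You identify the right block structure of $B$, the right invariance identities (in particular $k(b,\omega(k_1,k_2))=\Omega(b\cdot k_1,k_2)$, which reduces to $\mathrm{ad}$-invariance of $k$ on $\mathfrak{g}$ because the $\mathfrak{c}$-action only touches the $\mathfrak{g}$-factor and multiplies by $f^{c}$), and the cancellation $\Omega(h_1,h_2)-\Omega(h_1,h_2)=0$ that makes the diagonal/antidiagonal embeddings of $\mathfrak{p}_\pm$ isotropic. Two small points worth making explicit: (i) the isotropy computation uses that $\Omega$ pairs $f^{c_1}(\cdot)$ against $f^{c_2}(\cdot)$ nontrivially only when $c_1+c_2=0$ (so that $\Omega(\mathfrak{n}_+,\mathfrak{n}_+)=\Omega(\mathfrak{n}_+,\mathfrak{h})=0$); this is implicit in the paper's assertion that $\Omega$ is a perfect pairing between $\mathfrak{n}_\pm$ and non-degenerate on $\mathfrak{h}$, but it is the normalization your cancellation depends on. (ii) The vanishing of the mixed invariance terms involving one argument in the $\mathfrak{K}$-summand and one in the extra $\mathfrak{h}$-summand is not purely a degree count — it is because $B$ has no cross-block between those two summands \emph{and} the bracket between them is declared zero; both facts are needed. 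Neither point is a gap, just bookkeeping you should spell out if you write this up in full.
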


Now let $\mathbb{C}[\epsilon]$ be the cdga generated by a degree $+1$ variable $\epsilon$ and let $tr: \mathbb{C}[\epsilon] \to \mathbb{C}[-1]$ be the trace map defined by sending the element $a + b \epsilon$ to $b$. Tensoring with $\mathbb{C}[\epsilon]$ defines a new triple of graded Lie algebras $(\mathbb{C}[\epsilon] \otimes L,  \mathbb{C}[\epsilon] \otimes \mathfrak{p}_{+}, \mathbb{C}[\epsilon]\otimes \mathfrak{p}_{-})$ and the bilinear form $B$ extends in the following way 
\[
B : S^2(\mathbb{C}[\epsilon] \otimes L) \to \mathbb{C}[-3], \qquad (fy, g z) \mapsto (-1)^{|y| |g|} tr(fg) B(y,z).
\] 
In this way, we obtain a $-3$-shifted Manin triple. Note that $\mathbb{C}[\epsilon] \otimes \mathfrak{p}_{+} \cong H^{\bullet}(U_{0}) \ltimes H^{\bullet}(U_{0})[-1]$. The tangent complex $\mathbb{T}_{0}[\mathcal{Q}(A)/P_{S}][-1]$ is isomorphic to the subalgebra $\mathfrak{b} \oplus (\mathfrak{n}_{+} \oplus \mathfrak{h}) \oplus \mathfrak{b}_{>0} \epsilon \oplus (\mathfrak{n}_{+} \oplus \mathfrak{h})\epsilon$, where $\mathfrak{b}_{>0} $ consists of the elements $f^{c} X$ with $c>0$. Let $M$ be the direct sum of this subalgebra with $\mathbb{C}[\epsilon]\otimes \mathfrak{p}_{-}$. Then $M$ is a coisotropic subalgebra of $L$ and $M^{\perp} = \mathfrak{g}_{0} \subset \mathfrak{b}^{-} \subset M$ is an isotropic ideal. It follows that $(M/M^{-1}, \mathbb{T}_{0}[\mathcal{Q}(A)/P_{S}][-1], \mathbb{C}[\epsilon]\otimes \mathfrak{p}_{-}/M^{\perp})$ is a $-3$-shifted Manin triple. Therefore, by \cite[Lemma 1.3]{pimenov2015shifted}, the tangent complex obtains a $-3$-shifted Lie bialgebra structure. 

\begin{theorem} \label{tangentLiebialgebra}
The tangent complex $\mathbb{T}_{0}[\mathcal{Q}(A)/P_{S}][-1]$ admits the structure of a $-3$-shifted Lie bialgebra. Therefore, the formal neighbourhood of $0 \in [\mathcal{W}(S)/Aut(S)]$ admits a $-2$-shifted Poisson structure. 
\end{theorem}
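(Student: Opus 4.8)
The proof assembles the structures constructed in the preceding paragraphs, so I would organise it in three steps.

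First I would reduce to $[\mathcal{Q}(A)/P_{S}]$. With $A = S$ semisimple we have $N_{0} = 0$, and the relevant point is $(0,0) \in Q(S)$; by Corollary \ref{quasiisocohomology} the quasi-isomorphism of Theorem \ref{parabolicderivedstack} holds at $C = 0$ regardless of whether $S$ is large enough, so $\mathbb{T}_{0}[\mathcal{W}(S)/Aut(S)][-1]$ is quasi-isomorphic as a dgla to $\mathbb{T}_{0}[\mathcal{Q}(S)/P_{S}][-1]$. A dgla quasi-isomorphism induces an equivalence of the associated formal moduli problems, so it is enough to produce the $-2$-shifted Poisson structure on the formal neighbourhood controlled by $\mathbb{T}_{0}[\mathcal{Q}(A)/P_{S}][-1]$ and then transport it along this quasi-isomorphism.

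Second I would realise $\mathbb{T}_{0}[\mathcal{Q}(A)/P_{S}][-1]$ as a Lagrangian in a $-3$-shifted Manin triple, so that \cite[Lemma 1.3]{pimenov2015shifted} equips it with a $-3$-shifted Lie bialgebra structure. Starting from the $-3$-shifted Manin triple $(\mathbb{C}[\epsilon]\otimes L,\ \mathbb{C}[\epsilon]\otimes\mathfrak{p}_{+},\ \mathbb{C}[\epsilon]\otimes\mathfrak{p}_{-})$ built above, with $B$ extended via the $\epsilon$-trace, take $M \subset \mathbb{C}[\epsilon]\otimes L$ to be the sum of the subalgebra $\mathfrak{b}\oplus(\mathfrak{n}_{+}\oplus\mathfrak{h})\oplus\mathfrak{b}_{>0}\epsilon\oplus(\mathfrak{n}_{+}\oplus\mathfrak{h})\epsilon \cong \mathbb{T}_{0}[\mathcal{Q}(A)/P_{S}][-1]$ with $\mathbb{C}[\epsilon]\otimes\mathfrak{p}_{-}$. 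Reading off the explicit formula for $B$, one checks that $M^{\perp}$ is exactly the copy of $\mathfrak{g}_{0}$ inside $\mathfrak{b}^{-}\subset M$, that $M$ is a coisotropic subalgebra, and that $\mathfrak{g}_{0}$ is an isotropic ideal of $M$; symplectic reduction then yields a $-3$-shifted Manin triple $(M/\mathfrak{g}_{0},\ \mathbb{T}_{0}[\mathcal{Q}(A)/P_{S}][-1],\ (\mathbb{C}[\epsilon]\otimes\mathfrak{p}_{-})/\mathfrak{g}_{0})$, and \cite[Lemma 1.3]{pimenov2015shifted} gives the first assertion of the theorem.

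Finally, \cite[Proposition 1.5]{pimenov2015shifted} turns this $-3$-shifted Lie bialgebra structure on the shifted tangent complex of the formal neighbourhood of $0 \in [\mathcal{Q}(A)/P_{S}]$ into a $-2$-shifted Poisson structure there, and transporting along the quasi-isomorphism of the first step gives the $-2$-shifted Poisson structure on the formal neighbourhood of $0 \in [\mathcal{W}(S)/Aut(S)]$. The main obstacle is the reduction step of the second paragraph: one must verify by a direct computation with the pairings ($k$ on $\mathfrak{g}$, the symplectic form $\Omega$ on $\mathfrak{K}$, and the $\epsilon$-trace) that $M^{\perp}$ is precisely $\mathfrak{g}_{0}$, so that $M$ is coisotropic, and that this $\mathfrak{g}_{0}$ is closed under bracketing with all of $M$; the Jacobi identity for the bracket on $L$ and the invariance and non-degeneracy of $B$ are recorded in the lemmas above and may simply be cited.
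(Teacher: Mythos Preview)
Your proposal is correct and follows essentially the same route as the paper: the reduction to $[\mathcal{Q}(A)/P_{S}]$ via Theorem \ref{parabolicderivedstack} at $C=0$, the coisotropic reduction of the $-3$-shifted Manin triple $\mathbb{C}[\epsilon]\otimes L$ by the subalgebra $M$ with $M^{\perp}=\mathfrak{g}_{0}$, and the appeal to \cite[Lemma 1.3 and Proposition 1.5]{pimenov2015shifted} are exactly what the paper does in the paragraphs preceding the theorem. You have also correctly isolated the one computation that carries the weight, namely identifying $M^{\perp}$ with $\mathfrak{g}_{0}$ and checking it is an ideal of $M$.
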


\bibliographystyle{plain}

 \bibliography{bibliography.bib}

\end{document}